\documentclass[reqno,a4paper]{amsart}

\usepackage{amsthm}
\usepackage{amsmath}
\usepackage[all]{xy} \SelectTips{eu}{}
\usepackage{latexsym,amsfonts,amssymb}
\usepackage{hyperref}
\hypersetup{colorlinks=true, linkcolor=teal}
\usepackage{cite}
\hypersetup{citecolor=brown}
\usepackage{mathptmx}
\usepackage{nicefrac}
\usepackage{array}
\usepackage[dvipsnames]{xcolor}
\usepackage{mathrsfs}
\usepackage{rotating}
\usepackage{bbm}
\usepackage{tikz-cd}

\renewcommand{\theequation}{$\smash{\sharp}\mspace{0.5mu}$\arabic{equation}}

\usepackage{vmargin}
\setpapersize{A4}
\setmargrb{25mm}{20mm}{25mm}{20mm}

\newcommand{\numberseries}{\mdseries}   %Fontseries used for numbering theorem

\newlength{\thmtopspace}                %Space above theorem
\newlength{\thmbotspace}                %Space below theorem
\newlength{\thmheadspace}               %Space between theorem caption and text
\newlength{\thmindent}                  %For indenting

\newtheoremstyle{bfupright head,slanted body}
                {\thmtopspace}{\thmbotspace}
                {\slshape}{\thmindent}{\bfseries}{.}{\thmheadspace}
                {{\numberseries \thmnumber{{\bf #2} }}\thmnote{#3}}

\newtheoremstyle{bfupright head,upright body}
                {\thmtopspace}{\thmbotspace}
                {\upshape}{\thmindent}{\bfseries}{.}{\thmheadspace}
                {{\numberseries \thmnumber{{\bf #2} }}\thmnote{#3}}

\newtheoremstyle{bfit head,upright body}
                {\thmtopspace}{\thmbotspace}
                {\upshape}{\thmindent}{\upshape}{.}{\thmheadspace}
                {{\numberseries\thmnumber{{\bf #2} }}
                {\bfseries\itshape\thmnote{\negthickspace#3}}}

\newtheoremstyle{it head,upright body}
                {\thmtopspace}{\thmbotspace}
                {\upshape}{\thmindent}{\upshape}{.}{\thmheadspace}
                {{\numberseries\thmnumber{{\bf #2} }}
                {\itshape\thmnote{\negthickspace#3}}}

\newtheoremstyle{fixed bf head,slanted body}
                {\thmtopspace}{\thmbotspace}{\slshape}
                {\thmindent}{\bfseries}{\!.}{\thmheadspace}
                {{\thmname{#1}\numberseries\thmnumber{ {\bf #2}}}\thmnote{ (#3)} }

\newtheoremstyle{fixed bf head,upright body}
                {\thmtopspace}{\thmbotspace}{\upshape}
                {\thmindent}{\bfseries}{\!.}{\thmheadspace}
                {{\thmname{#1}\numberseries\thmnumber{ {\bf #2}}}\thmnote{ (#3)} }

\newtheoremstyle{independent paragraph}
                {\thmtopspace}{\thmbotspace}
                {\upshape}{\thmindent}{\upshape}{}{0pt}
                {\thmnote{#3 }}

\newtheoremstyle{subparagraph}
                {\thmbotspace}{\thmbotspace}
                {\upshape}{\thmindent}{\upshape}{}{0pt}
                {\thmnote{#3 }}

\newtheoremstyle{notes}
                {\thmtopspace}{\thmbotspace}
                {\ttfamily}{\thmindent}{\ttfamily\small }{}{0pt}
                {\thmnote{#3 }}
                
\newtheoremstyle{numbered paragraph}
                {\thmtopspace}{\thmbotspace}{\upshape}
                {\thmindent}{\upshape}{}{\thmheadspace}
                {{\numberseries \thmnumber{\bf #2.}}}

\theoremstyle{bfupright head,slanted body}
\newtheorem{res}{}[section]             \newtheorem*{res*}{}

\theoremstyle{bfit head,upright body}
                 \newtheorem*{com*}{}

\theoremstyle{bfupright head,upright body}
               \newtheorem*{bfhpg*}{}

\theoremstyle{it head,upright body}
               \newtheorem*{ithpg*}{}

\theoremstyle{fixed bf head,slanted body}
\newtheorem{thm}[res]{Theorem}          \newtheorem*{thm*}{Theorem}
\newtheorem{prp}[res]{Proposition}      \newtheorem*{prp*}{Proposition}
\newtheorem{cor}[res]{Corollary}        \newtheorem*{cor*}{Corollary}
\newtheorem{lem}[res]{Lemma}            \newtheorem*{lem*}{Lemma}

\theoremstyle{fixed bf head,upright body}
\newtheorem{dfn}[res]{Definition}       \newtheorem*{dfn*}{Definition}
      \newtheorem*{obs*}{Observation}
\newtheorem{rmk}[res]{Remark}           \newtheorem*{rmk*}{Remark}
\newtheorem{exa}[res]{Example}          \newtheorem*{exa*}{Example}
         \newtheorem*{exe*}{Exercise}
            \newtheorem{stp*}{Setup}
         \newtheorem{ntn*}{Notation}
     \newtheorem{con*}{Construction}

      \newtheorem{telconj*}{Conjecture}

\theoremstyle{numbered paragraph}

\theoremstyle{subparagraph}

\theoremstyle{notes}

\newlength{\thmlistleft}        %leftmargin
\newlength{\thmlistright}       %rightmargin
\newlength{\thmlistpartopsep}   %partopsep
\newlength{\thmlisttopsep}      %topsep
\newlength{\thmlistparsep}      %parsep
\newlength{\thmlistitemsep}     %itemsep

\newcounter{eqc} 
\newenvironment{eqc}{\begin{list}{\upshape (\textit{\roman{eqc}})}%
    {\usecounter{eqc}%
      \setlength{\leftmargin}{\thmlistleft}%
      \setlength{\labelwidth}{\thmlistleft}%
      \setlength{\rightmargin}{\thmlistright}%
      \setlength{\partopsep}{\thmlistpartopsep}%
      \setlength{\topsep}{\thmlisttopsep}%
      \setlength{\parsep}{\thmlistparsep}%
      \setlength{\itemsep}{\thmlistitemsep}}}%
  {\end{list}}%

\newcommand{\eqclbl}[1]{{\upshape(\textit{#1})}}

\newcounter{prt}
\newenvironment{prt}{\begin{list}{\upshape (\alph{prt})}%
    {\usecounter{prt}%
      \setlength{\leftmargin}{\thmlistleft}%
      \setlength{\labelwidth}{\thmlistleft}%
      \setlength{\rightmargin}{\thmlistright}%
      \setlength{\partopsep}{\thmlistpartopsep}%
      \setlength{\topsep}{\thmlisttopsep}%
      \setlength{\parsep}{\thmlistparsep}%
      \setlength{\itemsep}{\thmlistitemsep}}}%
  {\end{list}}%

\newcommand{\prtlbl}[1]{{\upshape(#1)}}

\newcounter{rqm}
\newenvironment{rqm}{\begin{list}{\upshape (\arabic{rqm})}%
    {\usecounter{rqm}%
      \setlength{\leftmargin}{\thmlistleft}%
      \setlength{\labelwidth}{\thmlistleft}%
      \setlength{\rightmargin}{\thmlistright}%
      \setlength{\partopsep}{\thmlistpartopsep}%
      \setlength{\topsep}{\thmlisttopsep}%
      \setlength{\parsep}{\thmlistparsep}%
      \setlength{\itemsep}{\thmlistitemsep}}}%
  {\end{list}}%

\newcommand{\rqmlbl}[1]{{\upshape(#1)}}

\newcounter{rqmm}
  {\end{list}}%

  {\end{list}}%

  {\end{list}}%

\newcommand{\pgref}[1]{\ref{#1}}

\renewcommand{\eqref}[1]{(\pgref{eq:#1})}

\newcommand{\corref}[2][Corollary ]{#1\pgref{cor:#2}}
\newcommand{\dfnref}[2][Definition~]{#1\pgref{dfn:#2}}

\newcommand{\lemref}[2][Lemma ]{#1\pgref{lem:#2}}

\newcommand{\prpref}[2][Proposition ]{#1\pgref{prp:#2}}

\newcommand{\thmref}[2][Theorem ]{#1\pgref{thm:#2}}

\makeatletter
\def\@nobreak@#1{\mathchoice%
  {\nobreakdef@\displaystyle\f@size{#1}}%
  {\nobreakdef@\nobreakstyle\tf@size{\firstchoice@false #1}}%
  {\nobreakdef@\nobreakstyle\sf@size{\firstchoice@false #1}}%
  {\nobreakdef@\nobreakstyle\ssf@size{\firstchoice@false #1}}%
  \check@mathfonts}%
\def\nobreakdef@#1#2#3{\hbox{{%
                    \everymath{#1}%
                    \let\f@size#2\selectfont%
                    #3}}}%
\makeatother

\DeclareFontFamily{T1}{cmex}{}
\DeclareFontShape{T1}{cmex}{m}{n}{<-> s * [0.89] cmex10}{}
\DeclareSymbolFont{cmlargesymbols}{T1}{cmex}{m}{n}
\DeclareMathSymbol{\mycoprod}{\mathop}{cmlargesymbols}{"60} 
\DeclareMathSymbol{\myprod}{\mathop}{cmlargesymbols}{"51} \let\prod\myprod

\DeclareSymbolFont{usualmathcal}{OMS}{cmsy}{m}{n}
\DeclareSymbolFontAlphabet{\mathcal}{usualmathcal}

\DeclareSymbolFont{letters}{OML}{txmi}{m}{it}

\DeclareMathSymbol{\alpha}{\mathord}{letters}{"0B}
\DeclareMathSymbol{\beta}{\mathord}{letters}{"0C}
\DeclareMathSymbol{\gamma}{\mathord}{letters}{"0D}
\DeclareMathSymbol{\sigma}{\mathord}{letters}{"0E}
\DeclareMathSymbol{\epsilon}{\mathord}{letters}{"0F}
\DeclareMathSymbol{\zeta}{\mathord}{letters}{"10}
\DeclareMathSymbol{\eta}{\mathord}{letters}{"11}
\DeclareMathSymbol{\theta}{\mathord}{letters}{"12}
\DeclareMathSymbol{\iota}{\mathord}{letters}{"13}
\DeclareMathSymbol{\kappa}{\mathord}{letters}{"14}
\DeclareMathSymbol{\lambda}{\mathord}{letters}{"15}
\DeclareMathSymbol{\mu}{\mathord}{letters}{"16}
\DeclareMathSymbol{\nu}{\mathord}{letters}{"17}
\DeclareMathSymbol{\xi}{\mathord}{letters}{"18}
\DeclareMathSymbol{\pi}{\mathord}{letters}{"19}
\DeclareMathSymbol{\rho}{\mathord}{letters}{"1A}
\DeclareMathSymbol{\sigma}{\mathord}{letters}{"1B}
\DeclareMathSymbol{\tau}{\mathord}{letters}{"1C}
\DeclareMathSymbol{\upsilon}{\mathord}{letters}{"1D}
\DeclareMathSymbol{\phi}{\mathord}{letters}{"1E}
\DeclareMathSymbol{\chi}{\mathord}{letters}{"1F}
\DeclareMathSymbol{\psi}{\mathord}{letters}{"20}
\DeclareMathSymbol{\omega}{\mathord}{letters}{"21}
\DeclareMathSymbol{\varepsilon}{\mathord}{letters}{"22}
\DeclareMathSymbol{\vartheta}{\mathord}{letters}{"23}
\DeclareMathSymbol{\varpi}{\mathord}{letters}{"24}
\DeclareMathSymbol{\varrho}{\mathord}{letters}{"25}
\DeclareMathSymbol{\varsigma}{\mathord}{letters}{"26}
\DeclareMathSymbol{\varphi}{\mathord}{letters}{"27}

\DeclareMathSymbol{\Theta}{\mathord}{letters}{"02}
\DeclareMathSymbol{\Xi}{\mathord}{letters}{"04}
\DeclareMathSymbol{\Pi}{\mathord}{letters}{"05}
\DeclareMathSymbol{\Upsilon}{\mathord}{letters}{"07}
\DeclareMathSymbol{\Phi}{\mathord}{letters}{"08}
\DeclareMathSymbol{\Psi}{\mathord}{letters}{"09}
\DeclareMathSymbol{\upGamma}{\mathalpha}{operators}{"00}
\DeclareMathSymbol{\upDelta}{\mathalpha}{operators}{"01}
\DeclareMathSymbol{\upTheta}{\mathalpha}{operators}{"02}
\DeclareMathSymbol{\upLambda}{\mathalpha}{operators}{"03}
\DeclareMathSymbol{\upXi}{\mathalpha}{operators}{"04}
\DeclareMathSymbol{\upPi}{\mathalpha}{operators}{"05}
\DeclareMathSymbol{\upSigma}{\mathalpha}{operators}{"06}
\DeclareMathSymbol{\upUpsilon}{\mathalpha}{operators}{"07}
\DeclareMathSymbol{\upPhi}{\mathalpha}{operators}{"08}
\DeclareMathSymbol{\upPsi}{\mathalpha}{operators}{"09}
\DeclareMathSymbol{\upOmega}{\mathalpha}{operators}{"0A}

\DeclareMathAlphabet\PazoBB{U}{fplmbb}{m}{n}%

\newcommand{\Hom}[3]{\operatorname{Hom}_{#1}(#2,#3)}
\newcommand{\Ext}[4]{\operatorname{Ext}_{#1}^{#2}(#3,#4)}

\renewcommand{\Im}[1]{\operatorname{Im}\mspace{1mu}#1}
\newcommand{\Ker}[1]{\operatorname{Ker}\mspace{1mu}#1}

\newcommand{\lMod}[1]{{}_{#1}\mspace{-1mu}\operatorname{Mod}}

\newcommand{\lPrj}[1]{{}_{#1}\mspace{-1mu}\operatorname{Prj}}
\newcommand{\lInj}[1]{{}_{#1}\mspace{-1mu}\operatorname{Inj}}

\newcommand{\QSD}[2]{\mathbf{D}_{#1}(#2)}

\newcommand{\alg}{A}

\newcommand{\Cq}[1]{C_{\mspace{-1mu}\smash{#1}}}
\newcommand{\Sq}[1]{S_{\mspace{-4mu}\smash{#1}}}
\newcommand{\Kq}[1]{K_{\smash{#1}}}
\newcommand{\Fq}[1]{F_{\mspace{-5mu}\smash{#1}}}

\newcommand{\cH}[2][1]{\mathbb{H}^{#1}_{\textnormal{\tiny[}#2\textnormal{\tiny]}}}

\newcommand{\lE}[1]{{}_{#1}\mathscr{E}}

\newcommand{\supp}[1]{\operatorname{supp}\mspace{1mu}#1}

\DeclareMathOperator{\Inj}{Inj}
\newcommand{\Prj}[1]{\operatorname{Prj}\mspace{2mu}#1}

\DeclareMathOperator{\susp}{\operatorname{\Sigma}}
\DeclareMathOperator{\syz}{\operatorname{\Omega}}
\DeclareMathOperator{\stmod}{\underline{mod}}
\DeclareMathOperator{\StMod}{\underline{Mod}}
\DeclareMathOperator{\fpmod}{mod}
\DeclareMathOperator{\Mod}{Mod}

\DeclareMathOperator{\htpy}{Ho}

\begin{document}

\title{Silting $\text{t}$-structures in $Q$-shaped derived categories}

\author{Anastasios Slaftsos}

\address{Anastasios Slaftsos, Dipartimento di Matematica ``Tullio Levi-Civita'', Universit\'a degli Studi di Padova, via Trieste 63, 35131 Padova, Italy}
\email{slaftsos@math.unipd.it}

\keywords{Homotopy categories; quiver representations; silting theory; triangulated categories; t-structures.}

\subjclass{16G20, 16E35, 18G80, 18N40}

%16G20 Representations of quivers and partially ordered sets
%16E35 Derived categories and associative algebras
%18G25 Relative homological algebra, projective classes 
%18G80 Derived categories, triangulated categories
%18N40 Homotopical algebra, Quillen model categories, derivators

\thanks{\textbf{Acknowledgements:} The author would like to thank Henrik Holm and Jorge Vit\'oria for helpful discussions on this work. 
The author was supported by NextGenerationEU, Mission 4 Component 1 Investment 4.1~, CUP C96E23000600001. }
  
\begin{abstract}
    Torsion pairs, and in particular t-structures, play a central role in the study of triangulated categories. Specifically, t-structures induced by silting (or tilting) objects often admit desirable properties with strong connections to derived equivalences. In this paper, using the correspondence of Saor\'{i}n-{\v{S}}{\v{t}}ov\'{i}{\v{c}}ek between cohereditary cotorsion pairs in Frobenius exact categories and t-structures in their stable categories, we construct a family of t-structures in the $Q$-shaped derived category of Holm and J\o rgensen, arising from admissible partitions of $Q$. We give an explicit description of the associated cotorsion pairs inside the Frobenius exact category of the bifibrant objects, and we identify the corresponding co-aisles by certain homological vanishing conditions. Such t-structures are proved to be induced by a silting object, that can be completely determined by the combinatorics of $Q$.  Finally, we illustrate our results by recovering well-known equivalences in the $Q$-shaped setting, while also providing examples where the combinatorial conditions fail (e.g. cyclic quivers), showing that such categories may admit no non-trivial t-structures, revealing phenomena analogous to those observed by Linckelmann in stable module categories.
\end{abstract}

\maketitle

\section{Introduction}

Holm and J\o rgensen in \cite{HJ-JLMS} introduced $Q$-shaped derived categories as a generalisation of derived categories, effectively providing a homological algebra setup in which (co)chain complexes are replaced by $Q$-shaped analogues for a suitable small $k$-preadditive category $Q$, satisfying certain conditions (see \cite{MR4786505} for a brief survey).  Such categories are compactly generated (cf. \cite{HJ-TAMS}) algebraic triangulated categories and in particular they can be realised as stable categories of the Frobenius exact category of bifibrant objects.\smallbreak

For certain examples of $Q$, it is well-known that the $Q$-shaped derived category is equivalent to the (usual) derived category of some ring. Such instances of triangle equivalences can be found in \cite{MR3742439} for the derived category of $N$-complexes, in \cite{saito2023tilting} for $m$-periodic derived categories and more generally, in \cite{gratz2024tilting} for a class of categories $Q$, arising from graded finite dimensional self-injective algebras. These results were later generalised in the framework of dg-categories by Jasso in \cite{jasso2025qshaped}.  Consequently, a natural question is whether every $Q$-shaped derived category is equivalent to a derived category of some ring. In this paper we provide a negative answer to this question.\smallbreak

Torsion pairs in abelian and triangulated categories, and in particular t-structures \cite{BeBeDe}, play a central role in the study of triangulated categories. Silting and cosilting objects generalise tilting theory and admit intrinsic formulations in terms of t-structures in abstract triangulated settings \cite{psaroudakis2018realisation, nicolas2019silting}. Hearts induced by silting t-structures often enjoy strong categorical properties and provide a bridge between triangulated and abelian methods. In many situations, these hearts retain enough information to reconstruct or compare triangulated structures (for example when the object is tilting). This highlights their importance in representation theory, where silting theory controls mutations \cite{aihara_iyama_silting,angeleri2025mutation}, derived equivalences \cite{Happel_morita_theory_fda,JRi89b,Keller_Deriving_DG_categories}, and others. \smallbreak

In this paper, we study t-structures in $Q$-shaped derived categories. In particular, we construct a family of (cohereditary and complete) cotorsion pairs in the category of $\lMod{A}$-valued representations of $Q$, that descend to t-structures in the $Q$-shaped derived category, using Saor{\'{\i}}n-{\v{S}}t'ov{\'\i}{\v{c}}ek's correspondence theorem \cite{SaorinStovicek}. For what follows, denote by ${}^{\perp_1}\mathscr{E}$ the Frobenius exact category consisting of cofibrant objects, whose stable category is equivalent to the $Q$-shaped derived category $\QSD{Q}{A}$ of a ring $A$. The following result gives an explicit description of such t-structures in the stable category ${}^{\perp_1}\underline{\mathscr{E}}$ which is equivalent to the $Q$-shaped derived category of $A$.

\begin{res*}[Theorem~A]
  Let $(\mathcal{L},\mathcal{R})$ be a ``suitably nice'' partition of $Q$. Consider the cotorsion pair $\,\mathfrak{C}=(\mathcal{X}_\mathcal{L},\mathcal{Y}_\mathcal{L})$ generated by the set $\{S_q(A)\,|\, q\in \mathcal{L}\}$ of stalk objects supported in $\mathcal{L}$,  in the Frobenius exact category ${}^{\perp_1}\mathscr{E}$. The image $(\mathfrak{X}_\mathcal{L},\mathfrak{Y}_\mathcal{L})$ of the cotorsion pair in the $Q$-shaped derived category $\QSD{Q}{A}$ is given by
\begin{align*}
  \phantom{XXX}
  \mathfrak{X}_\mathcal{L}
  &\,=\,
    \left\{ X \in \QSD{Q}{A} \;\left|\mspace{-5mu} 
    \begin{array}{l}
    \text{There exists an object $P$ with $\supp{P} \subseteq \mathcal{L}$ and $P(q) \in \lPrj{A}$} \\
    \text{for every $q \in \mathcal{L}$ and an isomorphism $P \cong X$ in $\QSD{Q}{A}$}
    \end{array}
    \mspace{-8mu}
    \right.
    \right\} ,
  \\  
  \mathfrak{Y}_\mathcal{L}
  &\,=\,
  \{ Y \in \QSD{Q}{A} \ | \ \cH[1]{q}(Y)=0 \text{ for every } q \in \mathcal{L} \} \;.  
\end{align*}
The cotorsion pair $\,\mathfrak{C}$ corresponds to a t-structure $\,\mathbf{t}\,=\, (\Omega\mathfrak{X}_\mathcal{L},\mathfrak{Y}_\mathcal{L})$ in $\QSD{Q}{A}$.
\end{res*}

Such t-structures can be proved to be induced by a \emph{silting object}. The partition $(\mathcal{L},\mathcal{R})$ of $Q$ allows one to define the boundary $\partial\mathcal{L}$ of $\mathcal{L}$ as the set of all vertices in $\mathcal{L}$ that map non-trivially into $\mathcal{R}$. 
Representations over a field $\Bbbk$ of the sub-quiver $\partial\mathcal{L}$ can be identified as modules over a finite dimensional algebra $\Lambda$, which we call the \emph{boundary algebra} of $Q$. In particular, one can show that $\lMod{\Lambda}$ is homologically embedded in $\QSD{Q}{\Bbbk}$, see Lemma \ref{lem: homological embedding}.  

\begin{res*}[Theorem~B]
   The functor $\psi\colon\lMod{\Lambda}\longrightarrow\QSD{Q}{\Bbbk}$ maps tilting $\Lambda$-modules to silting objects in $\QSD{Q}{\Bbbk}$. Moreover, the t-structures induced by the images of projective generators of $\Lambda$ coincide with $ (\mathfrak{X}_\mathcal{L},\Sigma\,\mathfrak{Y}_\mathcal{L})$ in $\QSD{Q}{\Bbbk}$.
\end{res*}

This result can be extended to the $Q$-shaped derived category of any $\Bbbk$-algebra $A$ by an application of \cite[Cor.~3.31]{jasso2025qshaped}. In this context, one recovers some already known equivalences shown in \cite{MR3742439,gratz2024tilting}, and provides a sufficient combinatorial condition for the existence of silting objects in such categories. We recover these equivalences using our combinatorial approach in Section \ref{sec:examples}. Finally, for examples where the combinatorial conditions fail (e.g. for cyclic quivers) we show that such categories may admit no non-trivial t-structures, revealing phenomena analogous to those observed by Linckelmann \cite{linkelmann} in stable module categories.\smallbreak

\subsection*{Notation and conventions}
\label{notation}

Unless otherwise stated, all subcategories are strict and full. If $\mathcal{A}$ is an additive category admitting direct limits, we say that an object $X$ of $\mathcal{A}$ is \textbf{finitely presented} if $\Hom{\mathcal{A}}{X}{-}$ commutes with direct limits. For a set of objects $\mathcal{X}$ of an additive category $\mathcal{A}$, we denote by $\operatorname{add}\mathcal{X}$ (respectively, $\operatorname{Add}\mathcal{X}$) the subcategory of $\mathcal{A}$ whose objects are summands of finite (respectively, set-indexed) coproducts of objects in $\mathcal{X}$.  For any class $\mathcal{L}$ of objects in an abelian category $\mathcal{A}$, and denoting by $\Ext{\mathcal{A}}{1}{-}{-}$ the pairing associating to two objects the Yoneda extensions between them, we consider the following subcategories
\begin{align*}
  {}^{\perp_1}\mathcal{L}
  &\;=\;
  \{X \in \mathcal{A} \ |\, \Ext{\mathcal{A}}{1}{X}{L}=0 \text{ for every } L \in \mathcal{L} \}
  \\
  \mathcal{L}^{\perp_1}
  &\;=\;
  \{Y \in \mathcal{A} \ |\, \Ext{\mathcal{A}}{1}{L}{Y}=0 \text{ for every } L \in \mathcal{L} \}\;.
\end{align*}

For $\mathcal{T}$ a triangulated category, $M$ an object in $\mathcal{T}$ and $I$ a subset of the integers $\mathbb{Z}$ we denote perpendicular classes as follows:
\begin{align*}
{}^{\perp_I}M\,&=\,\{Y\in\mathcal{T}\,|\, \Hom{\mathcal{T}}{Y}{\Sigma^iM}=0,\text{ for all } i\in I\}\\
M^{\perp_I}\,&=\,\{Y\in\mathcal{T}\,|\, \Hom{\mathcal{T}}{M}{\Sigma^iY}=0,\text{ for all } i\in I\}.  
\end{align*}\smallbreak

The word \textit{module} will stand for \textit{left module}. For a ring $A$, we denote by $\lMod{A}$ the category of left $A$-modules. For a left $A$-module $M$ (or more generally, for an object $M$ in an abelian category $\mathcal{A}$) we denote by $\operatorname{pd}M$ its projective dimension and by  $\operatorname{id}M$ its injective dimension. Unless otherwise stated, $k$ will denote a commutative ring.

\subsection*{Structure of the paper}
 We begin in Section \ref{sec:preliminaries} with preliminaries concerning cotorsion pairs in Frobenius exact categories and their interplay with torsion pairs in the associated stable category, via a Theorem of Saor{\'{\i}}n-{\v{S}}t'ov{\'\i}{\v{c}}ek \cite{SaorinStovicek}. Finally, we give a brief overview of the construction of the $Q$-shaped homotopy category of Holm and J\o rgensen, focusing on its Frobenius model.\smallbreak

 In the second Section \ref{sec:t-structures} we define admissible partitions of the small category $Q$. This allows us to define complete and cohereditary cotorsion pairs in the Frobenius exact category ${}^{\perp_1}\mathscr{E}$ of the bifibrant objects in $\lMod{Q,A}$. In such cotorsion pairs, the left orthogonal class is characterised in terms of the support of the representations in the admissible partition, whereas the right orthogonal class admits a cohomological description. We conclude the section by showing that these cotorsion pairs descend to t-structures in the stable category which coincides with the $Q$-shaped derived category of $A$.\smallbreak

 In Section \ref{sec:tilting} we work over an algebraically closed field $\Bbbk$, and based on the partition of $Q$ we define a finite dimensional $\Bbbk$-algebra, called the \emph{boundary algebra}. We show that its module category homologically embeds in the category $\lMod{Q}$, and that the composition of this embedding with the canonical localisation functor $\lMod{Q}\to \QSD{Q}{\Bbbk}$, maps tilting $\Lambda$-modules to silting objects in $\QSD{Q}{\Bbbk}$. Finally, we prove that the t-structures induced by the images of the small  projective generators of $\Lambda$ coincide with the t-structure obtained in Section \ref{sec:t-structures}.  \smallbreak

 Finally, in Section \ref{sec:examples} we utilise our results to recover well-known equivalences in the $Q$-shaped world, shown in \cite{MR3742439,gratz2024tilting}. We also provide non-examples, where the combinatorial conditions fail (e.g. for cyclic quivers) we show that such categories do not admit any non-trivial t-structures and by extension no non-trivial silting objects.

\section{Preliminaries}\label{sec:preliminaries}

In what follows, we give a concise introduction to the notions of cotorsion pairs in an exact category and the concept of t-structures in a triangulated category. Furthermore, we give a brief exposition of the interplay between these two notions in the context of Frobenius categories and their stable category, as shown by Saor{\'{\i}}n and {\v{S}}t'ov{\'\i}{\v{c}}ek in \cite{SaorinStovicek}. Finally, we recall the basic theory and construction of the so-called $Q$-shaped derived category of a ring, a notion introduced by Holm and J{\o}rgensen in \cite{HJ-JLMS} and the main category of our study.

\subsection{Cotorsion pairs in exact categories}
%\phantom{.} \vspace*{1.5ex}

A class $\mathcal{L}$ of objects in an exact category $\mathscr{C}$ is \textbf{generating} if for every $X \in \mathscr{C}$ there exists a deflation $L \twoheadrightarrow X$ with $L \in \mathcal{L}$. Dually, $\mathcal{L}$ is \textbf{cogenerating} if for every $X \in \mathscr{C}$ there exists an inflation $X \rightarrowtail L$ with $L \in \mathcal{L}$. Let $\Prj{\mathscr{C}}$ be the class of projective objects and $\Inj{\mathscr{C}}$ the class of injective objects in $\mathscr{C}$, see \cite[Definition~11.1 and Rmk.~11.2]{Buhler}. One says that $\mathscr{C}$ \textbf{has enough projectives} if $\Prj{\mathscr{C}}$ is generating; dually, $\mathscr{C}$ \textbf{has enough injectives} if $\Inj{\mathscr{C}}$ is cogenerating. See \cite[Definition~11.9]{Buhler}.\smallbreak

A \textbf{cotorsion pair} in $\mathscr{C}$ is a pair $(\mathcal{A},\mathcal{B})$ of classes of objects in $\mathscr{C}$ satisfying $\mathcal{A}^{\perp_1} = \mathcal{B}$ and $\mathcal{A} = {}^{\perp_1}\mathcal{B}$. To every class $\mathcal{L}$ of objects in $\mathscr{C}$ there are two associated cotorsion pairs,
\begin{equation*}
  \mathfrak{G}_\mathcal{L}
  \;=\;
  ({}^{\perp_1}(\mathcal{L}^{\perp_1}),\mathcal{L}^{\perp_1})
  \qquad \text{and} \qquad
  \mathfrak{C}_\mathcal{L}
  \;=\;
  ({}^{\perp_1}\mathcal{L},({}^{\perp_1}\mathcal{L})^{\perp_1})\;.
\end{equation*}

We call $\mathfrak{G}_\mathcal{L}$ the cotorsion pair \textbf{generated} by $\mathcal{L}$ and $\mathfrak{C}_\mathcal{L}$ the cotorsion pair \textbf{cogenerated}\footnote{\,This terminology is the one used by G{\"o}bel and Trlifaj \cite[Definition~2.2.1]{GobelTrlifaj} and Saor{\'{\i}}n and {\v{S}}t'ov{\'\i}{\v{c}}ek \cite[\S1.3]{SaorinStovicek}. Beware that some authors, like Enochs and Jenda \cite[Definition~7.1.2]{rha}, use the ``opposite'' terminology.}~by~$\mathcal{L}$. Note that, for any class $\mathcal{L}$ of objects in $\mathscr{C}$, there are always inclusions $\Prj{\mathscr{C}} \subseteq {}^{\perp_1}\mathcal{L}$  and $\operatorname{Inj}{\mathscr{C}} \subseteq \mathcal{L}^{\perp_1}$.\smallbreak
A cotorsion pair $(\mathcal{A},\mathcal{B})$ in $\mathscr{C}$ is said to be \textbf{complete} if the following conditions hold:
\begin{rqm}
\item For every $X \in \mathscr{C}$ there exists a conflation $B \rightarrowtail A \twoheadrightarrow X$ in $\mathscr{C}$ with $A \in \mathcal{A}$ and $B \in \mathcal{B}$.  In this case we say that the cotorsion pair has enough projectives, or equivalently that the class $\mathcal{A}$ is \textbf{special precovering}.
\item For every $X \in \mathscr{C}$ there exists a conflation $X \rightarrowtail B' \twoheadrightarrow A'$ in $\mathscr{C}$ with $A' \in \mathcal{A}$ and $B' \in \mathcal{B}$.  In this case we say that the cotorsion pair has enough injectives, or equivalently that the class $\mathcal{B}$ is \textbf{special preenveloping}.
\end{rqm}

Salce's trick \cite{Salce} shows that if $\mathcal{A}$ is generating and $\mathcal{B}$ is cogenerating, then conditions \rqmlbl{1} and \rqmlbl{2} above are equivalent. This follows by inspecting the proofs of \cite[Proposition~7.1.7]{rha} or \cite[Lemma~2.2.6]{GobelTrlifaj} (for module categories) and \cite[Thm.~2.13(5)]{SaorinStovicek} (for exact categories). Lastly, in a Frobenius exact category $\mathscr{C}$ we call a cotorsion pair $(\mathcal{A},\mathcal{B})$ \textbf{hereditary} if $\Omega\,\mathcal{A}\subseteq\mathcal{A}$ and \textbf{cohereditary} if $\Omega^{-1}\mathcal{A}\subseteq\mathcal{A}$, where $\Omega$ is the syzygy functor.\smallbreak

\subsection{t-structures}

    Assume that $\mathcal{C}$ is a subcategory of an additive category $\mathscr{A}$, we denote\footnote{This is an abbreviation of notation. Whenever the orthogonal is denoted without any further decoration, eg. ${}^\perp\mathcal{C}$ we will always mean the Hom-orthogonal, i.e. ${}^{\perp_0}\mathcal{C}$} by ${}^\perp\mathcal{C}$ the subcategory of all objects $X$ in $\mathscr{A}$ with $\Hom{\mathscr{A}}{X}{C}=0$ for all $C$ in $\mathcal{C}$. Dually, one defines $\mathcal{C}^\perp$. Similarly, one can extend the definition of Hom-orthogonality in a triangulated category $\mathscr{T}$.
    
\begin{dfn}
    Let $\mathcal{T}$ be a triangulated category and denote by $\Sigma$ the suspension functor. A pair of subcategories $(\mathcal{X},\mathcal{Y})$ in $\mathcal{T}$ is called a \textbf{torsion pair}, if $\mathcal{X},\mathcal{Y}$ are both closed under direct summands, they are a Hom-orthogonal pair, that is, $\Hom{\mathcal{T}}{X}{Y}=0$ for every $X$ in $\mathcal{X}$ and $Y$ in $\mathcal{Y}$ and every object $Z$ in $\mathcal{T}$ fits into a triangle 
    \begin{equation}\label{eq: approximation triangle}
    X\longrightarrow Z\longrightarrow Y\longrightarrow \Sigma X
        \end{equation}
    with $X$ in $\mathcal{X}$ and $Y$ in $\mathcal{Y}$. Furthermore, a torsion pair\footnote{Note that in our definition of the t-structure the two subcategories form a torsion pair. One should be cautious as plenty of authors in the literature define a t-structure $(\mathcal{X},\mathcal{Y})$ in such a way that $(\mathcal{X},\Sigma^{-1}\mathcal{Y})$ is a torsion pair.} $(\mathcal{X},\mathcal{Y})$ in $\mathcal{T}$ is called a \textbf{t-structure} if moreover, $\mathcal{X}$ is closed under positive suspensions. We call $\mathcal{X}$ the \textbf{aisle} and $\mathcal{Y}$ the \textbf{co-aisle} of the t-structure.
\end{dfn}

In \cite{BeBeDe} it is proven that the \textbf{heart} $\mathcal{H}:=\Sigma^{-1}\mathcal{X}\cap\mathcal{Y}$ of the t-structure  $(\mathcal{X},\mathcal{Y})$ is an abelian category whose exact structure is given by the triangles of $\mathcal{T}$ with terms in $\mathcal{H}$. \smallbreak

Given that $\mathcal{T}$ is a triangulated category with arbitrary coproducts, we call an object $C$ in $\mathcal{T}$ \textbf{compact} if the functor $\Hom{\mathcal{T}}{C}{-}$ commutes with coproducts. A t-structure $(\mathcal{X},\mathcal{Y})$ in $\mathcal{T}$ is called \textbf{compactly generated} if $\mathcal{Y}=\mathcal{S}^\perp$ for a set $\mathcal{S}$ of compact objects in $\mathcal{T}$. The subcategory of compact objects in $\mathcal{T}$ is denoted by $\mathcal{T}^c$. Finally, given a subset $\mathcal{S}$ of objects in $\mathcal{T}$, we define by $\operatorname{Loc}(\mathcal{S})$ the smallest triangulated subcategory of $\mathcal{T}$ that contains $\mathcal{S}$ and it is closed under forming arbitrary coproducts. We say that $\mathcal{S}$ \textbf{generates} $\mathcal{T}$, if $\operatorname{Loc}(\mathcal{S})=\mathcal{T}$.

\begin{exa}
    The stereotypical example of a t-structure is the \textbf{standard t-structure} in the derived category of a ring. Let $\mathcal{T}=\mathbf{D}(R)$ be the (unbounded) derived category of a ring $R$. The pair of subcategories $(\mathbf{D}^{\le0},\mathbf{D}^{\ge1})$ in $\mathbf{D}(R)$, given by
    \[
    \begin{aligned}
        \mathbf{D}^{\le0}&:=\{X\in\mathbf{D}(R)\,|\, H^i(X)=0,\text{ for all }i>0\}\\
        \mathbf{D}^{\ge1}&:=\{X\in\mathbf{D}(R)\,|\, H^i(X)=0,\text{ for all }i<1\},
    \end{aligned}
    \]
   is a t-structure in $\mathbf{D}(R)$ whose heart is proven to be $\mathcal{H}=\mathbf{D}^{\le0}\cap\mathbf{D}^{\ge0}\cong \Mod R$. Notice that the standard t-structure in the derived category of a ring always exists. 
\end{exa}

From this point on, assume that $\mathscr{C}$ is a \textbf{Frobenius exact category} and denote with $\underline{\mathscr{C}}$ the associated stable category. This category is triangulated with suspension functor $\susp=\syz^{-1}$ (see \cite{Happel}) which assigns an object $X$ to its co-syzygy $\syz^{-1}(X)$.  Consider the full subcategory $\mathscr{X}\subseteq\mathscr{C}$. We denote its image in the stable category $\underline{\mathscr{C}}$ under the canonical functor $q\colon \mathscr{C}\longrightarrow\underline{\mathscr{C}}$, by $\underline{\mathscr{X}}$. The following theorem provides an important connection between cotorsion pairs of the Frobenius exact category and t-structures in the stable category.

\begin{thm}[\!{\cite[Prop.~3.3~and~Prop.~3.9]{SaorinStovicek}}]\label{thm: soarin-stovicek correspondence}
There is a bijective correspondence between cotorsion pairs $(\mathcal{A},\mathcal{B})$ in $\mathscr{C}$ and Hom-orthogonal pairs $(\mathscr{X},\mathscr{Y})=(\underline{\Omega\mathcal{A}},\underline{\mathcal{B}})$ in $\underline{\mathscr{C}}$. 
Furthermore, it extends to a bijective correspondence between complete cohereditary cotorsion pairs and t-structures. 
\end{thm}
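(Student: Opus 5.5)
The plan has two parts: first the bijection with Hom-orthogonal pairs, then its refinement to complete cohereditary pairs and t-structures. Throughout I work in the Frobenius category $\mathscr{C}$, where projectives and injectives coincide. For $X,Y\in\mathscr{C}$ this gives natural isomorphisms
\[
\Ext{\mathscr{C}}{1}{X}{Y}\cong\underline{\operatorname{Hom}}(\Omega X,Y)\cong\underline{\operatorname{Hom}}(X,\Sigma Y).
\]
Both come from the conflation $\Omega X\rightarrowtail P\twoheadrightarrow X$ with $P$ projective-injective, together with the long exact Ext sequence. Every class $\mathcal{A}$ appearing in a cotorsion pair contains $\Prj{\mathscr{C}}$ and is closed under direct summands, and likewise for $\mathcal{B}$. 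Hence such classes are unions of isomorphism classes in $\underline{\mathscr{C}}$, and $\mathcal{A}\mapsto\underline{\mathcal{A}}$ loses no information.

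\textbf{Step 1: the bijection with Hom-orthogonal pairs.} Let $(\mathcal{A},\mathcal{B})$ be a cotorsion pair. By the isomorphism above,
\[
\mathcal{B}=\mathcal{A}^{\perp_1}=\{Y\mid \underline{\operatorname{Hom}}(\Omega A,Y)=0\ \forall A\in\mathcal{A}\}=(\underline{\Omega\mathcal{A}})^{\perp}.
\]
Symmetrically, $\mathcal{A}={}^{\perp}(\Sigma\underline{\mathcal{B}})$, so $\underline{\Omega\mathcal{A}}={}^{\perp}\underline{\mathcal{B}}$. Here "Hom-orthogonal pair" means $\mathscr{X}={}^\perp\mathscr{Y}$ and $\mathscr{Y}=\mathscr{X}^\perp$. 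Conversely, given such a pair, I set
\[
\mathcal{A}=q^{-1}(\Sigma\mathscr{X}),\qquad \mathcal{B}=q^{-1}(\mathscr{Y}),
\]
and the same computation shows this is a cotorsion pair. The two assignments are mutually inverse because $\Omega$ is an autoequivalence of $\underline{\mathscr{C}}$.

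\textbf{Step 2: completeness and the approximation triangle.} Assume the pair is complete. Take the conflation $B\rightarrowtail A\twoheadrightarrow X$ from condition (1). In $\underline{\mathscr{C}}$ it yields a triangle
\[
\Omega A\to\Omega X\to B\to A,
\]
which, after replacing $X$ by $\Sigma X$, gives for every object a triangle $X'\to X\to Y'\to\Sigma X'$ with $X'\in\underline{\Omega\mathcal{A}}$ and $Y'\in\underline{\mathcal{B}}$. Conversely, suppose every object admits such a triangle. Choose a representative of the morphism $Y'\to\Sigma X'$ in $\mathscr{C}$ and replace it, up to projective summands, by a deflation. Its kernel then realises the triangle as a conflation of the required shape. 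This identifies completeness with the torsion-pair axiom. Closure under summands is automatic from Step 1.

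\textbf{Step 3: cohereditary and suspension-closure.} The aisle $\mathscr{X}=\underline{\Omega\mathcal{A}}$ satisfies $\Sigma\mathscr{X}\subseteq\mathscr{X}$ if and only if $\underline{\mathcal{A}}\subseteq\underline{\Omega\mathcal{A}}$, i.e. $\Omega^{-1}\mathcal{A}\subseteq\mathcal{A}$. This holds up to projective summands, and these are absorbed because $\Prj{\mathscr{C}}\subseteq\mathcal{A}$ and $\mathcal{A}$ is summand-closed. So cohereditary corresponds exactly to suspension-closure, and combining Steps 2 and 3 gives the second bijection.

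The main obstacle is the lifting in Step 2: turning an arbitrary triangle in $\underline{\mathscr{C}}$ into a genuine conflation in $\mathscr{C}$ with the correct end terms. Here one must use that triangles in $\underline{\mathscr{C}}$ are, up to isomorphism, images of conflations, and check that adding projective-injective summands keeps the terms in $\mathcal{A}$ and $\mathcal{B}$.
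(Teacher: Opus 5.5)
The paper gives no proof of its own here; it quotes Saorín--Šťovíček. Your strategy is the standard argument behind that citation: use $\operatorname{Ext}^1_{\mathscr{C}}(X,Y)\cong\underline{\operatorname{Hom}}(\Omega X,Y)$, then match completeness with approximation triangles and coheredity with $\Sigma$-closure of the aisle. Step 1, the forward half of Step 2, and Step 3 are fine.

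One small omission: to feed a t-structure into Step 1 you need $\mathscr{Y}=\mathscr{X}^{\perp}$ and $\mathscr{X}={}^{\perp}\mathscr{Y}$ for a torsion pair in the paper's sense. This follows from the approximation triangles and closure under summands. For instance, if $Z\in\mathscr{X}^{\perp}$, the map $X'\to Z$ in its triangle vanishes, so $Z$ is a summand of $Y'$.

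The genuine gap is the converse half of Step 2, exactly where you flag the obstacle, and your remedy addresses the wrong issue. Take the triangle $X'\to X\to Y'\xrightarrow{h}\Sigma X'$ and turn a representative of $h$ into a deflation $Y'\oplus P\twoheadrightarrow A'$. Its kernel $K$ is isomorphic to $X$ only in $\underline{\mathscr{C}}$; in $\mathscr{C}$ you merely know $K\oplus P_1\cong X\oplus P_2$ with $P_1,P_2$ projective. So you obtain condition (2) for $K$, not for the prescribed object $X$. The difficulty is not keeping the terms in $\mathcal{A}$ and $\mathcal{B}$ after adding projective summands; that is automatic. It is getting the free end of the conflation to be $X$ itself.

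The clean fix is to lift the morphism whose source is $X$. Represent $X\to Y'$ by $f$ in $\mathscr{C}$, pick an inflation $\iota\colon X\rightarrowtail I$ with $I$ injective, and use the inflation $X\rightarrowtail Y'\oplus I$ with components $f$ and $\iota$. Its cokernel $C$ lies in a triangle $X\to Y'\to C\to\Sigma X$ built on the same morphism $\underline{f}$. Hence $C\cong\Sigma X'$ in $\underline{\mathscr{C}}$, so $C\in q^{-1}(\Sigma\mathscr{X})=\mathcal{A}$, while $Y'\oplus I\in\mathcal{B}$. This is condition (2) for $X$.

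You also need condition (1), since the paper's completeness asks for both (1) and (2), and your write-up does not address this. You can get it dually. Apply the triangle to $\Omega X$ and rotate to $Y''\to\Sigma X''\to X\to\Sigma Y''$. Then add a deflation $P\twoheadrightarrow X$ to a representative of $\Sigma X''\to X$. Alternatively, deduce (1) from (2) by Salce's trick, which is valid here because $\mathcal{A}$ contains the projectives and $\mathcal{B}$ contains the injectives.

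Your $K$ can also be repaired directly. Compose the split inflation $X\rightarrowtail X\oplus P_2$ with $X\oplus P_2\cong K\oplus P_1\rightarrowtail Y'\oplus P\oplus P_1$. The new cokernel sits in a conflation $P_2\rightarrowtail C\twoheadrightarrow A'$, which splits because $P_2$ is injective. Either way, this is an argument you would need to supply.
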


\subsection{The \texorpdfstring{$Q$}{Q}-shaped derived category}
%\phantom{.} \vspace*{1.5ex}
The $Q$-shaped derived category of a ring was introduced in \cite{HJ-JLMS} and generalises the notion of the usual derived category of a ring. We begin by providing a brief overview of the main ideas of the construction of such categories. For more information we refer to \cite{HJ-JLMS,HJ-TAMS,HJ-Abel}.\smallbreak

Consider a $k$-linear category $Q$ (over a commutative ring $k$) which is also Hom-finite, locally bounded, has a Serre functor, admits a nilpotent ideal called the \textbf{pseudoradical} ideal denoted by $\mathfrak{r}$, that in certain examples identifies with the radical ideal of the category $Q$ in the sense of Kelly \cite{Kelly64}, and satisfies the strong retraction property (cf. \cite[Setup~2.1]{HJ-Abel}). Holm and J{\o}rgensen constructed two different combinatorial model structures on the category of \textbf{$Q$-shaped modules} $\lMod{Q,A}=\operatorname{Fun}(Q,\lMod{A})$, which can also be considered as $k$-linear functors $Q\longrightarrow\lMod{A}.$ In particular, they defined the set
\[
\mathscr{E}=\{X\in\lMod{Q,A}\,|\, j_*X\in{}_Q\mathcal{L}\}
\]
where $\lMod{Q}$ is the category of the $Q$-shaped representations with values in $\lMod{k}$, the forgetful functor is denoted by $j_*\colon\lMod{Q,A}\longrightarrow\lMod{Q}$ and $${}_Q\mathcal{L}=\{X\in\lMod{Q}\,|\, \operatorname{pd}_QX<\infty\}=\{X\in\lMod{Q}\,|\, \operatorname{id}_QX<\infty\}.$$ They prove in loc. cit. that the pairs $({}^{\perp_1}\mathscr{E},\mathscr{E})$ and $(\mathscr{E},\mathscr{E}^{\perp_1})$ are complete hereditary cotorsion pairs, both generated by sets (c.f. \cite[Thms.~4.4, 5.5 and 5.9]{HJ-JLMS}).\smallbreak

Consequently, there exist two abelian model structures (in the sense of Hovey \cite{Hovey02}) on $\lMod{Q,A}$. The projective model structure is given by the Hovey triple $(\mathcal{C}_p,\mathcal{W}_p,\mathcal{F}_p)=({}^{\perp_1}\mathscr{E},\mathscr{E},\lMod{Q,A})$, where the cofibrant objects are in ${}^{\perp_1}\mathscr{E}$, the class of trivial objects is $\mathcal{E}$ and everything is fibrant. The injective model structure is given by the Hovey triple $(\mathcal{C}_i,\mathcal{W}_i,\mathcal{F}_i)=(\lMod{Q,A}, \mathscr{E},\mathscr{E}^{\perp_1})$, where the fibrant objects are in $\mathscr{E}^{\perp_1}$, the class of trivial objects is $\mathscr{E}$ and everything is cofibrant. Both of these model structures are combinatorial and as we mention later, they are also stable. Moreover, it is evident that the two model structures share the same trivial objects (and same weak equivalences) therefore, their associated homotopy categories are the same.\smallbreak

Holm and J{\o}rgensen denoted the homotopy category $\htpy(\lMod{Q,A})$ by $\mathbf{D}_Q(A)$, the so-called \textbf{$Q$-shaped derived category} of $A$. In other words, one has
\[
\mathbf{D}_Q(A)=\htpy(\lMod{Q,A})=\lMod{Q,A}\left[\{\text{weq}\}^{-1}\right]
\]

Furthermore, they showed that $\mathbf{D}_Q(A)$ is an algebraic triangulated category in the sense of \cite{Happel}, as it is possible to be defined as the stable category of the Frobenius exact category ${}^{\perp_1}\mathscr{E}$ (resp. $\mathscr{E}^{\perp_1}$) whose projective-injective objects are the categorical projective (resp. injective) objects $\lPrj{Q,A}$ (resp. $\lInj{Q,A}$) of $\lMod{Q,A}$ (see \cite[Thm.~6.5]{HJ-JLMS}). More precisely, one has
\[
\frac{{}^{\perp_1}\mathscr{E}}{\lPrj{Q,A}}\stackrel{\sim}{\longrightarrow}\mathbf{D}_Q(A)\stackrel{\sim}{\longleftarrow}\frac{\mathscr{E}^{\perp_1}}{\lInj{Q,A}}.
\]

It is worth noting that the Q-shaped derived category is a compactly generated triangulated category, as shown in \cite{HJ-TAMS}.\smallbreak

\section{\texorpdfstring{$t$}{t}-structures induced by admissible partitions of \texorpdfstring{$Q$}{Q}}\label{sec:t-structures}

In what follows we utilise the combinatorial structure of the underlying graph of certain $Q$'s in order to define well behaved t-structures in the $Q$-shaped derived category. We use the same setup and notation as in Holm and J{\o}rgensen \cite[Stp.~2.1]{HJ-Abel}. Recall also from \emph{loc. cit.} that the local boundedness of the category $Q$ implies that the following sets are finite
\[
\mathsf{N}_{-}(q)\,=\,\{p\in Q\,|\, Q(p,q)\neq0\}\quad\text{and}\quad \mathsf{N}_{+}(q)\,=\,\{r\in Q\,|\,Q(q,r)\neq0\}.
\]

\begin{dfn}
\label{dfn:L}
Let $\mathcal{L},\mathcal{R}$ be classes of objects of $Q$ that consist a partition of $Q$, such that $\mathcal{L}$ is subject to the following conditions:
\begin{rqm}
\item[(L$_1$)] For every $q \in \mathcal{L}$ one has $\mathsf{N}_-(q) \subseteq \mathcal{L}$; that is, if $q$ is an object in $\mathcal{L}$, then every object $p \in Q$ for which $Q(p,q) \neq 0$ also belongs to $\mathcal{L}$.
\item[(L$_2$)] There does \emph{not} exist an infinite sequence $q_1 \to q_2 \to q_3 \to \cdots$ of non-zero morphisms in the pseudo-radical $\mathfrak{r}$ with $q_1,q_2,q_3,\ldots \in \mathcal{L}$.
\end{rqm}
Dually, $\mathcal{R}$ is subject to the dual condition:
\begin{rqm}
\item[(R$_1$)] For every $q \in \mathcal{R}$ one has $\mathsf{N}_+(q) \subseteq \mathcal{R}$; that is, if $q$ is an object in $\mathcal{R}$, then every object $p \in Q$ for which $Q(q,p) \neq 0$ also belongs to $\mathcal{R}$.
\item[(R$_2$)] There does \emph{not} exist an infinite sequence $ \cdots\to q_3 \to q_2 \to q_1$ of non-zero morphisms in the pseudo-radical $\mathfrak{r}$ with $q_1,q_2,q_3,\ldots \in \mathcal{R}$.
\end{rqm}
We call such a partition of $Q$ an \textbf{admissible partition}.
\end{dfn}

\noindent \textbf{Blanket Assumption.} In the following, we assume that $Q$ is acyclic, in the sense of \cite[Dfn.~5.13]{HJ-TAMS} (in \emph{loc. cit.} they use the terminology ``$Q$ has no cycles''), and $\mathcal{L}$, $\mathcal{R}$ are classes as in \dfnref{L} above.\smallbreak

Recall from \cite{HJ-TAMS} that the \textbf{support} of an object $X \in \lMod{Q,A}$ is given by:
\begin{equation*}
  \supp{X} \,=\, \{q \in Q \,|\, X(q) \neq 0\}\;.
\end{equation*}
Recall from \cite[Prop.~7.15]{HJ-JLMS} the functors $\Cq{q}$, $\Sq{q}$, and $\Kq{q}$ where $q \in Q$ is an object. As shown in \cite[Prop.~7.18]{HJ-JLMS} there are for $X \in \lMod{Q,\,\alg}$ isomorphisms,
\begin{equation*}
  \textstyle
  \Kq{q}(X) \cong\, \bigcap_{g \mspace{1mu}\in\mspace{1mu} \mathfrak{r}(q,*)} \Ker{X(g)}
\quad \text{and} \quad  
\Cq{q}(X) \cong\, X(q)/\sum_{f \mspace{1mu}\in\mspace{1mu} \mathfrak{r}(*,q)} \Im{X(f)} \;,
\end{equation*}
where the intersection is taken over all morphisms $g \in \mathfrak{r}$ with domain $q$, and the sum is taken over all morphisms $f \in \mathfrak{r}$ with codomain $q$. In particular, $\Kq{q}(X)$ can be seen as a submodule of $X(q)$, and $\Cq{q}(X)$ can be seen as a quotient of $X(q)$.

\begin{lem}
  \label{lem:KY}
  Let $Y \neq 0$ be an object in $\lMod{Q,A}$. If $\supp{Y} \subseteq \mathcal{L}$, then there exists $q \in \supp{Y}$ with $\Kq{q}(Y)=Y(q)$, and hence $\Sq{q}(Y(q))$ is a subobject of $Y$ in $\lMod{Q,A}$. 
\end{lem}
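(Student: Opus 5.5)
We argue by contradiction, using condition (L$_2$) of \dfnref{L}. Suppose no $q\in\supp{Y}$ has $\Kq{q}(Y)=Y(q)$. By the description $\Kq{q}(Y)\cong\bigcap_{g\in\mathfrak{r}(q,*)}\Ker{Y(g)}$, for each such $q$ there is a morphism $g\colon q\to q'$ in the pseudoradical $\mathfrak{r}$ with $Y(g)\neq 0$. In particular $Y(q')\neq0$, so $q'\in\supp{Y}\subseteq\mathcal{L}$, and $g\neq 0$ in $Q$.

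Since $Y\neq 0$, pick $q_1\in\supp{Y}$. Iterating, we obtain an infinite sequence $q_1\xrightarrow{g_1}q_2\xrightarrow{g_2}q_3\to\cdots$ of non-zero morphisms in $\mathfrak{r}$, all with objects in $\supp{Y}\subseteq\mathcal{L}$. This contradicts (L$_2$), so some $q\in\supp{Y}$ satisfies $\Kq{q}(Y)=Y(q)$. One subtlety is that (L$_2$) as stated forbids merely any infinite chain of non-zero radical morphisms, not composable non-zero composites. So the chain need not have non-vanishing composites, and the argument applies verbatim. This is exactly where the hypothesis is tailored; no König-type argument is needed.

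For the second assertion, recall from \cite[Prop.~7.15]{HJ-JLMS} that $\Sq{q}$ is the stalk functor: $\Sq{q}(M)$ is $M$ at $q$, zero elsewhere, and all radical morphisms act by zero (this uses the strong retraction property, $Q(q,q)=k\oplus\mathfrak{r}(q,q)$). There is an adjunction $\Sq{q}\dashv\Kq{q}$, which is what makes $\Kq{q}$ the right adjoint described by kernels.

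The identity $Y(q)=\Kq{q}(Y)$ then corresponds under adjunction to a morphism $\Sq{q}(Y(q))\to Y$ that is the identity at $q$ and zero elsewhere. It is therefore a monomorphism in $\lMod{Q,A}$, since monomorphisms are detected objectwise. Concretely, the subfunctor $U\subseteq Y$ with $U(q)=Y(q)$ and $U(p)=0$ for $p\neq q$ is well defined. Indeed, every non-identity-part morphism out of $q$ lies in $\mathfrak{r}(q,*)$, and each such morphism kills $Y(q)$, while morphisms into $q$ trivially preserve $U$. Hence $U\cong\Sq{q}(Y(q))$.

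The only step needing care is this closure check. It relies on every morphism $q\to p$ with $p\neq q$ lying in $\mathfrak{r}$, and on endomorphisms of $q$ decomposing as scalar plus radical. Both facts are part of the setup \cite[Setup~2.1]{HJ-Abel}.
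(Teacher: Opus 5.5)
Your proof is correct and follows the paper's argument essentially verbatim. Both prove the first assertion by contradiction, building an infinite chain of non-zero pseudo-radical morphisms inside $\supp{Y}\subseteq\mathcal{L}$ that violates (L$_2$). Both get the second assertion from the counit $\Sq{q}\Kq{q}(Y)\to Y$ of the adjunction $(\Sq{q},\Kq{q})$, which is the identity at $q$ and zero elsewhere and hence monic. Your extra explicit subfunctor check is redundant but valid.
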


\begin{proof}
  Suppose for contradiction that $\Kq{q}(Y) \neq Y(q)$ holds for every $q \in \supp{Y}$. Since $Y$ is non-zero, it has non-empty support, so we can choose some object $q_1 \in \supp{Y}$. By assumption we have $\Kq{q_1}(Y) \neq Y(q_1)$; this implies that there exists some $g_1 \colon q_1 \to q_2$ in $\mathfrak{r}$ with $Y(g_1) \neq 0$; in particular, $g_1 \neq 0$ and $q_2 \in \supp{Y}$. By assumption, $\Kq{q_2}(Y) \neq Y(q_2)$, so there exists a $g_2 \colon q_2 \to q_3$ in $\mathfrak{r}$ with $Y(g_2) \neq 0$; in particular, $g_2 \neq 0$ and $q_3 \in \supp{Y}$. Continuing in this manner, we construct an infinite sequence, 
\begin{equation*}
  \xymatrix@C=1.5pc{  
    q_1 \ar[r]^-{g_1} &
    q_2 \ar[r]^-{g_2} &
    q_3 \ar[r]^-{g_3} &
    \ \cdots 
  },
\end{equation*}
of non-zero morphisms in $\mathfrak{r}$ where $q_1,q_2,q_3,\ldots$ belong to $\supp{Y}$. As $\supp{Y} \subseteq \mathcal{L}$ holds, this contradicts condition (2) in \dfnref{L}. Hence there exists some $q \in \supp{Y}$ with $\Kq{q}(Y)=Y(q)$. By \cite[Prop.~7.15]{HJ-JLMS} there is an adjunction $(\Sq{q},\Kq{q})$; write
$\varepsilon_q$ for the counit. The morphism 
\begin{equation*}
  \xymatrix{
    \Sq{q}(Y(q)) \,=\, \Sq{q}\Kq{q}(Y) \ar[r]^-{\varepsilon_q^Y} & Y
  }
\end{equation*}
in $\lMod{Q,A}$ is monic as $\varepsilon_q^Y(q) \colon Y(q) \to Y(q)$ is the identity map and $\varepsilon_q^Y(p) \colon 0 \to Y(p)$ is the zero map for $p \neq q$. Hence $\Sq{q}(Y(q))$ is a subobject of $Y$ in $\lMod{Q,A}$.
\end{proof}

\begin{prp}
  \label{prp:filtration}
  Every $X \in \lMod{Q,\,A}$ with $\supp{X} \subseteq \mathcal{L}$ admits a filtration, that is, a sequence of monomorphisms
  \begin{equation}
  \label{eq:trans-filt}
   0 = X_0 \rightarrowtail X_1 \rightarrowtail X_2 \rightarrowtail \cdots  \rightarrowtail X_\omega \rightarrowtail X_{\omega+1} \rightarrowtail \cdots 
  \end{equation}
  with the following properties:
  \begin{rqm}
  \item $X_\lambda = X$ for some ordinal $\lambda$.
  \item For every limit ordinal $\beta \le \lambda$ one has \smash{$X_\beta = \varinjlim_{\alpha<\beta} X_\alpha$}.
  \item For every successor ordinal $\alpha+1 \le \lambda$ one has $X_{\alpha+1}/X_\alpha = \Sq{q_\alpha}(X(q_\alpha))$ for some object $q_\alpha \in \supp{X} \smallsetminus \supp{X_\alpha}$.
  \end{rqm}  
\end{prp}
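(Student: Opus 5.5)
We construct the filtration by transfinite recursion, using \lemref{KY} at each successor step applied to a quotient of $X$.

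Set $X_0=0$. Suppose $X_\alpha\subseteq X$ has been constructed, together with a set $T_\alpha=\supp{X_\alpha}$. We maintain the invariant that $X_\alpha$ is the subfunctor of $X$ given by
\[
X_\alpha(p)=X(p)\ \text{for } p\in T_\alpha,\qquad X_\alpha(p)=0 \ \text{otherwise}.
\]
We also require that $T_\alpha$ is ``closed under successors in $\supp{X}$''. That is, whenever $p\in T_\alpha$ and $g\colon p\to p'$ in $\mathfrak{r}$ with $X(g)\neq 0$, we have $p'\in T_\alpha$. This closure is exactly what makes the assignment above a subfunctor: morphisms out of $T_\alpha$ that act nontrivially land back in $T_\alpha$. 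Note that non-identity morphisms between distinct objects lie in $\mathfrak{r}$ by the setup.

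If $X_\alpha\neq X$, consider $Y=X/X_\alpha$. It has $Y(p)=X(p)$ for $p\in\supp{X}\smallsetminus T_\alpha$ and $Y(p)=0$ otherwise, so $Y\neq0$ and $\supp{Y}\subseteq\mathcal{L}$. By \lemref{KY} there is $q_\alpha\in\supp{Y}$ with $\Kq{q_\alpha}(Y)=Y(q_\alpha)$. Concretely, every $g\colon q_\alpha\to p'$ in $\mathfrak{r}$ with $X(g)\neq0$ has $p'\in T_\alpha$.

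Define $X_{\alpha+1}$ by the invariant with $T_{\alpha+1}=T_\alpha\cup\{q_\alpha\}$. Closure of $T_{\alpha+1}$ follows from closure of $T_\alpha$ together with this property of $q_\alpha$, so $X_{\alpha+1}$ is a subfunctor of $X$ containing $X_\alpha$. The quotient $X_{\alpha+1}/X_\alpha$ is concentrated at $q_\alpha$ with value $X(q_\alpha)$. It therefore equals $\Sq{q_\alpha}(X(q_\alpha))$, since a representation supported at a single object has all radical morphisms acting by zero; here we use that $Q(q,q)=k\cdot\mathrm{id}\oplus\mathfrak{r}(q,q)$ in the setup. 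Also $q_\alpha\in\supp{X}\smallsetminus\supp{X_\alpha}$, as required in (3).

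At a limit ordinal $\beta$, put $T_\beta=\bigcup_{\alpha<\beta}T_\alpha$ and $X_\beta=\varinjlim_{\alpha<\beta}X_\alpha$. The colimit is computed pointwise as a union of subfunctors of $X$, so it again satisfies the invariant, and the union of closed sets is closed. This gives (2).

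Since the $T_\alpha$ strictly increase at successor steps inside the set $\supp{X}$, the process stops at some ordinal $\lambda\le|\supp{X}|^+$. At that point $X_\lambda=X$, because otherwise \lemref{KY} would provide a further step. This gives (1).

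The main point needing care is verifying that $X_{\alpha+1}$ is genuinely a subfunctor, i.e.\ the closure invariant. This is precisely what the condition $\Kq{q_\alpha}(Y)=Y(q_\alpha)$ supplies, read through the description of $\Kq{q}$ as an intersection of kernels of radical maps. The other point is identifying the one-point quotient with $\Sq{q_\alpha}$, which relies on the local structure of $Q$ from the setup; the rest is routine.
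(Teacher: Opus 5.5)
Your argument is correct and is essentially the paper's. Both proofs recurse transfinitely, applying Lemma~\ref{lem:KY} to $X/X_\alpha$, and rely on the invariant that $X_\alpha$ agrees with $X$ on its support. You build this invariant (with the closure of $T_\alpha$) directly into the definition of $X_{\alpha+1}$; the paper instead takes $X_{\alpha+1}$ to be the preimage of $\Sq{q_\alpha}(Y_\alpha(q_\alpha))\subseteq Y_\alpha$ and proves the invariant ($*$) afterwards by a separate induction.

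One justification needs fixing. It is false in general that a representation concentrated at a single object has its radical endomorphisms acting by zero. It does hold here, because $\Kq{q_\alpha}(Y)=Y(q_\alpha)$ already gives $X(r)=0$ for every $r\in\mathfrak{r}(q_\alpha,q_\alpha)$. In fact, (L$_2$) forces $\mathfrak{r}(q,q)=0$ for all $q\in\mathcal{L}$.
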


\begin{proof}
  We start by constructing a filtration \eqref{trans-filt} that satisfies conditions \rqmlbl{1}, \rqmlbl{2}, and 
  \begin{eqc}
    \item[\rqmlbl{3$'$}] For every successor ordinal $\alpha+1 \le \lambda$ one has $X_{\alpha+1}/X_\alpha = \Sq{q_\alpha}(X(q_\alpha)/X_\alpha(q_\alpha))$ for some object $q_\alpha \in \supp{X/X_\alpha}$.
  \end{eqc}  
Set $X_0=0$. Let $\beta$ be an ordinal and assume that $X_\alpha$ has been constructed for every $\alpha<\beta$. If $\beta$ is a limit ordinal, define $X_\beta$ as in \rqmlbl{2}. Now assume that $\beta = \alpha+1$ is a successor ordinal. If $X_\alpha = X$, then \rqmlbl{1} holds with $\lambda = \alpha$ and we are done; otherwise, $Y_\alpha = X/X_\alpha$ is non-zero. As $\supp{Y_\alpha} \subseteq \supp{X} \subseteq \mathcal{L}$ we can by \lemref{KY} choose $q_\alpha \in \supp{Y_\alpha}$ such that $\Sq{q_\alpha}(Y(q_\alpha))$ is a subobject of the quotient $Y_\alpha = X/X_\alpha$, and hence it has the form $\Sq{q_\alpha}(Y(q_\alpha)) = X_{\alpha+1}/X_\alpha$ for some $X_\alpha \subseteq X_{\alpha+1} \subseteq X$. Thus, \rqmlbl{3$'$} holds. Note that since $Y_\alpha(q_\alpha) \neq 0$, the inclusion $X_\alpha \subset X_{\alpha+1}$ is strict, so eventually we get $X_\lambda = X$ for some ordinal $\lambda$, and hence \rqmlbl{1} holds.\smallbreak

Below we prove that the following condition holds for every $\alpha \le \lambda$.
  \begin{eqc}
  \item[\textnormal{($*$)}] $X_\alpha(p) = X(p)$ for every $p \in \supp{X_\alpha}$.
  \end{eqc}  
Once this has been proved, the remaining property \rqmlbl{3} follows from the already established property \rqmlbl{3$'$}. Indeed, by \rqmlbl{3$'$} we have a $q_\alpha \in \supp{X/X_\alpha}$ and hence $X_\alpha(q_\alpha) \neq X(q_\alpha)$; thus condition ($*$) implies that $q_\alpha \notin \supp{X_\alpha}$. This means that $X_\alpha(q_\alpha)=0$, and consequently
\begin{equation*}
  \Sq{q_\alpha}(X(q_\alpha)/X_\alpha(q_\alpha)) \,=\,
  \Sq{q_\alpha}(X(q_\alpha)) \;.
\end{equation*}
Therefore, in view of ($*$), the property \rqmlbl{3} follows from \rqmlbl{3$'$}.\smallbreak

It remains to prove condition ($*$). We do this by transfinite induction on $\alpha$. For $\alpha=0$ there is nothing to prove as $X_0=0$ and hence $\supp{X_0} = \varnothing$. Now let $\beta \le \lambda$ be an ordinal and assume that ($*$) holds for every $\alpha<\beta$. First assume that  $\beta$ is a limit ordinal, in which case \smash{$X_\beta = \varinjlim_{\alpha<\beta} X_\alpha$} by \rqmlbl{2}. There is an equality,
\begin{equation}
  \label{eq:cup}
  \textstyle
  \supp{X_\beta} \,=\, \bigcup_{\alpha<\beta} \supp{X_\alpha}\;.
\end{equation}
Indeed, the inclusion ``$\subseteq$'' is trivial. On the other hand, if $p \in \supp{X_{\alpha_0}}$ for some $\alpha_0<\beta$, then also $p \in \supp{X_\alpha}$ for every $\alpha \ge \alpha_0$ since there is an inclusion $X_{\alpha_0} \subseteq X_\alpha$. Hence, by the induction hypothesis one gets $0 \neq X_\alpha(p) = X(p)$ for every $\alpha_0 \le \alpha < \beta$, and consequently
\begin{equation*}
  \textstyle
  X_\beta(p) 
  \,=\, 
  \varinjlim_{\,\alpha<\beta} X_\alpha(p) 
  \,=\, 
  \varinjlim_{\,\alpha_0 \le \alpha<\beta} X_\alpha(p) 
  \,=\, 
  X(p) \,\neq\, 0 \;.
\end{equation*}
This simultaneously proves the other inclusion ``$\supseteq$'' in \eqref{cup} and establishes condition ($*$) for the limit ordinal $\beta$. Finally, consider the case where $\beta = \alpha+1$ is a successor ordinal. By \rqmlbl{3$'$} there is a short exact sequence
\begin{equation}
\label{eq:ses}
  0 \longrightarrow X_\alpha \longrightarrow X_{\alpha+1} \longrightarrow \Sq{q_\alpha}(Y_\alpha(q_\alpha)) \longrightarrow 0
\end{equation}
where $Y_\alpha = X/X_\alpha$ and $q_\alpha \in \supp{Y_\alpha}$. This immediately yields:
\begin{equation*}
  \supp{X_{\alpha+1}} \,=\, \supp{X_\alpha} \,\cup\, \supp{\Sq{q_\alpha}(Y_\alpha(q_\alpha))} \,=\, \supp{X_\alpha} \,\cup\, \{q_\alpha\}\;.
\end{equation*}
For $p \in \supp{X_\alpha}$ the induction hypothesis yields $X_\alpha(p) = X(p)$, and hence $X_{\alpha+1}(p) = X(p)$ by the inclusions $X_\alpha \subseteq X_{\alpha+1} \subseteq X$. As $q_\alpha \in \supp{Y_\alpha}$ we have $X_\alpha(q_\alpha) \neq X(q_\alpha)$, so $q_\alpha$ does not belong to $\supp{X_\alpha}$ as condition ($*$) holds for the ordinal $\alpha$. This means that $X_\alpha(q_\alpha)=0$ and hence $Y_\alpha(q_\alpha) = X(q_\alpha)$. In view of this, the short exact sequence \eqref{ses} now yields
\begin{equation*}
  X_{\alpha+1}(q_\alpha) \,=\, \Sq{q}(Y_\alpha(q_\alpha))(q_\alpha) \,=\, Y_\alpha(q_\alpha) \,=\, X(q_\alpha) \;,
\end{equation*}
which establishes condition ($*$) for the successor ordinal $\beta = \alpha+1$.
\end{proof}

\begin{dfn}\label{dfn:filtration}
    Let $\mathcal{Z}$ be a class of objects in an exact category $\mathcal{E}$. We say that an object $X$ in $\mathcal{E}$ is \textbf{filtered by objects in $\mathcal{Z}$} (or it admits a \textbf{$\mathcal{Z}$-filtration}) if there exists a sequence of inflations
  \begin{equation*}
   0 = X_0 \rightarrowtail X_1 \rightarrowtail X_2 \rightarrowtail \cdots  \rightarrowtail X_\omega \rightarrowtail X_{\omega+1} \rightarrowtail \cdots \rightarrowtail X_\lambda = X
  \end{equation*}
  with the following properties:
  \begin{rqm}
  \item For every limit ordinal $\beta \le \lambda$ one has \smash{$X_\beta = \varinjlim_{\alpha<\beta} X_\alpha$}.
 \item For every successor ordinal $\alpha+1 \le \lambda$ one has $X_{\alpha+1}/X_\alpha \in \mathcal{Z}$.
  \end{rqm}
  The class of all $\mathcal{Z}$-filtered objects is denoted by $\operatorname{Filt}\text{-}\mathcal{Z}$.
\end{dfn}

\begin{lem}
  \label{lem:filt-supp}
  Let $\mathbb{L}$ be a class of objects in $Q$ and $\mathcal{Z}$ a class of objects in $\lMod{Q,A}$ such that $\supp{Z} \subseteq \mathbb{L}$ holds for every $Z \in \mathcal{Z}$. Then one has $\supp{X} \subseteq \mathbb{L}$ for every $X \in \operatorname{Filt}\text{-}\mathcal{Z}$. Consequently, $\supp{(\operatorname{Filt}\text{-}\mathcal{Z})}=\supp{\mathcal{Z}}$.
\end{lem}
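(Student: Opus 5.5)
We prove the statement by transfinite induction along a given $\mathcal{Z}$-filtration. The whole argument takes place in $\lMod{Q,A}$, where the exact structure is the abelian one. Inflations are monomorphisms, direct limits are computed pointwise, and $\supp{}$ is read off objectwise.

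Fix $X \in \operatorname{Filt}\text{-}\mathcal{Z}$ with a filtration $0 = X_0 \rightarrowtail \cdots \rightarrowtail X_\lambda = X$ as in \dfnref{filtration}. We show that $\supp{X_\beta} \subseteq \mathbb{L}$ for every $\beta \le \lambda$.

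\begin{itemize}
\item \emph{Base case.} For $\beta=0$ we have $\supp{X_0}=\varnothing$.
\item \emph{Successor step.} For $\beta=\alpha+1$ there is a short exact sequence $0\to X_\alpha\to X_{\alpha+1}\to Z\to 0$ with $Z\in\mathcal{Z}$. Evaluating at any $p\in Q$ gives an exact sequence of $A$-modules $0\to X_\alpha(p)\to X_{\alpha+1}(p)\to Z(p)\to0$. Hence $X_{\alpha+1}(p)\neq 0$ forces $X_\alpha(p)\neq0$ or $Z(p)\neq 0$, so
\[
\supp{X_{\alpha+1}}\subseteq\supp{X_\alpha}\cup\supp{Z}\subseteq\mathbb{L}.
\]
\item \emph{Limit step.} For a limit ordinal $\beta$, evaluation at $p$ commutes with colimits in the functor category, so $X_\beta(p)=\varinjlim_{\alpha<\beta}X_\alpha(p)$. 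A direct limit of zero modules is zero. Thus $p\notin\bigcup_{\alpha<\beta}\supp{X_\alpha}$ implies $p\notin\supp{X_\beta}$, which gives
\[
\supp{X_\beta}\subseteq\bigcup_{\alpha<\beta}\supp{X_\alpha}\subseteq\mathbb{L}.
\]
\end{itemize}
Taking $\beta=\lambda$ yields $\supp{X}\subseteq\mathbb{L}$.

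For the ``consequently'' part, read $\supp{\mathcal{Z}}$ as $\bigcup_{Z\in\mathcal{Z}}\supp{Z}$ and likewise for $\operatorname{Filt}\text{-}\mathcal{Z}$. Apply the first part with $\mathbb{L}:=\supp{\mathcal{Z}}$; the hypothesis on $\mathcal{Z}$ holds trivially for this choice. This gives $\supp{(\operatorname{Filt}\text{-}\mathcal{Z})}\subseteq\supp{\mathcal{Z}}$. For the reverse inclusion, every $Z\in\mathcal{Z}$ is itself $\mathcal{Z}$-filtered via the length-one filtration $0\rightarrowtail Z$, so $\mathcal{Z}\subseteq\operatorname{Filt}\text{-}\mathcal{Z}$.

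There is no real obstacle here. The only point needing care is the limit step, namely that colimits in $\lMod{Q,A}=\operatorname{Fun}(Q,\lMod{A})$ are computed objectwise. This holds because $\lMod{A}$ is cocomplete. Note also that the limit step only needs the inclusion ``$\subseteq$'' of supports, not the equality \eqref{cup} proved earlier.
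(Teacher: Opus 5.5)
Your proposal is correct and follows essentially the same transfinite induction as the paper: base case $X_0=0$, successor step via the short exact sequence $0\to X_\alpha\to X_{\alpha+1}\to X_{\alpha+1}/X_\alpha\to 0$, and limit step via pointwise colimits, with your length-one filtration $0\rightarrowtail Z$ spelling out the reverse inclusion the paper calls straightforward. Your remark that the limit step only needs $\supp{X_\beta}\subseteq\bigcup_{\alpha<\beta}\supp{X_\alpha}$ is apt: the paper cites the equality \eqref{cup}, whose ``$\supseteq$'' half was proved using property ($*$) of the specific filtration in \prpref{filtration}, whereas only the trivial inclusion is actually used here.
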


\begin{proof}
  If $X$ belongs to $\operatorname{Filt}\text{-}\mathcal{Z}$, then $X$ has a filtration
  \begin{equation*}
   0 = X_0 \rightarrowtail X_1 \rightarrowtail X_2 \rightarrowtail \cdots  \rightarrowtail X_\omega \rightarrowtail X_{\omega+1} \rightarrowtail \cdots \rightarrowtail X_\lambda = X
  \end{equation*}
  satisfying the properties of Definition \ref{dfn:filtration}.\smallbreak
  
By transfinite induction we will prove that $\supp{X_\alpha} \subseteq \mathbb{L}$ for every $\alpha \le \lambda$; note that with $\alpha = \lambda$ we obtain $\supp{X} \subseteq \mathbb{L}$, as desired. For $\alpha=0$ there is nothing to prove as $X_0=0$ and hence $\supp{X_0} = \varnothing$. Now let $\beta \le \lambda$ be an ordinal and assume that $\supp{X_\alpha} \subseteq \mathbb{L}$ holds for every $\alpha < \beta$. If $\beta$ is a limit ordinal, then 
\smash{$X_\beta = \varinjlim_{\alpha<\beta} X_\alpha$} by \rqmlbl{1},  and hence
\begin{equation*}
  \textstyle
  \supp{X_\beta} \,=\, \bigcup_{\alpha<\beta} \supp{X_\alpha} \,\subseteq\, \mathbb{L} \;,
\end{equation*}
where the equality follows as in \eqref{cup} and the inclusion holds by the induction hypothesis. If $\beta = \alpha+1$ is a successor ordinal the exact sequence $0 \to X_\alpha \to X_{\alpha+1} \to X_{\alpha+1}/X_\alpha \to 0$~yields
\begin{equation}
  \label{eq:3xsupp}
  \supp{X_{\alpha+1}} 
  \,=\, 
  \supp{X_\alpha} \;\cup\; \supp{(X_{\alpha+1}/X_\alpha)}  
  \;.
\end{equation}
By the induction hypothesis we have $\supp{X_\alpha} \subseteq \mathbb{L}$. By \rqmlbl{2} one has $X_{\alpha+1}/X_\alpha \in \mathcal{Z}$ and hence $\supp{(X_{\alpha+1}/X_\alpha)} \subseteq \mathbb{L}$ by assumption. Thus \eqref{3xsupp} implies that $\supp{X_{\alpha+1}} \subseteq \mathbb{L}$.\smallbreak

We proved that $\supp{(\operatorname{Filt}\text{-}\mathcal{Z})}\subset\supp{\mathcal{Z}}$. The other inclusion $\supp{\mathcal{Z}}\subset\supp{(\operatorname{Filt}\text{-}\mathcal{Z})}$ is straightforward.
\end{proof}

Recall from Holm and J{\o}rgensen \cite[Thm.~6.5]{HJ-JLMS} the Frobenius exact subcategory ${}^{\perp_1}\lE{}$ of the Grothendieck abelian category $\lMod{Q,A}$ consisting of the bifibrant objects with respect to the model structure from \emph{loc. cit.}.
\begin{prp}
  \label{prp:filt-description}
  For the subset \mbox{$\mathcal{S}_{\mathcal{L}} = \{\Sq{q}(A) \ | \ q \in \mathcal{L} \}$} of ${}^{\perp_1}\lE{} \subseteq \lMod{Q,A}$ one has
\begin{equation*}
    \operatorname{Filt}\text{-}\mathcal{S}_{\mathcal{L}}
    \,=\,
    \{ X \in \lMod{Q,A} \;|\; 
    \supp{X} \subseteq \mathcal{L} \text{ and } 
    X(q) \text{ is a free $A$-module for every } q \in \mathcal{L}
    \} \;.
\end{equation*}
Moreover, this class is contained in ${}^{\perp_1}\lE{}$.
\end{prp}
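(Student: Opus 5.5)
We prove the two inclusions separately and then the containment in ${}^{\perp_1}\lE{}$.

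For ``$\subseteq$'', we apply \lemref{filt-supp} with $\mathbb{L}=\mathcal{L}$ and $\mathcal{Z}=\mathcal{S}_{\mathcal{L}}$. Since $\supp{\Sq{q}(A)}=\{q\}\subseteq\mathcal{L}$ for $q\in\mathcal{L}$, every $X\in\operatorname{Filt}\text{-}\mathcal{S}_{\mathcal{L}}$ has $\supp{X}\subseteq\mathcal{L}$. For freeness we use transfinite induction along the filtration, evaluating pointwise at a fixed $p\in Q$. Evaluation at $p$ is exact and commutes with direct limits. Each quotient $X_{\alpha+1}(p)/X_\alpha(p)$ is either $A$ or $0$, so each step $X_\alpha(p)\to X_{\alpha+1}(p)$ is split with free cokernel. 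By induction, $X_\alpha(p)$ is free with a basis extending the bases chosen at earlier stages; at limit ordinals we take the union of the compatible bases. Hence $X(p)=X_\lambda(p)$ is free.

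For ``$\supseteq$'', let $X$ have support in $\mathcal{L}$ with all $X(q)$ free. \prpref{filtration} gives a filtration whose successive quotients are $\Sq{q_\alpha}(X(q_\alpha))$. Each such quotient is a stalk at a single vertex $q_\alpha\in\mathcal{L}$ with value a free module $A^{(I_\alpha)}$, and the stalk functor preserves coproducts, so
\[
\Sq{q_\alpha}(X(q_\alpha))\cong\Sq{q_\alpha}(A)^{(I_\alpha)}.
\]
A coproduct $\bigoplus_{i\in I}Z_i$ is filtered by the $Z_i$: well-order $I$ and take partial sums, which are inclusions with colimits at limit stages. Refining each step of the filtration from \prpref{filtration} by such a filtration yields an $\mathcal{S}_{\mathcal{L}}$-filtration of $X$. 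The refined chain is again a continuous chain of monomorphisms, by a standard concatenation of ordinals. Here we use that the sequences are short exact in the abelian category $\lMod{Q,A}$ and that direct limits there are exact.

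For the last claim, we first show $\Sq{q}(A)\in{}^{\perp_1}\lE{}$. One option is the Holm--J\o rgensen description: objects with suitably projective ``pointwise'' data lie in ${}^{\perp_1}\mathscr{E}$, and ${}^{\perp_1}\mathscr{E}$ is generated by $\{\Sq{q}(A)\}$ up to projectives. Indeed, \cite[Thm.~4.4]{HJ-JLMS} shows the cotorsion pair $({}^{\perp_1}\mathscr{E},\mathscr{E})$ is generated by the set of stalks $\Sq{q}(A)$ (together with projectives), so $\mathscr{E}=\{\Sq{q}(A)\}^{\perp_1}$ and each $\Sq{q}(A)$ lies in ${}^{\perp_1}\mathscr{E}$. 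Alternatively, one can check directly that $\Ext{}{1}{\Sq{q}(A)}{E}=0$ via the adjunction/derived functor identifying it with the homology of $E$ at $q$, which vanishes for $E\in\mathscr{E}$. Then Eklof's lemma, valid for cotorsion pairs in Grothendieck categories, shows ${}^{\perp_1}\mathscr{E}$ is closed under transfinite extensions, so $\operatorname{Filt}\text{-}\mathcal{S}_{\mathcal{L}}\subseteq{}^{\perp_1}\mathscr{E}$. The main obstacle is this membership $\Sq{q}(A)\in{}^{\perp_1}\mathscr{E}$: we must cite the precise generating set from \cite{HJ-JLMS} correctly, or verify the Ext-vanishing via the characterisation $\mathscr{E}=\{X\,|\,\cH[i]{q}(X)=0\}$. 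The bifibrancy (the ``${}^{\perp_1}\lE{}$'' as a Frobenius category) then follows since everything is fibrant in the projective model structure.
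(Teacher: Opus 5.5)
Your proposal is correct and follows the paper's proof. For ``$\subseteq$'' you use \lemref{filt-supp} plus pointwise evaluation, and for ``$\supseteq$'' you use \prpref{filtration} plus $\Sq{q}(A^{(I)})\cong\Sq{q}(A)^{(I)}$; the only difference is that you prove by hand what the paper cites, namely that $\operatorname{Filt}\text{-}\{0,A\}$ consists of free modules \cite{GobelTrlifaj} and the refinement and coproduct statements of \cite[Lem.~1.6]{Sto2013a}.

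For the final containment the paper just cites \cite[Prop.~3.2(a)]{HJ-TAMS}, whereas you deduce it from $\Sq{q}(A)\in{}^{\perp_1}\lE{}$ and Eklof's lemma. This works if you rely on your second justification, $\Ext{Q,A}{1}{\Sq{q}(A)}{E}\cong\cH[1]{q}(E)=0$ for $E\in\lE{}$. Do not rely on the claim that \cite[Thm.~4.4]{HJ-JLMS} exhibits the stalks as the generating set: the paper only cites \cite[Thm.~5.5]{HJ-JLMS} for generation by some unspecified set.
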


\begin{proof}
  First assume that $X$ belongs to $\operatorname{Filt}\text{-}\mathcal{S}_{\mathcal{L}}$; that is, $X$ has a filtration
  \begin{equation}
  \label{eq:filtration-1}
   0 = X_0 \rightarrowtail X_1 \rightarrowtail X_2 \rightarrowtail \cdots  \rightarrowtail X_\omega \rightarrowtail X_{\omega+1} \rightarrowtail \cdots \rightarrowtail X_\lambda = X
  \end{equation}
 satisfying the properties of Definition \ref{dfn:filtration}.\smallbreak
  
Since one has $\supp{\Sq{q}(A)} = \{q\}$, every object in $\mathcal{S}_{\mathcal{L}}$ has its support contained in $\mathcal{L}$, so \lemref{filt-supp} implies that $\supp{X} \subseteq \mathcal{L}$.\smallbreak
  
Now fix $q \in \mathcal{L}$. By \rqmlbl{2} each quotient $X_{\alpha+1}/X_\alpha$ has the form $X_{\alpha+1}/X_\alpha = \Sq{q_\alpha}(A)$ for some $q_\alpha \in \mathcal{L}$ and hence $X_{\alpha+1}(q)/X_\alpha(q) = \Sq{q_\alpha}(A)(q) \in \{0,A\}$ by \cite[Lem.~7.10 and Prop.~7.15]{HJ-JLMS}. Thus, by evaluating the given filtration \eqref{filtration-1} at $q$, we obtain a filtration of the $A$-module $X(q)$ with quotients in $\{0,A\}$; that is, $X(q) \in \operatorname{Filt}\text{-}\{0,A\}$ in $\lMod{A}$. It now easily follows that $X(q)$ is free, cf.~the remarks following G{\"o}bel and Trlifaj \cite[Cor.~3.2.4]{GobelTrlifaj}.\smallbreak

Conversely, assume $\supp{X} \subseteq \mathcal{L}$ and that the $A$-module $X(q)$ is free for every $q \in \mathcal{L}$ (equivalently, for every $q \in Q$ since $X(q)=0$ for $q \notin \mathcal{L}$). Consider the set
\begin{equation*}
  \mathcal{X} 
  \,=\, 
  \{ \Sq{q}(X(q)) \ | \ q \in \mathcal{L} \} \;.
\end{equation*}
It follows from \prpref{filtration} that $X \in \operatorname{Filt}\text{-}\mathcal{X}$, so to show that $X$ is in $\operatorname{Filt}\text{-}\mathcal{S}_{\mathcal{L}}$, it suffices to prove the inclusion
\begin{equation}
  \label{eq:inclusion}
  \operatorname{Filt}\text{-}\mathcal{X}
  \,\subseteq\, 
  \operatorname{Filt}\text{-}\mathcal{S}_{\mathcal{L}} \;.
\end{equation}
Recall from \cite[Prop.~3.12]{HJ-JLMS} that $\lMod{Q,A}$ is a Grothendieck category, so the theory developed in {\v{S}}t'ov{\'\i}{\v{c}}ek \cite{Sto2013a} applies to this category. Since $\mathcal{S}_{\mathcal{L}}$ is a set, $\operatorname{Filt}\text{-}\mathcal{S}_{\mathcal{L}}$ is by definition a deconstructible class, see \cite[Def.~1.4]{Sto2013a}. Hence, to prove the inclusion \eqref{inclusion}, it suffices by \cite[Lem.~1.6]{Sto2013a} to argue that $\mathcal{X} \subseteq \operatorname{Filt}\text{-}\mathcal{S}_{\mathcal{L}}$ holds. By assumption, each module $X(q)$ is free, so to finish the proof we must show that the object $\Sq{q}(A^{(I)})$ belongs to $\operatorname{Filt}\text{-}\mathcal{S}_{\mathcal{L}}$ for every $q \in \mathcal{L}$ and every (index) set $I$. However, this is trivial as 
\begin{equation*}
  \Sq{q}(A^{(I)}) 
  \,=\, 
  \Sq{q}(A)^{(I)} 
  \,\in\, 
  \operatorname{Filt}\text{-}\mathcal{S}_{\mathcal{L}} \;,
\end{equation*}
where ``$\in$'' follows as $\operatorname{Filt}\text{-}\mathcal{S}_{\mathcal{L}}$ is closed under coproducts, again by \cite[Lem.~1.6]{Sto2013a}.

The fact that $\operatorname{Filt}\text{-}\mathcal{S}_{\mathcal{L}}$ is contained in ${}^{\perp_1}\lE{}$ follows from \cite[Prop.~3.2(a)]{HJ-TAMS}.
\end{proof}

Note that the ``in particular'' statement in the next result gives a partial converse to the second inclusion in \cite[Prop.~3.2(a)]{HJ-TAMS}. A special case of the statement is that every bounded below complex of projective modules is automatically semi-projective; this is of course well-known, see for instance \cite[Example~5.5]{MR3279365}.

\begin{cor}
  \label{cor:filt-description-summand}
  As in \prpref{filt-description}, consider the set \mbox{$\mathcal{S}_{\mathcal{L}} = \{\Sq{q}(A) \ | \ q \in \mathcal{L} \}$}. For every $X \in \lMod{Q,A}$ the following conditions are equivalent:
\begin{eqc}
\item $X$ is a direct summand of an object in $\operatorname{Filt}\text{-}\mathcal{S}_{\mathcal{L}}$.
\item $\supp{X} \subseteq \mathcal{L}$ and $X(q) \in \lPrj{A}$ for every $q \in \mathcal{L}$.
\end{eqc}  
In particular, every object $X$ that satisfies condition \eqclbl{ii} belongs to ${}^{\perp_1}\lE{}$.
\end{cor}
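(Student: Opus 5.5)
The implication (i)$\Rightarrow$(ii) is the easy direction. Suppose $X \oplus X' = F$ with $F \in \operatorname{Filt}\text{-}\mathcal{S}_{\mathcal{L}}$. By \prpref{filt-description} we have $\supp{F} \subseteq \mathcal{L}$ and $F(q)$ is a free $A$-module for every $q$. Since $X(q)$ is a direct summand of $F(q)$, we get $\supp{X} \subseteq \supp{F} \subseteq \mathcal{L}$ and $X(q) \in \lPrj{A}$ for every $q \in \mathcal{L}$.

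For (ii)$\Rightarrow$(i), I would use an Eilenberg swindle pointwise, but functorially in $Q$. The naive idea of choosing complements $P_q$ with $X(q)\oplus P_q$ free does not assemble into a $Q$-module, so I would work inside the class of objects supported in $\mathcal{L}$ with projective values. Put $Y = X^{(\mathbb{N})}$, the countable coproduct in $\lMod{Q,A}$. Coproducts are computed pointwise, so $\supp{Y} = \supp{X} \subseteq \mathcal{L}$ and $Y(q) = X(q)^{(\mathbb{N})}$. For each $q$, Eilenberg's trick shows that $X(q)^{(\mathbb{N})} \oplus G_q$ is free whenever $X(q) \oplus G_q$ is free. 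This is still not enough, because the complement must be a $Q$-module. So I would instead consider $Z = X \oplus \bigoplus_{q\in\mathcal{L}} \Sq{q}(A)^{(\mathbb{N})}$, or more cleverly $Z = X \oplus W$, where $W$ is a coproduct of stalks $\Sq{q}(P_q^{(\mathbb{N})})$ with $P_q$ a projective complement of $X(q)$.

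The key observation is that the condition in \prpref{filt-description} is only pointwise. Any object supported in $\mathcal{L}$ with free values lies in $\operatorname{Filt}\text{-}\mathcal{S}_{\mathcal{L}}$, irrespective of the structure maps. I would take $W = \bigoplus_{q\in\mathcal{L}} \Sq{q}(G_q)$, where $G_q$ is chosen with $X(q) \oplus G_q$ free; this is possible by Eilenberg's trick, for example $G_q = P_q \oplus (X(q)\oplus P_q)^{(\mathbb{N})}$, or simply a complement making the sum free. Since $\Sq{q}(M)$ has support $\{q\}$ and value $M$ at $q$, we get $(X\oplus W)(p) = X(p) \oplus G_p$ for each $p \in \mathcal{L}$, and this is free. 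Also $\supp{(X\oplus W)} \subseteq \mathcal{L}$. Hence $X \oplus W \in \operatorname{Filt}\text{-}\mathcal{S}_{\mathcal{L}}$ by \prpref{filt-description}, and $X$ is a direct summand of it.

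The only point needing care is that $\Sq{q}(M)(p) = 0$ for $p \neq q$ and $\Sq{q}(M)(q) = M$, as used in the proof of \prpref{filt-description} via \cite[Lem.~7.10 and Prop.~7.15]{HJ-JLMS}. I expect this to be the main, though mild, obstacle. It requires the stalk functor to be the one with no nonzero radical morphisms, which holds since $Q$ is acyclic.

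The final statement follows because ${}^{\perp_1}\lE{}$, as the left half of a cotorsion pair, is closed under direct summands, and \prpref{filt-description} gives $\operatorname{Filt}\text{-}\mathcal{S}_{\mathcal{L}} \subseteq {}^{\perp_1}\lE{}$.
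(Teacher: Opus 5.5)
Your proof is correct, but for \eqclbl{ii}$\Rightarrow$\eqclbl{i} it takes a different route from the paper.

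\textbf{The paper's route.} It first writes $X\in\operatorname{Filt}\text{-}\{S_q(X(q))\mid q\in\mathcal{L}\}$ using Proposition~\ref{prp:filtration}. It then invokes \cite[Prop.~2.9(1)]{Sto2013a}: the class $\mathcal{F}$ of direct summands of objects of $\operatorname{Filt}\text{-}\mathcal{S}_{\mathcal{L}}$ is again deconstructible, hence closed under transfinite extensions. This reduces everything to the observation that each $S_q(X(q))$ is a summand of $S_q(A^{(I)})$.

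\textbf{Your route.} You pad $X$ with $W=\bigoplus_{q\in\mathcal{L}}S_q(G_q)$, where $X(q)\oplus G_q$ is free. Because stalks have no nonzero structure maps, the pointwise complements assemble into a $Q$-module. Then $X\oplus W$ is supported in $\mathcal{L}$ with free values. The converse half of Proposition~\ref{prp:filt-description}, which already packages the filtration argument, places it in $\operatorname{Filt}\text{-}\mathcal{S}_{\mathcal{L}}$.

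\textbf{Comparison.} Your argument is shorter, avoids the summand-deconstructibility result altogether, and exhibits an explicit complement. The paper's argument is more formal, and would still apply where membership in the filtered class is not detected pointwise.

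\textbf{Minor points.} You can drop the detours through $X^{(\mathbb{N})}$ and Eilenberg's swindle. Any $G_q$ with $X(q)\oplus G_q$ free does the job, and such a $G_q$ exists because $X(q)$ is projective. Also, the fact that $S_q(M)$ is $M$ at $q$ and $0$ elsewhere belongs to the standing setup (the paper uses it freely in Lemma~\ref{lem:KY}). It is not a consequence of acyclicity, though attributing it to acyclicity does no harm here.
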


\begin{proof}
  If $X$ is a direct summand of an object $F \in \operatorname{Filt}\text{-}\mathcal{S}_{\mathcal{L}}$ then, in particular, there is an inclusion $\supp{X} \subseteq \supp{F}$ and $X(q)$ is a direct summand of $F(q)$ for every $q \in Q$. Hence the implication \eqclbl{i}$\;\Rightarrow\;$\eqclbl{ii} follows immediately from \prpref{filt-description}.\smallbreak
  
Conversely, assume that $X$ satisfies condition \eqclbl{ii}. As in the proof of \prpref{filt-description} we have $X \in \operatorname{Filt}\text{-}\mathcal{X}$ where $\mathcal{X} = \{ \Sq{q}(X(q)) \ | \ q \in \mathcal{L} \}$. As also noted in that proof, the class
$\operatorname{Filt}\text{-}\mathcal{S}_{\mathcal{L}}$ is deconstructible. Hence {\v{S}}t'ov{\'\i}{\v{c}}ek \cite[Prop.~2.9(1)]{Sto2013a} shows that the class, say, $\mathcal{F}$, of objects that satisfy condition \eqclbl{i} is deconstructible too. To see that $X$ satisfies condition \eqclbl{i} it suffices to show the inclusion $\operatorname{Filt}\text{-}\mathcal{X} \subseteq \mathcal{F}$, and to that end it is by \cite[Lem.~1.6]{Sto2013a} is enough to argue that $\mathcal{X} \subseteq \mathcal{F}$. Thus, we must prove that every object of the form $\Sq{q}(X(q))$ where $q \in \mathcal{L}$ satisfies condition \eqclbl{i}. By assumption, the $A$-module $X(q)$ is projective, and hence it is a direct summans of a free $A$-module $A^{(I)}$ where $I$ is some set. Thus, $\Sq{q}(X(q))$ is a direct summand of $\Sq{q}(A^{(I)})$, and the latter object belongs to $\operatorname{Filt}\text{-}\mathcal{S}_{\mathcal{L}}$, as already demonstrated in the proof of \prpref{filt-description}. Therefore, $\Sq{q}(X(q))$ satisfies condition \eqclbl{i}, as desired.\smallbreak

The ``in particular'' statement follows immediately from the last assertion in \prpref{filt-description} and the fact that class ${}^{\perp_1}\lE{}$ is closed under direct summands in $\lMod{Q,A}$.
\end{proof}

 Saor{\'{\i}}n and {\v{S}}t'ov{\'\i}{\v{c}}ek \cite[Def.~2.6]{SaorinStovicek} introduced the notion of \textbf{efficient} exact categories. In such categories the small object argument guarantees the completeness of cotorsion pairs that are generated by a set of objects. {\v{S}}t'ov{\'\i}{\v{c}}ek in \cite[Def.~3.11]{Sto2013b} also introduces exact categories of Grothendieck type, which is an even stronger notion than  efficiency. Indeed, $\mathscr{C}$ is of \textbf{Grothendieck type} if and only if it is efficient (in the sense of \cite[Def.~3.4]{Sto2013b} which is a stronger version of \cite[Def.~2.6]{SaorinStovicek}) and it is \textbf{deconstructible}, meaning that there is a set (as opposed to a class) $\mathcal{S}$ of objects in $\mathscr{C}$ such that $\mathscr{C} = \operatorname{Filt}\text{-}\mathcal{S}$.

\begin{lem}
  \label{lem:perp-E-is-efficient}
  The Frobenius exact category ${}^{\perp_1}\lE{}$ is of Grothendieck type.
\end{lem}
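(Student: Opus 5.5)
We have to show that ${}^{\perp_1}\lE{}$, with the exact structure inherited from $\lMod{Q,A}$, is efficient in the sense of \cite[Def.~3.4]{Sto2013b} and deconstructible. The plan is to use the two facts recalled in the preliminaries. First, $\lMod{Q,A}$ is a Grothendieck category \cite[Prop.~3.12]{HJ-JLMS}. Second, $({}^{\perp_1}\lE{},\lE{})$ is a complete hereditary cotorsion pair generated by a set \cite[Thms.~4.4, 5.5 and 5.9]{HJ-JLMS}. From these we want to deduce the Grothendieck-type property from a general result of {\v{S}}t'ov{\'\i}{\v{c}}ek. The left class of a cotorsion pair generated by a set in a Grothendieck category is deconstructible, and deconstructible subcategories closed under extensions inherit an exact structure of Grothendieck type.

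\emph{Deconstructibility.} Let $\mathcal{G}$ be the generating set of the cotorsion pair $({}^{\perp_1}\lE{},\lE{})$, so $\lE{}=\mathcal{G}^{\perp_1}$. We may enlarge $\mathcal{G}$ by a generator of $\lMod{Q,A}$; for example, add the representable projectives $Q(q,-)\otimes_k A$, which lie in ${}^{\perp_1}\lE{}$. The Eklof--Trlifaj theorem in Grothendieck categories then gives
\[
{}^{\perp_1}\lE{} \;=\; \operatorname{add}\bigl(\operatorname{Filt}\text{-}\mathcal{G}\bigr),
\]
that is, the class of direct summands of $\mathcal{G}$-filtered objects. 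By \cite[Prop.~2.9(1)]{Sto2013a}, the class of summands of a deconstructible class is deconstructible. Hence there is a set $\mathcal{S}$ with ${}^{\perp_1}\lE{}=\operatorname{Filt}\text{-}\mathcal{S}$. The filtration can be taken inside ${}^{\perp_1}\lE{}$ itself, since that class is closed under extensions and under transfinite extensions (Eklof's lemma).

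\emph{Efficiency.} Here we check the axioms of \cite[Def.~3.4]{Sto2013b} directly, or better, cite \cite[Thm.~3.16]{Sto2013b} (or the corresponding statement there). That result says that if $\mathcal{F}$ is a deconstructible class in a Grothendieck category that is closed under extensions and contains a generator, then $\mathcal{F}$ with the induced exact structure is of Grothendieck type. To use it we need the following.
\begin{itemize}
\item ${}^{\perp_1}\lE{}$ is closed under extensions, as any left Ext-orthogonal class is. Hence it is an exact category with conflations the short exact sequences of $\lMod{Q,A}$ having all terms in the class.
\item ${}^{\perp_1}\lE{}$ contains a generating set of $\lMod{Q,A}$, namely the projectives.
\item ${}^{\perp_1}\lE{}$ is closed under transfinite compositions of inflations, with colimits computed in $\lMod{Q,A}$. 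This follows from Eklof's lemma.
\end{itemize}
Transfinite compositions of inflations are then inflations, which gives the needed axiom. The small object argument condition follows from the Grothendieck ambient category together with deconstructibility.

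The main obstacle is matching hypotheses precisely. The stronger efficiency notion requires the exact category to have a generator and an appropriate smallness condition, and it requires that direct limits of inflations (colimits computed in the ambient category) are again inflations in ${}^{\perp_1}\lE{}$. I would settle this by invoking {\v{S}}t'ov{\'\i}{\v{c}}ek's theorem that deconstructible, extension-closed classes containing a generator in a Grothendieck category are of Grothendieck type, rather than verifying smallness by hand. The only genuinely new input is then that ${}^{\perp_1}\lE{}$ is deconstructible, which follows from its being the left half of a cotorsion pair generated by a set.
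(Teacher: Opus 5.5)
Your proposal is correct and follows the paper's route. You obtain deconstructibility of ${}^{\perp_1}\lE{}$ from the cotorsion pair $({}^{\perp_1}\lE{},\lE{})$ being generated by a set enlarged by a projective generator; the paper cites \cite[Prop.~1.7 and Rem.~1.8]{Sto2013a} for $\mathcal{S}\cup\{G\}$, which is your Eklof--Trlifaj plus summand-closure argument. You then conclude with \cite[Thm.~3.16]{Sto2013b}, as the paper does.

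One correction concerns the hypotheses you attribute to that theorem. The paper checks closure under direct summands (retracts), which gives weak idempotent completeness and holds trivially for a left $\operatorname{Ext}^1$-orthogonal class. Extension-closure plus a generator does not suffice: free modules over $\Bbbk\times\Bbbk$ are deconstructible, extension-closed and contain a generator, but are not weakly idempotent complete. So state retract-closure explicitly instead of sketching the efficiency axioms by hand.
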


\begin{proof}
  Clearly, the class ${}^{\perp_1}\lE{}$ is closed under direct summands (= retracts) in $\lMod{Q,A}$. Thus, to prove the assertion, it suffices by \cite[Thm.~3.16]{Sto2013b} to prove that ${}^{\perp_1}\lE{}$ is deconstructible.  Recall from \cite[Thm.~5.5]{HJ-JLMS} that $({}^{\perp_1}\lE{},\lE{})$ is a cotorsion pair generated by some set $\mathcal{S}$; that is, $\mathcal{S}^{\perp_1} = \lE{}$. Note that the Grothendieck category $\lMod{Q,A}$ has a projective generator, e.g. $G = \bigoplus_{q \in Q}\Fq{q}(A)$ by \cite[Prop.~3.12(a)]{HJ-JLMS}. As the set \mbox{$\mathcal{S} \cup \{G\}$} also generates the cotorsion pair $({}^{\perp_1}\lE{},\lE{})$, it follows from \cite[Prop.~1.7 and Rem.~1.8]{Sto2013a} that ${}^{\perp_1}\lE{}$ is deconstructible.
\end{proof}

\begin{thm}
  \label{thm:cotorsion}
  In the Frobenius exact category ${}^{\perp_1}\lE{}$, the cotorsion pair $(\mathcal{X}_\mathcal{L},\mathcal{Y}_\mathcal{L})$ generated by the set $\mathcal{S}_{\mathcal{L}} = \{\Sq{q}(A) \ | \ q \in \mathcal{L} \}$ can be described as follows:
\begin{prt}

\item An object $X \in {}^{\perp_1}\lE{}$ belongs to $\mathcal{X}_\mathcal{L}$ if and only if $X$  lies in $\operatorname{Add}(\lPrj{Q,A},\operatorname{Filt}\text{-}\mathcal{S}_\mathcal{L})$.

\item An object $Y \in {}^{\perp_1}\lE{}$ belongs to $\mathcal{Y}_\mathcal{L}$ if and only if $\cH[1]{q}(Y)=0$ for every $q \in \mathcal{L}$.
\end{prt}  
Moreover, the cotorsion pair $(\mathcal{X}_\mathcal{L},\mathcal{Y}_\mathcal{L})$ in ${}^{\perp_1}\lE{}$ is complete.
\end{thm}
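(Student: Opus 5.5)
Completeness comes first, since part (a) depends on it. By \lemref{perp-E-is-efficient}, ${}^{\perp_1}\lE{}$ is of Grothendieck type, hence efficient. So by the small object argument of Saor{\'{\i}}n--{\v{S}}t'ov{\'\i}{\v{c}}ek \cite[Thm.~2.13]{SaorinStovicek} the cotorsion pair generated by the set $\mathcal{S}_\mathcal{L}$ is complete. Moreover, the left class $\mathcal{X}_\mathcal{L}$ consists exactly of the direct summands of objects filtered by $\mathcal{S}_\mathcal{L}$ together with a generating set. In ${}^{\perp_1}\lE{}$ the projectives are $\lPrj{Q,A}$, which form a generating class, and the generator $G=\bigoplus_{q}\Fq{q}(A)$ of \lemref{perp-E-is-efficient} lies in it. Hence $\mathcal{X}_\mathcal{L}$ is the class of summands of objects in $\operatorname{Filt}\text{-}(\mathcal{S}_\mathcal{L}\cup\{G\})$.

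For (a), the inclusion ``$\supseteq$'' is clear. Projectives lie in every left class, $\operatorname{Filt}\text{-}\mathcal{S}_\mathcal{L}\subseteq\mathcal{X}_\mathcal{L}$ by Eklof's lemma, and left classes are closed under coproducts and summands. For ``$\subseteq$'', I would show that an object filtered by $\mathcal{S}_\mathcal{L}\cup\{G\}$ splits as a projective plus an $\mathcal{S}_\mathcal{L}$-filtered object. The idea is to push every copy of $G$ to the bottom of the filtration: the extension of $\Fq{q}(A)$ over anything splits because $G$ is projective. Concretely, I would use Hill's lemma / the reordering technique of \cite{Sto2013a} to produce a short exact sequence $0\to P\to X\to F\to 0$ with $P\in\operatorname{Add}G$ and $F\in\operatorname{Filt}\text{-}\mathcal{S}_\mathcal{L}$. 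A cleaner route is to note that every conflation $P\rightarrowtail X\twoheadrightarrow F$ in a Frobenius category with $P$ projective-injective splits. Then $X\cong P\oplus F$, and summands are handled via \corref{filt-description-summand}.

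For (b), the right class is $\mathcal{S}_\mathcal{L}^{\perp_1}$ computed inside ${}^{\perp_1}\lE{}$. Since the exact structure is inherited from $\lMod{Q,A}$, this is $\Ext{Q,A}{1}{\Sq{q}(A)}{Y}=0$ for $q\in\mathcal{L}$. I would then invoke the identification from \cite{HJ-JLMS,HJ-TAMS} of the homology functor $\cH[1]{q}$ with $\Ext{Q,A}{1}{\Sq{q}(A)}{-}$, or with $\Hom{\QSD{Q}{A}}{\Sq{q}(A)}{\Sigma(-)}$, which agrees with Ext$^1$ on objects where $\Sq{q}(A)$ is cofibrant. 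Here I use that $\Sq{q}(A)\in{}^{\perp_1}\lE{}$ by \prpref{filt-description}, so Ext$^1$ in $\lMod{Q,A}$ equals stable Hom into $\Sigma Y=\Omega^{-1}Y$. This yields (b) directly, with the equivalence holding for each $q$ individually.

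The main obstacle is the reduction in (a): controlling transfinite filtrations that mix $G$ and $\mathcal{S}_\mathcal{L}$. I would first try the splitting-by-projectivity argument. If it does not extend through the limit steps, I would instead take a special precover $0\to B\to X'\to X\to 0$ with $X'\in\operatorname{Filt}\text{-}\mathcal{S}_\mathcal{L}$, obtained by filtering with $\mathcal{S}_\mathcal{L}$ plus projectives. I would then use $X\in\mathcal{X}_\mathcal{L}$ to split it off.
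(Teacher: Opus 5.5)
Your plan is correct and is essentially the paper's. Grothendieck type gives efficiency, and the Saor\'{\i}n--\v{S}t'ov\'{\i}\v{c}ek small object argument gives completeness and a description of $\mathcal{X}_\mathcal{L}$. A conflation in ${}^{\perp_1}\lE{}$ whose first term is projective-injective splits. Part (b) is the Holm--J{\o}rgensen identification of $\operatorname{Ext}^1_{Q,A}(\Sq{q}(A),-)$ with $\cH[1]{q}$.

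The one difference is which description of the left class you start from. You use the generating-set form, summands of objects in $\operatorname{Filt}\text{-}(\mathcal{S}_\mathcal{L}\cup\{G\})$, which forces you to untangle a mixed filtration. The paper uses the enough-projectives form \cite[Cor.~2.15(3)]{SaorinStovicek}. It says directly that $X\in\mathcal{X}_\mathcal{L}$ if and only if $X$ is a summand of some $C$ in a conflation $P\rightarrowtail C\twoheadrightarrow F$ with $P\in\lPrj{Q,A}$ and $F\in\operatorname{Filt}\text{-}\mathcal{S}_\mathcal{L}$. So only your splitting step is needed. Summands then lie in $\operatorname{Add}$ by definition, with no appeal to the corollary.

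The step you flag as the main obstacle does go through, and it needs no Hill's lemma. Take an $(\mathcal{S}_\mathcal{L}\cup\{G\})$-filtration $(X_\alpha)$ of $X$ and build $P_\alpha\subseteq X_\alpha$ as follows:
\begin{itemize}
\item if $X_{\alpha+1}/X_\alpha\in\mathcal{S}_\mathcal{L}$, set $P_{\alpha+1}=P_\alpha$;
\item if the quotient is $G$, choose a complement $X_{\alpha+1}=X_\alpha\oplus G'$ (the extension splits since $G$ is projective) and set $P_{\alpha+1}=P_\alpha\oplus G'$;
\item at limits, set $P_\beta=\bigcup_{\alpha<\beta}P_\alpha$.
\end{itemize}
The modular law and AB5 give $P_\gamma\cap X_\alpha=P_\alpha$ for all $\gamma\ge\alpha$. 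Hence each map $X_\alpha/P_\alpha\to X_{\alpha+1}/P_{\alpha+1}$ is either an isomorphism or a monomorphism with cokernel in $\mathcal{S}_\mathcal{L}$. Exactness of direct limits handles the limit steps. So, after discarding repetitions, $X/P\in\operatorname{Filt}\text{-}\mathcal{S}_\mathcal{L}$, and $P$ is a coproduct of copies of $G$. Your splitting argument then finishes the proof.

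Your fallback, however, is misstated. A special precover $X'\twoheadrightarrow X$ cannot in general have $X'\in\operatorname{Filt}\text{-}\mathcal{S}_\mathcal{L}$, because that would force $\supp{X}\subseteq\mathcal{L}$. This fails, for example, for the projective $X=\Fq{p}(A)$ with $p\in\mathcal{R}$, which lies in $\mathcal{X}_\mathcal{L}$. The correct middle term is an extension of an $\mathcal{S}_\mathcal{L}$-filtered object by a projective, which is exactly the paper's route.
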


\begin{proof}
 We know from \lemref{perp-E-is-efficient} that the exact category ${}^{\perp_1}\lE{}$ is of Gro\-then\-dieck type and hence also efficient. Since ${}^{\perp_1}\lE{}$ is Frobenius, it has enough projectives; in fact, the class of projectives in the exact category ${}^{\perp_1}\lE{}$ is precisely class $\lPrj{Q,A}$ of projectives in the Grothendieck abelian category $\lMod{Q,A}$, see \cite[Thm.~6.5]{HJ-JLMS}. Hence Saor{\'{\i}}n and {\v{S}}t'ov{\'\i}{\v{c}}ek \cite[Cor.~2.15(3)]{SaorinStovicek} shows that the cotorsion pair $(\mathcal{X}_\mathcal{L},\mathcal{Y}_\mathcal{L})$ in ${}^{\perp_1}\lE{}$
 generated by the set $\mathcal{S}_{\mathcal{L}}$ is complete and that an object $X \in {}^{\perp_1}\lE{}$ belongs to $\mathcal{X}_\mathcal{L}$ if and only if it is a direct summand of an object $C \in {}^{\perp_1}\lE{}$ appearing in a short exact sequence \begin{equation}\label{eq:def_of_XL}
 0 \to P \to C \to F \to 0\end{equation}
 with $P \in \lPrj{Q,A}$ and $F \in \operatorname{Filt}\text{-}\mathcal{S}_{\mathcal{L}}$. Notice however, that the short exact sequence \eqref{def_of_XL} is a conflation in the Frobenius exact category ${}^{\perp_1}\mathscr{E}$, thus it splits. We conclude that $X$ lies in $\operatorname{Add}(\lPrj{Q,A},\operatorname{Filt}\text{-}\mathcal{S}_\mathcal{L})$. The converse follows by Corollary \ref{cor:filt-description-summand}.\smallbreak
 
 To prove part \prtlbl{b}, note that the first two equalities below hold by definition while the third one follows from \cite[proof of Thm.~D]{HJ-TAMS}:
\begin{align*}
  \mathcal{Y}_\mathcal{L}
  &\,=\,
  (\mathcal{S}_\mathcal{L})^{\perp_1}
  \\
  &\,=\,
  \{ Y \in {}^{\perp_1}\lE{} \ | \ \Ext{Q,A}{1}{\Sq{q}(A)}{Y}=0 \text{ for every } q \in \mathcal{L} \}
  \\
  &\,=\,
  \{ Y \in {}^{\perp_1}\lE{} \ | \ \cH[1]{q}(Y)=0 \text{ for every } q \in \mathcal{L} \} \;. \qedhere
\end{align*}
\end{proof}

At this point it is important to stress that we actually denote by ${}^{\perp_1}\underline{\lE{}}$ the subcategory $\underline{{}^{\perp_1}\lE{}}$, since the Ext-orthogonality is taken in the model and not in the homotopy category. This abbreviation is solely for aesthetic reasons.

\begin{prp}
  \label{prp:image}
  Consider the cotorsion pair $(\mathcal{X}_\mathcal{L},\mathcal{Y}_\mathcal{L})$ in the Frobenius exact category ${}^{\perp_1}\lE{}$ from \thmref{cotorsion}.
\begin{prt}

\item The image $(\underline{\mathcal{X}}_\mathcal{L},\underline{\mathcal{Y}}_\mathcal{L})$ of the cotorsion pair in the stable category ${}^{\perp_1}\underline{\lE{}}$ is 
\begin{align*}
  \underline{\mathcal{X}}_\mathcal{L}
  &\,=\,
     \left\{ X \in {}^{\perp_1}\underline{\lE{}} \;\left|\mspace{-5mu} 
    \begin{array}{l}
    \text{There exists an object $P$ with $\supp{P} \subseteq \mathcal{L}$ and $P(q) \in \lPrj{A}$} \\
    \text{for every $q \in \mathcal{L}$ and an isomorphism $P \cong X$ in ${}^{\perp_1}\underline{\lE{}}$}
    \end{array}
    \mspace{-8mu}
    \right.
    \right\} ,
  \\  
  \underline{\mathcal{Y}}_\mathcal{L}
  &\,=\,
  \{ Y \in {}^{\perp_1}\underline{\lE{}} \ | \ \cH[1]{q}(Y)=0 \text{ for every } q \in \mathcal{L} \} \;.  
\end{align*}

\item The image $(\mathfrak{X}_\mathcal{L},\mathfrak{Y}_\mathcal{L})$ of the cotorsion pair in the $Q$-shaped derived category $\QSD{Q}{A}$~is
\begin{align*}
  \phantom{XXX}
  \mathfrak{X}_\mathcal{L}
  &\,=\,
    \left\{ X \in \QSD{Q}{A} \;\left|\mspace{-5mu} 
    \begin{array}{l}
    \text{There exists an object $P$ with $\supp{P} \subseteq \mathcal{L}$ and $P(q) \in \lPrj{A}$} \\
    \text{for every $q \in \mathcal{L}$ and an isomorphism $P \cong X$ in $\QSD{Q}{A}$}
    \end{array}
    \mspace{-8mu}
    \right.
    \right\} ,
  \\  
  \mathfrak{Y}_\mathcal{L}
  &\,=\,
  \{ Y \in \QSD{Q}{A} \ | \ \cH[1]{q}(Y)=0 \text{ for every } q \in \mathcal{L} \} \;.  
\end{align*}
\end{prt}  
\end{prp}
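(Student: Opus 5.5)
The plan is to deduce part (a) directly from \thmref{cotorsion} and then transport it along the triangle equivalence ${}^{\perp_1}\underline{\lE{}} \simeq \QSD{Q}{A}$ to obtain part (b).

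For $\underline{\mathcal{X}}_\mathcal{L}$, let $X \in \mathcal{X}_\mathcal{L}$. By \thmref{cotorsion}(a), $X$ is a direct summand of $P' \oplus F$ with $P' \in \lPrj{Q,A}$ and $F \in \operatorname{Filt}\text{-}\mathcal{S}_\mathcal{L}$. Write $X \oplus X' \cong P' \oplus F$. In the stable category $P'$ vanishes, so $X$ is a direct summand of $F$ there. Then $X \oplus X'$ is stably isomorphic to $F$. This is not yet enough, since we need $X$ itself to be isomorphic to an object $P$ satisfying condition \eqclbl{ii} of \corref{filt-description-summand}. The fix is to work with $\mathcal{X}_\mathcal{L}$ directly. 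It is closed under summands, and its objects are summands of objects $P' \oplus F$. Let $\pi\colon P'\oplus F \to F$ be the projection and restrict it along the split inclusion $X \to P'\oplus F$. The cleaner route is to apply \corref{filt-description-summand} to a summand of $F$: since $X$ is a summand of $P'\oplus F$ in $\lMod{Q,A}$, the object $X \oplus P''$ is a summand of $F \oplus P'\oplus P''$ for a suitable projective $P''$. Alternatively, use the Eilenberg swindle: choose a free $P''$ such that $P' \oplus P'' \cong P''$ and $P''$ is a summand of some object in $\operatorname{Filt}$. The simplest choice, however, avoids the swindle entirely. A direct summand $X$ of $P'\oplus F$ is determined up to stable isomorphism by an idempotent $e$ of $P'\oplus F$. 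Stably, $e$ induces an idempotent $\underline{e}$ on $F$. Because idempotents split in ${}^{\perp_1}\underline{\lE{}}$ (it has coproducts), the image of $\underline{e}$ is a stable summand of $F$, but again it need not satisfy (ii) on the nose.

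I expect this summand issue to be the main obstacle. What I would try first is to show that $\underline{e}$ lifts to an honest idempotent of $F$ modulo projectives, via the known argument that idempotents lift modulo the ideal of maps factoring through projectives. If that fails, the fallback is to show that every object stably isomorphic to a summand of $F$ is stably isomorphic to some $F\oplus F'$-summand using $X\oplus\Omega^{0}$ tricks. Concretely, $X \oplus (\text{complement}) \cong F$ stably, and I would add the complement to obtain the isomorphism $X \oplus Z \oplus X \oplus Z \oplus \cdots$. An Eilenberg swindle in the additive category of objects satisfying (ii), which is closed under countable coproducts, then yields $X \oplus G \cong G$ with $G$ in the class. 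Conversely, if $X$ is stably isomorphic to some $P$ as in (ii), then $P \in \mathcal{X}_\mathcal{L}$ by \corref{filt-description-summand} and \thmref{cotorsion}(a). Since $\underline{\mathcal{X}}_\mathcal{L}$ means the closure under isomorphism of the image, as in \thmref{soarin-stovicek correspondence}, $X$ belongs to it.

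For $\underline{\mathcal{Y}}_\mathcal{L}$, \thmref{cotorsion}(b) gives the description in ${}^{\perp_1}\lE{}$. It then suffices to note that $\cH[1]{q}$ is invariant under stable isomorphism. To see this, observe that $\cH[1]{q}$ vanishes on $\lPrj{Q,A}$, because projectives lie in $\lE{}$ and the $\cH{q}$ vanish on $\lE{}$ (\cite{HJ-JLMS}), and that it is additive. Hence $Y\oplus P_1\cong Y'\oplus P_2$ implies $\cH[1]{q}(Y)\cong\cH[1]{q}(Y')$.

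For part (b), transport part (a) along the equivalence ${}^{\perp_1}\underline{\lE{}}\simeq\QSD{Q}{A}$ of \cite[Thm.~6.5]{HJ-JLMS}. The functor $\cH[1]{q}$ factors through $\QSD{Q}{A}$, since it sends weak equivalences to isomorphisms. Isomorphism classes also correspond, and every object of $\QSD{Q}{A}$ is isomorphic to a cofibrant one.
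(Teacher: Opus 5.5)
Your outline is the same as the paper's. Its proof simply says that (a) follows from \thmref{cotorsion}, \prpref{filt-description} and \corref{filt-description-summand} because projectives vanish stably, and that (b) follows from (a). Your treatment of $\underline{\mathcal{Y}}_\mathcal{L}$, of the easy inclusion for $\underline{\mathcal{X}}_\mathcal{L}$, and of part (b) is fine.

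The gap is the other inclusion for $\underline{\mathcal{X}}_\mathcal{L}$. You correctly notice that $X\in\mathcal{X}_\mathcal{L}$ is a priori only a \emph{stable} summand of some $F\in\operatorname{Filt}\text{-}\mathcal{S}_\mathcal{L}$. \corref{filt-description-summand}, by contrast, is about genuine summands in $\lMod{Q,A}$; the paper's one-line proof passes over this point as well. None of your remedies closes it:
\begin{itemize}
\item Lifting $\underline{e}$ to an idempotent of $F$ would need something like nilpotence or completeness of the ideal of maps factoring through projectives. Nothing of the sort is available here.
\item A projective $P''$ absorbing $P'$ would have to contain $P'$ as a summand. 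But $P'$ is in general not supported in $\mathcal{L}$ (think of $\Fq{q}(A)$ with $q\in\partial\mathcal{L}$), so it is not a summand of anything in $\operatorname{Filt}\text{-}\mathcal{S}_\mathcal{L}$.
\item The swindle only yields $X\oplus G\cong G$ with $G$ in the class. On its own this says nothing about $X$ being stably isomorphic to an object satisfying (ii).
\end{itemize}
So as written, the proposal does not prove (a).

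The missing ingredient is that the class $\underline{\mathcal{C}}$ of objects stably isomorphic to objects satisfying (ii) is closed under cones in ${}^{\perp_1}\underline{\lE{}}$. This is where (L$_1$) enters.
\begin{itemize}
\item Take $f\colon U\to V$ with $U,V$ satisfying (ii). Compute the cone using the inflation $U\rightarrowtail\mathbb{G}(U)$ into a projective from the proof of Theorem~\ref{thm: t-structure}, so $C_f=\operatorname{Coker}(U\to V\oplus\mathbb{G}(U))$.
\item By (L$_1$), $\supp{\mathbb{G}(U)}\subseteq\bigcup_{q\in\supp{U}}\mathsf{N}_{-}(q)\subseteq\mathcal{L}$, hence $\supp{C_f}\subseteq\mathcal{L}$.
\item Since $U\to\mathbb{G}(U)$ is objectwise split, each $C_f(p)$ is a direct summand of the projective $A$-module $V(p)\oplus\mathbb{G}(U)(p)$. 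So $C_f$ satisfies (ii).
\end{itemize}
Condition (ii) is also closed under coproducts, and with that your swindle finishes the argument. Suppose $X\oplus Y\cong F$ stably, and put $W=F^{(\mathbb{N})}$, so that $X\oplus W\cong W$. Consider the split triangle $W\to X\oplus W\to X\xrightarrow{0}\Sigma W$ and compose with an isomorphism $X\oplus W\cong W$. This exhibits $X$ as the cone of a morphism $W\to W$, hence $X\in\underline{\mathcal{C}}$.
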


\begin{proof}
 Since objects in $\lPrj{Q,A}$ are zero in ${}^{\perp_1}\underline{\lE{}}$, part \prtlbl{a} follows from \thmref{cotorsion},   \prpref{filt-description}, and \corref{filt-description-summand}. Part \rqmlbl{b} follows from \rqmlbl{a}.
\end{proof}

\begin{thm}\label{thm: t-structure}
    The cotorsion pair $(\mathcal{X}_\mathcal{L},\mathcal{Y}_\mathcal{L})$ in the Frobenius exact category ${}^{\perp_1}\lE{}$ corresponds to a t-structure $(\Omega\,\underline{\mathcal{X}}_\mathcal{L},\underline{\mathcal{Y}}_\mathcal{L})$ in the stable category ${}^{\perp_1}\underline{\lE{}}$.
\end{thm}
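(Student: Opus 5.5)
By \thmref{soarin-stovicek correspondence}, complete cohereditary cotorsion pairs in a Frobenius exact category correspond bijectively to t-structures $(\underline{\Omega\mathcal{A}},\underline{\mathcal{B}})$ in its stable category. \thmref{cotorsion} already shows that $(\mathcal{X}_\mathcal{L},\mathcal{Y}_\mathcal{L})$ is complete. The plan is therefore to check that it is cohereditary, i.e.\ $\Omega^{-1}\mathcal{X}_\mathcal{L}\subseteq\mathcal{X}_\mathcal{L}$, and then apply the correspondence. It gives the t-structure $(\underline{\Omega\mathcal{X}_\mathcal{L}},\underline{\mathcal{Y}_\mathcal{L}})=(\Omega\,\underline{\mathcal{X}}_\mathcal{L},\underline{\mathcal{Y}}_\mathcal{L})$, since $\Omega$ descends to the stable category and commutes with taking images there.

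To show cohereditarity, I would work on the right-hand class. In a Frobenius category, $\Omega^{-1}\mathcal{X}\subseteq\mathcal{X}$ is equivalent to $\Omega\mathcal{Y}\subseteq\mathcal{Y}$, because dimension shifting gives $\operatorname{Ext}^1(\Omega^{-1}X,Y)\cong\operatorname{Ext}^1(X,\Omega Y)$ modulo projective-injectives. Alternatively, I could show it directly on $\mathcal{X}_\mathcal{L}$. By \thmref{cotorsion}(a) and \corref{filt-description-summand}, and since projective summands vanish stably, it suffices to treat $X$ with $\supp X\subseteq\mathcal{L}$ and $X(q)\in\lPrj{A}$, and to show that some cosyzygy of $X$ is stably isomorphic to an object of the same kind. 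I expect the direct route to be awkward, because injective envelopes in ${}^{\perp_1}\lE{}$ are the objects of $\lPrj{Q,A}$, and these need not be supported in $\mathcal{L}$.

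So I would use the cohomological description of \thmref{cotorsion}(b). $Y\in\mathcal{Y}_\mathcal{L}$ means $\cH[1]{q}(Y)=0$ for all $q\in\mathcal{L}$, and I would show $\Omega Y$ satisfies the same. The functors $\cH[i]{q}$ are the homology functors of the $Q$-shaped derived category, and they satisfy $\cH[i]{q}(\Omega Y)\cong\cH[i+1]{q}(Y)$ (from \cite{HJ-TAMS}: $\cH[i]{q}(Y)\cong\operatorname{Ext}^i(\Sq{q}(A),Y)$ on bifibrant objects, and a short exact sequence $0\to\Omega Y\to P\to Y\to0$ with $P$ projective, hence in $\lE{}$, gives this shift). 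The problem thus reduces to showing that $\cH[1]{q}(Y)=0$ for all $q\in\mathcal{L}$ implies $\cH[2]{q}(Y)=0$ for all $q\in\mathcal{L}$.

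This implication is the main obstacle, and it is where the admissibility of the partition must enter. My first attempt would be to resolve $\Sq{q}(A)$ by a short exact sequence $0\to K\to\Fq{q}(A)\to\Sq{q}(A)\to0$, or more precisely $0\to\Omega\Sq{q}(A)\to\Fq{q}(A)\to\Sq{q}(A)\to 0$. Here $\Fq{q}(A)$ is projective and $\Omega\Sq{q}(A)$ is supported in $\mathsf{N}_-(q)\subseteq\mathcal{L}$ by (L$_1$). One then needs $\Omega\Sq{q}(A)$ in $\operatorname{Add}(\lPrj{Q,A},\operatorname{Filt}\text{-}\mathcal{S}_\mathcal{L})$, using \prpref{filt-description} and \corref{filt-description-summand}. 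For this I must check that $\Omega\Sq{q}(A)$ has projective values and support in $\mathcal{L}$, possibly after stabilising by a projective.

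If this holds, then $\operatorname{Ext}^2(\Sq{q}(A),Y)\cong\operatorname{Ext}^1(\Omega\Sq{q}(A),Y)=0$, because $\Omega\Sq{q}(A)\in\mathcal{X}_\mathcal{L}={}^{\perp_1}\mathcal{Y}_\mathcal{L}$. This yields $\Omega\mathcal{Y}_\mathcal{L}\subseteq\mathcal{Y}_\mathcal{L}$, hence cohereditarity, and the theorem follows from \thmref{soarin-stovicek correspondence}.
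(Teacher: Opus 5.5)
\textbf{There is a genuine gap: the dimension shift goes the wrong way, and the implication you reduce to is false.}

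Your reduction to $\Omega\mathcal{Y}_\mathcal{L}\subseteq\mathcal{Y}_\mathcal{L}$ is legitimate. The shift, however, runs the other way. Apply $\operatorname{Hom}(\Sq{q}(A),-)$ to $0\to\Omega Y\to P\to Y\to 0$ and use $\operatorname{Ext}^{\ge1}(\Sq{q}(A),P)=0$. For $i\ge1$ this gives $\operatorname{Ext}^{i}(\Sq{q}(A),Y)\cong\operatorname{Ext}^{i+1}(\Sq{q}(A),\Omega Y)$, that is, $\cH[i+1]{q}(\Omega Y)\cong\cH[i]{q}(Y)$, as it must be since $\Omega=\Sigma^{-1}$. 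In the lowest degree you get $\cH[1]{q}(\Omega Y)\cong\underline{\operatorname{Hom}}(\Sq{q}(A),Y)$. So what you must prove is that $\cH[1]{p}(Y)=0$ for all $p\in\mathcal{L}$ forces $\underline{\operatorname{Hom}}(\Sq{q}(A),Y)=0$ for all $q\in\mathcal{L}$. That is a degree-$0$ statement.

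The implication you actually set out to prove, from $\cH[1]{q}$ to $\cH[2]{q}$, is false. Take ordinary complexes ($N=2$ in Example~\ref{ex: n-complexes}) with $\mathcal{L}=\{q\le 0\}$. There $\cH[i]{q}(Y)=H^{q+i}(Y)$, so $\mathcal{Y}_\mathcal{L}$ corresponds to $\mathbf{D}^{\ge2}$. The object $Y=\Sq{2}(A)$ has $\cH[1]{q}(Y)=0$ for all $q\le0$, but $\cH[2]{0}(Y)=A\neq0$.

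The lemma you planned to use fails in the same way. The kernel of $\Fq{q}(A)\to\Sq{q}(A)$ is $\mathfrak{r}(q,-)\otimes_k A$, which is supported in $\mathsf{N}_+(q)\smallsetminus\{q\}$, not in $\mathsf{N}_-(q)$. For $q\in\partial\mathcal{L}$ this support meets $\mathcal{R}$ by definition. In the example, $\Omega\Sq{0}(A)=\Sq{1}(A)$, which is not even stably isomorphic to an object of $\mathcal{X}_\mathcal{L}$, since $A[-1]\notin\mathbf{D}^{\le0}$. So "stabilising by a projective" cannot help. Showing $\Omega\Sq{q}(A)\in\mathcal{X}_\mathcal{L}$ would amount to hereditariness, which is the wrong closure property.

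\textbf{The missing idea is to work on the injective side, where (L$_1$) genuinely applies.} Take the object-wise split monomorphism $\Sq{q}(A)\rightarrowtail G_q(A)$ of \cite[Prop.~5.7]{HJ-TAMS}, where $G_q$ is the right adjoint of evaluation at $q$. Its middle term is projective--injective in ${}^{\perp_1}\lE{}$ and supported in $\mathsf{N}_-(q)\subseteq\mathcal{L}$. Hence $\Omega^{-1}\Sq{q}(A)$ has support in $\mathcal{L}$ and projective values, so it lies in $\mathcal{X}_\mathcal{L}$ by \corref{filt-description-summand}. Then $\underline{\operatorname{Hom}}(\Sq{q}(A),Y)\cong\operatorname{Ext}^1(\Omega^{-1}\Sq{q}(A),Y)=0$ for $Y\in\mathcal{Y}_\mathcal{L}$, which completes your route.

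This is exactly the paper's argument. The paper applies the sequence $0\to X\to\mathbb{G}(X)\to\mathbb{C}(X)\to0$, with $\mathbb{G}(X)=\prod_{q\in\supp X}G_q(X(q))$ supported in $\bigcup_{q\in\supp X}\mathsf{N}_-(q)\subseteq\mathcal{L}$, directly to $X\in\mathcal{X}_\mathcal{L}$. So your worry that the direct route is awkward is unfounded: these particular injective hulls do stay inside $\mathcal{L}$.
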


\begin{proof}
    By Saor\'{i}n and {\v{S}}t'ov{\'\i}{\v{c}}ek theorem \cite[Prop.~3.9]{SaorinStovicek} one needs to prove that $(\mathcal{X}_\mathcal{L},\mathcal{Y}_\mathcal{L})$ is a complete and cohereditary cotorsion pair. The completeness of the cotorsion pair is proven in Theorem \ref{thm:cotorsion}.\smallbreak

    To prove that the aforementioned cotorsion pair is also cohereditary, it suffices to show that the $\mathcal{X_{\mathcal{L}}}$ is closed under co-syzygies, that is, $\Omega^{-1}\mathcal{X_{\mathcal{L}}}\subseteq\mathcal{X_{\mathcal{L}}}$. Consider an object $X$ in $\mathcal{X_{\mathcal{L}}}$, then by \cite[Prop.~5.7]{HJ-TAMS} there is a short exact sequence in $\lMod{Q,A}$ (and in particular in ${}^{\perp_1}\lE{}$)
    \[
    0\longrightarrow X\longrightarrow \mathbb{G}(X)\longrightarrow\mathbb{C}(X)\longrightarrow0,
    \]
    which is object-wise split, $\mathbb{G}(X)=\prod_{q\in\supp{X}}G_q(X(q))$ and $\mathbb{C}(X)$ is the cokernel of the morphism $X\rightarrow \mathbb{G}(X)$, that is $\mathbb{C}(X)=\Omega^{-1}(X)$. Moreover, by \cite[Proof~of~Prop.~5.8]{HJ-TAMS}, one has that 
    \[
    \supp{\mathbb{G}(X)}\subseteq\bigcup_{q\in\supp{X}}\mathsf{N}_{-}(q)
    \]
    and by Definition \ref{dfn:L} since every $q\in\supp{X}$ is in $\mathcal{L}$, it follows that $\supp{\mathbb{G}(X)}\subseteq\mathcal{L}$. Assume now that there is a $p\in Q$ such that $\mathbb{C}(X)(p)\neq0$. By the exactness of the split short exact sequence 
    \[
    0\longrightarrow X(p)\longrightarrow \mathbb{G}(X)(p)\longrightarrow\mathbb{C}(X)(p)\longrightarrow0,
    \]
    in $\lMod{A}$, it follows that $\mathbb{G}(X)(p)\neq0$ and hence $\supp{\mathbb{C}(X)}\subseteq\supp{\mathbb{G}(X)}\subseteq\mathcal{L}$. This proves that the cotorsion pair in question is cohereditary and by \cite[Prop.~3.9]{SaorinStovicek}, the pair $(\Omega\,\underline{\mathcal{X}}_\mathcal{L},\underline{\mathcal{Y}}_\mathcal{L})$ forms a t-structure in the stable category ${}^{\perp_1}\underline{\lE{}}$.
\end{proof}

\section{Silting \texorpdfstring{$t$}{t}-structures}\label{sec:tilting}
The t-structures created in the previous Section, under mild assumptions enjoy certain pleasant properties. In particular, one can show that they are in fact induced by certain silting objects. We provide a brief introduction to certain relevant notions for what follows.
\smallbreak

 An object $M$ in $\mathcal{T}$ is called \textbf{silting} (according to Aihara and Iyama \cite[Def.~4.1]{aihara_iyama_silting}) if it is compact, it generates $\mathcal{T}$, in the sense that $M^{\perp_\mathbb{Z}}=0$, and $\Hom{\mathcal{T}}{M}{\Sigma^iM}=0$ for all $i>0$. In a triangulated category $\mathcal{T}$ with arbitrary coproducts, a silting object induces a t-structure $(M^{\perp_{>0}},M^{\perp_{\le0}})$ which we call \textbf{silting t-structure} (see \emph{loc. cit.} Thm. 4.3 and Cor. 4.7). A silting object $M$ is called \textbf{tilting} if $\operatorname{Add}(M)\subseteq\mathcal{H}_M$, where the latter denotes the heart of the silting t-structure. Later, \cite{nicolas2019silting} and \cite{psaroudakis2018realisation} defined silting objects in triangulated categories based on silting t-structures as their defining property and without requiring the object to be compact. Often in the literature these objects are called \textbf{large silting objects}. We will work with the definition of Aihara and Iyama.

\begin{dfn}\label{dfn: boundary}
   Let $\mathcal{L},\mathcal{R}$ be an admissible partition of $Q$. We define the \textbf{boundary} of $\mathcal{L}$ (respectively of $\mathcal{R}$) in the following sense
    \[
    \begin{aligned}
    \partial\mathcal{L}&:=\{q\in\mathcal{L}\,|\, \text{There exists some object } p\in\mathcal{R} \text{ such that } Q(q,p)\neq0\}\\
    \partial \mathcal{R}&:=   \{q\in\mathcal{R}\,|\, \text{There exists some object } p\in\mathcal{L} \text{ such that } Q(p,q)\neq0\}
    \end{aligned}
    \]
\end{dfn}

Notice that by the assumptions of the Setup \cite[Setup~2.1]{HJ-Abel}, mainly the local boundedness and the nilpotency of the pseudo-radical, imply that the boundaries of $\mathcal{L}$ and $\mathcal{R}$ are both finite. \smallbreak

Fix a ground field $\Bbbk$. We consider that full subcategory $\partial\mathcal{L}$ of $Q$ whose objects are the objects of the boundary of $\mathcal{L}$ and the morphisms are the same as in $Q$, restricted to the objects of $\partial\mathcal{L}$. From this point onwards, we assume also that $Q$ is the path category of a quiver, possibly modulo certain relations, and thus $\partial\mathcal{L}$ is the path category of a finite sub-quiver of $Q$. Consider 
\[
\lMod{\partial\mathcal{L}}:=\operatorname{Fun}_\Bbbk(\partial\mathcal{L},\, 
\lMod{\Bbbk}),
\]
to be the category of representations of $\partial\mathcal{L}$ with values in $\lMod{\Bbbk}$. 

\begin{lem}
    The category $\lMod{\partial\mathcal{L}}$ is equivalent to the module category $\lMod{\Lambda}$ for a finite-dimensional $\Bbbk$-algebra $\Lambda$. In particular, $\Lambda$ is of finite global dimension.
\end{lem}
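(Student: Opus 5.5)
The plan is to identify $\lMod{\partial\mathcal{L}}$ with modules over a bound quiver algebra, and then to use acyclicity of $Q$ to bound the global dimension.

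\emph{Finiteness.} By the remark after \dfnref[Definition~]{boundary}, the set $\partial\mathcal{L}$ is finite. Since $Q$ is Hom-finite over $\Bbbk$, each $Q(p,q)$ with $p,q\in\partial\mathcal{L}$ is finite dimensional. Hence the algebra
\[
\Lambda \,=\, \bigoplus_{p,q\in\partial\mathcal{L}} Q(p,q)
\]
is a finite-dimensional $\Bbbk$-algebra. Its multiplication is composition in $Q$, with $\Lambda(p,q)$ set to zero when composition is not defined. The orthogonal idempotents $e_q=1_q$ for $q\in\partial\mathcal{L}$ sum to the unit $1_\Lambda$.

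\emph{Equivalence.} This is the standard equivalence between $\Bbbk$-linear functors on a category with finitely many objects and modules over its total algebra. A functor $M\colon\partial\mathcal{L}\to\lMod{\Bbbk}$ is sent to $\bigoplus_{q} M(q)$, with $\Lambda$ acting through the maps $M(f)$. Conversely, a $\Lambda$-module $N$ is sent to the functor $q\mapsto e_qN$, with $f\in Q(p,q)$ acting as left multiplication $e_pN\to e_qN$. Checking that these are mutually quasi-inverse is routine, using $\sum e_q=1$. (Equivalently, $\partial\mathcal{L}$ is the path category of a finite quiver modulo an admissible ideal, and $\Lambda=\Bbbk\Gamma/I$.)

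\emph{Finite global dimension.} This is the step that needs an argument. Because $Q$ is acyclic, the full subcategory $\partial\mathcal{L}$ has no oriented cycles of nonzero radical morphisms. By nilpotency of the pseudoradical, its endomorphism rings are $Q(q,q)=\Bbbk\cdot 1_q$, so each is local with residue field $\Bbbk$; this uses the setup of \cite[Setup~2.1]{HJ-Abel} and that $\Bbbk$ is algebraically closed. It follows that the Gabriel quiver of $\Lambda$ is a finite directed acyclic quiver, so we may order the vertices $q_1,\dots,q_n$ such that $\Lambda(q_i,q_j)\neq0$ with $i\neq j$ forces $i<j$.

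We then argue by induction along this order. Let $S_i$ be the simple module at $q_i$. Its projective cover has radical supported on vertices $q_j$ with $j>i$. Therefore $\operatorname{pd}S_i\le 1+\max_{j>i}\operatorname{pd}S_j$, while a sink satisfies $\operatorname{pd}S_n=0$. This gives $\operatorname{pd}S_i\le n-i$, and hence $\operatorname{gldim}\Lambda\le n-1<\infty$, since the global dimension of a finite-dimensional algebra is the supremum of the projective dimensions of its simples.

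The main obstacle is justifying that acyclicity of $Q$ in the sense of \cite[Dfn.~5.13]{HJ-TAMS} really gives directedness of the full subcategory $\partial\mathcal{L}$, including that its endomorphism rings are just $\Bbbk$. I would first try to derive this directly from the definition together with the nilpotency of $\mathfrak{r}$. If that fails, I would instead use the filtration argument of \lemref{KY}: condition (L$_2$) forbids infinite radical chains, and this bounds the lengths of radical paths in $\partial\mathcal{L}$.
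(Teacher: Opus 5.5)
Your proposal is correct and follows essentially the paper's route: finiteness and acyclicity of $\partial\mathcal{L}$ make $\Lambda$ a finite-dimensional directed algebra with $\operatorname{gldim}\Lambda\le|\partial\mathcal{L}|-1$. The paper presents $\Lambda$ as $\Bbbk(\partial\mathcal{L})/I$ with $I$ admissible and cites Derksen--Weyman for the bound, whereas you use the total algebra of the full subcategory and prove the bound yourself by induction on simples along a topological order.

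The step you flag as the main obstacle is immediate. A nonzero element of $\mathfrak{r}(q,q)$, or a cycle of nonzero radical morphisms inside $\mathcal{L}$, could be traversed repeatedly to give an infinite chain forbidden by (L$_2$); the paper likewise attributes acyclicity of $\partial\mathcal{L}$ to (L$_2$). Hence $Q(q,q)=\Bbbk\cdot 1_q$ by the pseudoradical decomposition, and neither nilpotency nor algebraic closedness of $\Bbbk$ is needed for this.
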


\begin{proof}
By the conditions imposed to $\mathcal{L}$ in Definition \ref{dfn:L}, one has that $\partial\mathcal{L}$ is a finite (since $Q$ is locally bounded) acyclic (Definition \ref{dfn:L}(2)) quiver and thus its category of representations is equivalent to the module category of the finite-dimensional $\Bbbk$-algebra $\Lambda:=\Bbbk(\partial\mathcal{L})/I$, where $I$ is the \emph{admissible} ideal of relations obtained by restricting the relations of $Q$ to $\partial\mathcal{L}$. For the admissibility of $I$ it suffices to notice that since $\partial\mathcal{L}$ is finite and acyclic there exists a maximal length of a path in $\partial\mathcal{{L}}$. Consequently, denoting by $J$ the arrow ideal of the path algebra $\Bbbk(\partial\mathcal{L})$, we conclude that $J$ is nilpotent for some integer $m\ge2$. In particular, one has  $J^m\subseteq I$. On the other hand, $I$ is obtained by restricting the relations of $Q$ to the subquiver $\partial\mathcal{L}$. Since the relations of $Q$ lie in the square of the radical ideal every relation has length at least two, meaning $I\subseteq J^2$.\smallbreak

Finally, since $\partial\mathcal{L}$ is a finite acyclic quiver with $\ell$ vertices for some positive integer $\ell$, by \cite[Prop.~3.1.7]{Derksen_Weyman_quivers} one has that the projective dimension of any $\Lambda$-module $M$ (and thus also for the simple modules) is finite. More concretely, one has for every $\Lambda$-module $M$, $\operatorname{pd}_\Lambda(M)\le\ell-1$. It is now clear that $\Lambda$ has finite global dimension.
\end{proof}

Notice that, the category $\lMod{\partial\mathcal{L}}$ is Krull-Schmidt and one can compute precisely the indecomposable projective representations $P_i$ of $\partial\mathcal{L}$, which are also going to be finitely many.
\
Note that every functor in $\operatorname{Fun}_\Bbbk(\partial\mathcal{L},\, 
\lMod{\Bbbk})$ takes values in projective/injective $\Bbbk$-modules. Denote by 
\[
\vartheta\colon\lMod{\partial\mathcal{L}}\longrightarrow\lMod{Q}
\]
the fully faithful functor that maps a representation $M$ of $\partial\mathcal{L}$ to the representation $X=\vartheta(M)$ of $Q$ with $X(q)=M(q)$ in the vertices $q\in\partial\mathcal{L}$ and $X(q)=0$ elsewhere. We call $\vartheta$ the \textbf{inclusion functor} of $\lMod{\partial\mathcal{L}}$ to $\lMod{Q}$. We are also interested in the functor 
\[
\psi\colon \lMod{\partial\mathcal{L}}\longrightarrow\QSD{Q}{\Bbbk}
\]
defined as the composition of the inclusion functor $\vartheta$ with the canonical functor $\lMod{Q}\longrightarrow\QSD{Q}{\Bbbk}$. The following Lemma is inspired by the ideas of Marks and Vit\'oria in \cite[Lem.~2.4]{marks_vitoria_silting}, however the main idea of the proof is similar. We include the proof for completeness. 

\begin{lem}\label{lem: homological embedding}
    The inclusion functor $\vartheta\colon\lMod{\partial\mathcal{L}}\longrightarrow\lMod{Q}$ is a homological embedding, in the sense that, for every $M, N$ in $\lMod{\partial\mathcal{L}}$ and every $i\ge0$, there is a natural isomorphism
    \[
    \Ext{\partial\mathcal{L}}{i}{M}{N}\,\cong\,\Ext{Q}{i}{\vartheta(M)}{\vartheta(N)}.
    \]
    Moreover, there is a natural isomorphism
    \[
    \Ext{\partial\mathcal{L}}{i}{M}{N}\,\cong\,\Hom{\QSD{Q}{\Bbbk}}{\psi(M)}{\Sigma^i(\psi(N))}
    \]
\end{lem}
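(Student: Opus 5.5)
I would follow the strategy of Marks and Vit\'oria: show that $\vartheta$ is exact, fully faithful, and sends projectives of $\lMod{\partial\mathcal{L}}$ to objects that are $\operatorname{Ext}$-acyclic against the image of $\vartheta$. Then I would pass from $\lMod{Q}$ to $\QSD{Q}{\Bbbk}$ using the results of Section~\ref{sec:t-structures}.

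\emph{Step 1 (Yoneda $\operatorname{Ext}$ via closure).} The functor $\vartheta$ is exact, since kernels and cokernels are computed vertexwise. It is fully faithful. The key structural fact is that its essential image, the representations supported in $\partial\mathcal{L}$, is closed under subobjects, quotients and extensions in $\lMod{Q}$. Closure under extensions holds because supports of extensions lie in the union of the supports. By the standard criterion, the isomorphisms $\Ext{\partial\mathcal{L}}{i}{M}{N}\cong\Ext{Q}{i}{\vartheta M}{\vartheta N}$ then hold for $i=0,1$. For higher $i$, I would rather compare projective resolutions. Take the indecomposable projective $P_q=\partial\mathcal{L}(q,-)$ with $q\in\partial\mathcal{L}$. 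Then $\vartheta(P_q)$ is the quotient of $\Fq{q}(\Bbbk)=Q(q,-)$ obtained by killing all vertices outside $\partial\mathcal{L}$. I would try to show that $\Ext{Q}{i}{\vartheta P_q}{\vartheta N}=0$ for $i\ge1$ and that $\Hom{Q}{\vartheta P_q}{\vartheta N}=N(q)$. Once that is done, applying $\vartheta$ to a projective resolution of $M$ gives a $\Hom{Q}{-}{\vartheta N}$-acyclic resolution of $\vartheta M$, and the first isomorphism follows.

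\emph{Step 2 (main obstacle).} The vanishing in Step 1 is where the combinatorics of the admissible partition must enter. Write $0\to K\to \Fq{q}(\Bbbk)\to\vartheta P_q\to0$, where $K$ is supported on the vertices reached from $q$ that lie outside $\partial\mathcal{L}$. By (L$_1$) and (R$_1$), every nonzero path leaving $\partial\mathcal{L}$ towards vertices not in $\partial\mathcal{L}$ ends in $\mathcal{R}$. Hence $\supp{K}\subseteq\mathcal{R}$, while $\supp{\vartheta N}\subseteq\partial\mathcal{L}\subseteq\mathcal{L}$. I would then use that $\Ext{Q}{j}{K}{\vartheta N}=0$ for all $j\ge0$. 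To prove this, I would resolve $K$ by projectives $\Fq{r}(\Bbbk)$ with $r\in\mathcal{R}$, which is possible by (R$_1$) and the dual of the filtration argument in Proposition~\ref{prp:filtration} using (R$_2$). For such $r$, $\Hom{Q}{\Fq{r}(\Bbbk)}{\vartheta N}=\vartheta N(r)=0$. The long exact sequence then gives the required vanishing for $\vartheta P_q$. I expect this to be the delicate point: one has to make sure the resolution of $K$ really stays inside $\mathcal{R}$, and this needs $Q$ to be the path category of a quiver with relations.

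\emph{Step 3 (passage to $\QSD{Q}{\Bbbk}$).} Since $\vartheta M$ is supported in $\mathcal{L}$ and takes values in $\Bbbk$-vector spaces, which are projective, Corollary~\ref{cor:filt-description-summand} gives $\vartheta M\in{}^{\perp_1}\lE{}$. In the Frobenius category ${}^{\perp_1}\lE{}$, for cofibrant $X$ and any $Y$ in ${}^{\perp_1}\lE{}$, one has
\[
\Hom{\QSD{Q}{\Bbbk}}{X}{\Sigma^iY}\cong\Ext{Q}{i}{X}{Y}\qquad (i\ge1).
\]
To see this, compute $\Sigma^i Y=\Omega^{-i}Y$ using injective-projective objects of $\lPrj{Q,\Bbbk}$. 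Since ${}^{\perp_1}\lE{}$ is closed under extensions and contains the relevant projectives, dimension shifting along cosyzygy sequences gives the identification. For $i=0$, I would note that $\underline{\operatorname{Hom}}$ equals $\operatorname{Hom}$ here. A map $\vartheta M\to\vartheta N$ factoring through a projective $P$ factors through the largest quotient of $P$ supported in $\mathcal{L}$. Since $\Ext{Q}{1}{\vartheta M}{-}$ vanishes on objects supported in $\mathcal{R}$, as in Step 2, and projectives are sent by $\operatorname{Hom}$ into the $\mathcal{R}$-parts, the map must be zero. Alternatively, Step 1 for $i=0$ combined with the $i\ge1$ shift argument applied to $\Omega\vartheta M$ gives the same conclusion. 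Combining this with Step 1 yields $\Ext{\partial\mathcal{L}}{i}{M}{N}\cong\Hom{\QSD{Q}{\Bbbk}}{\psi M}{\Sigma^i\psi N}$, and naturality is clear from the construction.
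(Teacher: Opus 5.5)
Your route to the first isomorphism differs from the paper's. The paper replaces an arbitrary $i$-fold Yoneda extension between $\vartheta(M)$ and $\vartheta(N)$ by an equivalent one whose middle terms are supported in $\partial\mathcal{L}$. You instead show that $\vartheta$ sends projective $\Lambda$-modules to $\Hom{Q}{-}{\vartheta(N)}$-acyclic objects. Your route works and gives more information, namely an explicit $\mathcal{R}$-supported kernel for $\vartheta(P_q)$.

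However, your justification of the key support claim in Step~2 is wrong. The claim is that every $p\in\mathcal{L}$ with $Q(q,p)\neq0$ for some $q\in\partial\mathcal{L}$ again lies in $\partial\mathcal{L}$, i.e.\ $\supp{K}\subseteq\mathcal{R}$. This does not follow from (L$_1$) and (R$_1$). Take the quiver $p\leftarrow q\rightarrow r$ with $\mathcal{L}=\{p,q\}$ and $\mathcal{R}=\{r\}$: both conditions hold and $\partial\mathcal{L}=\{q\}$, yet $p\notin\partial\mathcal{L}$.

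The claim holds here because of the Serre functor $\mathbb{S}$, with $Q(q,p)\cong DQ(p,\mathbb{S}q)$ and $D=\operatorname{Hom}_\Bbbk(-,\Bbbk)$. If $Q(q,r)\neq0$ with $r\in\mathcal{R}$, then $Q(r,\mathbb{S}q)\neq0$, so $\mathbb{S}q\in\mathcal{R}$ by (R$_1$). Hence every $p$ with $Q(q,p)\neq0$ has $Q(p,\mathbb{S}q)\neq0$ and lies in $\partial\mathcal{L}\cup\mathcal{R}$. The paper's map $\xi^{**}\to\xi^{*}$ tacitly uses the same fact.

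The point you flagged as delicate is in fact easy. The canonical epimorphism $\bigoplus_{r\in\supp{K}}\Fq{r}(K(r))\to K$ has domain supported in $\mathcal{R}$ by (R$_1$) alone. So neither (R$_2$) nor a quiver presentation is needed.

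The genuine gap is the case $i=0$ of the second isomorphism. You claim that $\Ext{Q}{1}{\vartheta(M)}{-}$ vanishes on objects supported in $\mathcal{R}$, and this is false. For ordinary complexes ($\delta^2=0$, $\mathcal{L}=\{q\le0\}$, $\partial\mathcal{L}=\{0\}$) there is a non-split sequence $0\to\Sq{1}(\Bbbk)\to\Fq{0}(\Bbbk)\to\Sq{0}(\Bbbk)\to0$. Step~2 only gives vanishing with the $\mathcal{R}$-supported object in the \emph{first} variable.

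Your alternative argument is circular. One has $\underline{\operatorname{Hom}}(X,Y)\cong\Ext{Q}{1}{X}{\Omega Y}$, but computing the right-hand side from $0\to\Omega Y\to P\to Y\to0$ returns exactly $\operatorname{Coker}\bigl(\Hom{Q}{X}{P}\to\Hom{Q}{X}{Y}\bigr)$.

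What you need is that no nonzero map $\vartheta(M)\to\vartheta(N)$ factors through a projective. Any factorisation $\vartheta(M)\to P\to\vartheta(N)$ can be rerouted, by projectivity of $P$, through the canonical epimorphism $\bigoplus_{x\in\partial\mathcal{L}}\Fq{x}(N(x))\to\vartheta(N)$. So it suffices that $\Hom{Q}{\vartheta(M)}{\Fq{x}(\Bbbk)}=0$ for $x\in\partial\mathcal{L}$; direct sums are handled by embedding them in products.

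This vanishing is again Serre duality. We have $\Fq{x}(\Bbbk)=Q(x,-)\cong DQ(-,\mathbb{S}x)$, so $\Hom{Q}{\vartheta(M)}{\Fq{x}(\Bbbk)}\cong D\bigl(\vartheta(M)(\mathbb{S}x)\bigr)$. This is zero because $\mathbb{S}x\in\mathcal{R}$.

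With these two repairs your proof is complete. For $i\ge1$, your dimension shift in ${}^{\perp_1}\mathscr{E}$ uses $\Ext{Q}{j}{X}{P}=0$ for $j\ge1$, $X$ cofibrant and $P$ projective. This is what the paper obtains from bifibrancy and Holm--J{\o}rgensen's Prop.~4.1.
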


\begin{proof}
    It is straightforward to check that the functor $\vartheta$ is fully faithful and exact and thus it induces an injective map $\Ext{\partial\mathcal{L}}{i}{M}{N}\,\longrightarrow\,\Ext{Q}{i}{\vartheta(M)}{\vartheta(N)}$ for any $i>0$ and any $M,N\in\lMod{\partial\mathcal{L}}$. In order to show that this map is also surjective, consider the $i$-Yoneda extension
    \[
    \xi\,\colon=\quad 0\longrightarrow\vartheta(M)\longrightarrow E_1\longrightarrow\cdots\longrightarrow E_i\longrightarrow\vartheta(N)\longrightarrow 0.
    \]

    It suffices to show that up to equivalence in $\Ext{Q}{i}{\vartheta(M)}{\vartheta(N)}$ (see for example \cite{Wei}), one can choose for each $j=1,\ldots,i$, objects $E_j$ in $\lMod{Q}$ that are only supported in $\partial\mathcal{L}$. For this purpose, define $\xi^*$ to be the exact sequence in $\lMod{Q}$ obtain by $\xi$ be setting $E_j(q)=0$ for all $q\in \mathcal{R}$ and $\xi^{**}$ to be the exact sequence obtained by $\xi^*$ by setting $E_j(q)=0$ for all $q\in \mathcal{L}\smallsetminus\partial\mathcal{L}$. One can show that there are maps $\xi\longrightarrow\xi^*\longleftarrow\xi^{**}$ of exact sequences, implying that $\xi$ and $\xi^{**}$ represent the same element in $\Ext{Q}{i}{\vartheta(M)}{\vartheta(N)}$.\smallbreak

    Moreover, every object in $\lMod{\partial\mathcal{L}}$ bi-fibrant with respect to the projective (and injective) model of the $Q$-shaped derived category (see \cite[Thm.~E]{HJ-TAMS}). Therefore, an application of \cite[Prop.~4.1]{HJ-TAMS} together with the natural isomorphism obtained by the first part of the Lemma, gives the desired result.
\end{proof}

From now on we call $\Lambda=\Bbbk(\partial\mathcal{L})/I$ the \textbf{boundary algebra} with respect to the admissible partition $(\mathcal{L},\mathcal{R})$ and we identify $\lMod{\partial\mathcal{L}}$ with $\lMod{\Lambda}$, for simplicity in our notation. Relying on the fact that one can do homological algebra in $\lMod{\Lambda}$ in a compatible way with the homological algebra of $\lMod{Q}$, we prove the following Theorem about the existence of silting objects in $\QSD{Q}{\Bbbk}$.

{\color{black}
\begin{thm}\label{thm:tilting} 
    The functor $\psi\colon \lMod{\Lambda}\longrightarrow\QSD{Q}{\Bbbk}$ maps tilting $\Lambda$-modules to silting objects in $\QSD{Q}{\Bbbk}$.
\end{thm}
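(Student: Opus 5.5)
Let $T$ be a tilting $\Lambda$-module; since $\Lambda$ is finite dimensional of finite global dimension, we take $T$ to be finitely generated (classical) tilting. We must verify the three Aihara--Iyama conditions for $\psi(T)$: compactness, vanishing of positive self-extensions, and generation.

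\emph{Vanishing of positive self-extensions.} This follows at once from \lemref[Lemma~]{ homological embedding}. For $i>0$ we have
\[
\Hom{\QSD{Q}{\Bbbk}}{\psi(T)}{\Sigma^i\psi(T)}\cong\Ext{\Lambda}{i}{T}{T}=0.
\]

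\emph{Compactness.} Since $T\in\operatorname{add}$ of a finite $\Lambda$-resolution, it suffices to show that $\psi(M)$ is compact for every finitely generated $\Lambda$-module $M$. As $\psi$ is exact on short exact sequences and these give triangles in $\QSD{Q}{\Bbbk}$, and $M$ has a finite filtration by simples, it is enough to treat the stalks $\psi(S_q)=\Sq{q}(\Bbbk)$ for $q\in\partial\mathcal{L}$. By \cite{HJ-TAMS}, the functor $\Hom{\QSD{Q}{\Bbbk}}{\Sq{q}(\Bbbk)}{\Sigma^i(-)}$ is computed by $\cH[i]{q}$, which commutes with coproducts; equivalently, the stalk objects form compact generators of $\QSD{Q}{\Bbbk}$. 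Hence $\psi(T)$ is compact.

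\emph{Generation.} This is the main obstacle. Let $Y$ satisfy $\Hom{}{\psi(T)}{\Sigma^iY}=0$ for all $i$. Since $T$ is tilting, $\Lambda$ lies in the thick closure of $T$ in $\mathbf{D}^b(\Lambda)$, and $\psi$ is exact, so $Y\in\psi(\Lambda)^{\perp_\mathbb{Z}}$. Because $\Lambda$ has finite global dimension, every simple $\Lambda$-module is in the thick closure of $\Lambda$, so $Y\in\Sq{q}(\Bbbk)^{\perp_\mathbb{Z}}$ for every $q\in\partial\mathcal{L}$. That is, $\cH[i]{q}(Y)=0$ for all $i$ and all $q\in\partial\mathcal{L}$.

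It remains to propagate this vanishing from the boundary to all vertices. Here I would use the combinatorics of the admissible partition.

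For $p\in\mathcal{L}\smallsetminus\partial\mathcal{L}$, we have $\mathsf{N}_+(p)\subseteq\mathcal{L}$, and $\mathsf{N}_-(p)\subseteq\mathcal{L}$ by (L$_1$). The idea is that the Serre functor/triangle relating $\Sq{p}$ to $\Sq{r}$ for neighbours $r$ (the ``Koszul-type'' resolution of $\Sq{p}(\Bbbk)$ by $\Fq{}$'s, which are zero in $\QSD{Q}{\Bbbk}$) expresses $\Sigma\Sq{p}(\Bbbk)$ in terms of stalks at vertices further along $\mathfrak{r}$-paths. By (L$_2$), inducting along paths toward the boundary terminates, so each $\Sq{p}(\Bbbk)$ with $p\in\mathcal{L}$ lies in $\operatorname{thick}\{\Sq{q}(\Bbbk): q\in\partial\mathcal{L}\}$. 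The dual argument with (R$_1$) and (R$_2$) handles $p\in\mathcal{R}$, propagating from $\partial\mathcal{R}$, which is linked to $\partial\mathcal{L}$ through these same triangles.

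Then $\cH[i]{p}(Y)=0$ for all $p\in Q$, so $Y=0$ by \cite{HJ-TAMS}, since the stalks generate and detect zero. The delicate point is making this induction precise: both that stalks at interior vertices are built from boundary stalks using only finitely many steps (using local boundedness and the acyclicity assumption), and that no infinite chain spoils the argument, which is exactly what conditions (L$_2$) and (R$_2$) are designed to prevent.
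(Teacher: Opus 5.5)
Your proposal is correct and follows essentially the paper's route: self-orthogonality from the homological embedding lemma, then compactness, then generation. For generation you first get the boundary stalks from $\psi(T)$ via the tilting coresolution and finite projective resolutions over $\Lambda$. You then propagate to $\mathcal{L}$ and $\mathcal{R}$ through the sequences $0\to\mathfrak{r}(p,-)\to Q(p,-)\to \Sq{p}(\Bbbk)\to 0$ and their injective duals, whose middle terms vanish in $\QSD{Q}{\Bbbk}$, inducting on chain length via (L$_2$) and (R$_2$); testing against $\psi(T)^{\perp_{\mathbb{Z}}}$ rather than computing $\operatorname{Loc}(\psi(T))$ is only cosmetic.

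Two points to tighten when writing it up. First, transport $\Lambda\in\operatorname{thick}(T)$ to $\QSD{Q}{\Bbbk}$ through the short exact pieces of the coresolution, as the paper does, since $\psi$ is only defined on modules and not on $\mathbf{D}^b(\Lambda)$. Second, make the height of each $p$ finite explicitly: combine (L$_2$) (resp.\ (R$_2$)) with the finiteness of $\mathsf{N}_{+}(p)$ (resp.\ $\mathsf{N}_{-}(p)$) in a K\"onig's lemma argument.
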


\begin{proof}
    Let $T$ be a tilting $\Lambda$-module. In Lemma \ref{lem: homological embedding},
    one has that there are no positive self extensions of $\psi(T)$ in $\QSD{Q}{\Bbbk}$, since $T$ is a tilting $\Lambda$-module. It remains to show that $\psi(T)$ is a compact generator of $\QSD{Q}{\Bbbk}$. Notice that, since $T$ considered as a functor $\partial\mathcal{L}\longrightarrow \lMod{\Bbbk}$ takes finitely many non-zero values in the category of finitely presented $\Bbbk$-vector spaces ${}_\Bbbk\operatorname{prj}$, its image $\psi(T)$ is a compact (in fact perfect in the sense of \cite[Thm.~C]{HJ-TAMS}) object in $\QSD{Q}{\Bbbk}$.\smallbreak 
    
    For the generation, we prove that the smallest localising subcategory that contains the image of $T$ is the whole $\QSD{Q}{\Bbbk}$, that is $\operatorname{Loc}(\psi(T))\,=\,\QSD{Q}{\Bbbk}$. 
    For this purpose we show that the set $\{S_q\in\QSD{Q}{\Bbbk}\,|\,q\in Q\}$ is contained in $\operatorname{Loc}(\psi(T))$, by considering the cases of stalks supported in $\partial\mathcal{L}$, $\mathcal{L}$ and $\mathcal{R}$, respectively.\smallbreak
    
    \noindent\textbf{\textsf{Boundary stalks:}} By the definition of tilting modules, there exists a finite co-resolution 
    \begin{equation}\label{eq:tilting coresol}
    0\longrightarrow\Lambda\longrightarrow T_0\longrightarrow T_1\longrightarrow\cdots\longrightarrow T_n\longrightarrow 0
    \end{equation}
    of the regular module $\Lambda$, where $T_i$ lie in the additive closure $\operatorname{add}(T)$ of $T$, that is they are direct summands of finite direct sums of $T$. Applying the functor $\psi$ to the co-resolution \eqref{tilting coresol}, one can realise the image of the regular module $\psi(\Lambda)$ as a (co)cone of a map between direct summands of $\psi(T)$, thus $\psi(\Lambda)$ lies in $\operatorname{Loc}(\psi(T))$. In particular the image of any projective $\Lambda$-module $P$ lies in $\operatorname{Loc}(\psi(T))$. \smallbreak

    We claim that the image $\psi(M)$ of any finitely presented $\Lambda$-module $M$ in ${}_\Lambda\operatorname{mod}$, lies in $\operatorname{Loc}(\psi(T))$. For this purpose, let $M$ be a finitely presented $\Lambda$-module and let 
    \begin{equation}\label{eq:projresol}
    0\longrightarrow P_m\longrightarrow P_{m-1}\longrightarrow\cdots\longrightarrow P_1\longrightarrow P_0\longrightarrow M\longrightarrow 0\,,
    \end{equation}
    be a projective resolution of $M$. Recall that the global dimension of $\Lambda$ is finite and thus the projective resolution \eqref{projresol} is bounded. Moreover, the resolution \eqref{projresol} of $M$ gives rise to a family of short exact sequences
    \[
    0\longrightarrow\Omega^j(M)\longrightarrow P_{j-1}\longrightarrow\Omega^j(M)\longrightarrow0\,,\quad j=1,\ldots,m\,,
    \]
    where $\Omega^j(M)$, $j=1,\ldots,m$, denotes the $j$-th syzygy of $M$. Notice that, $\Omega^m(M)$ is a projective $\Lambda$-module and thus it lies $\operatorname{Loc}(\psi(T))$ along with the rest of the projective $\Lambda$-modules that appear in the resolution \eqref{projresol}. Inductively, one can show that each syzygy of $M$ and hence also $M$ itself lie in $\operatorname{Loc}(\psi(T))$.
    In particular, the image of every simple $\Lambda$-module lies in $\operatorname{Loc}(\psi(T))$, and thus (up to weak equivalence) stalk functors $S_q$ with $q\in\partial\mathcal{L}$ lie in the localising subcategory as well.\smallbreak

    \noindent\textbf{\textsf{Stalks in} $\mathcal{L}$:} Define a \emph{length function} $h_0\colon \mathcal{L}\to\mathbb{Z}_{\ge0}$ by
    \[
    h_0(q)=\operatorname{max}\{n\ge0\,|\, q=q_0\to q_1\to q_2\to\ldots\to q_n,\text{ with }q_n\in\partial\mathcal{L}\}
    \]
    where $q=q_0\to q_1\to q_2\to\ldots\to q_n$ is a chain of non-zero pseudo-radical morphisms in $\mathcal{L}$ and it is finite by \ref{dfn:L}(2).\smallbreak    
    
    Notice that, $h_0$ takes values in the finite set $\{0,1,\ldots,N-1\}$, where $N$ denotes the nilpotency power of the pseudo-radical of $Q$. Moreover, for every $q$ in the boundary $\partial\mathcal{L}$, the length $h_0(q)=0$. We show that every stalk $S_q$ for $q\in\mathcal{L}$ such that $h_0(q)\le N-1$ lies in $\operatorname{Loc}(\psi(T))$ by induction to $h_0(q)$. The base step $h_0(q)=0$ is already shown above. Define for an integer $0< n \le N-1$ the set
    \[
    \mathcal{L}_0^{<n}=\{q\in\mathcal{L}\,|\, h_0(q)<n\}\,,
    \]
     of all vertices of length at most $n$, and assume that $S_q$ lies in $\operatorname{Loc}(\psi(T))$ for all $q\in\mathcal{L}_0^{<n}$. Consider now all vertices $q_i\in\mathcal{L}$, $i\in I$ for a finite set $I$ (by local boundedness), such that $h_0(q_i)=n$. By \cite[Con.~5.6 and Prop.~5.7]{HJ-TAMS}, for the object $S=\bigoplus_{i\in I}S_{q_i}$ in $\lMod{Q}$ there is a short exact sequence 
    \begin{equation}\label{eq:f-ses}
    \mathfrak{f}\colon\quad0\longrightarrow \mathbb{K}(S)\longrightarrow\mathbb{F}(S)\longrightarrow S\longrightarrow0\,,
    \end{equation}
    where $\mathbb{K}(S)=\bigoplus_{i\in I}\mathbb{K}(S_{q_i})$ and $\mathbb{F}(S)=\bigoplus_{i\in I}\mathbb{F}(S_{q_i})$. By the dual argument of the proof Theorem \ref{thm: t-structure}, one has that $\supp{\mathbb{K}(S)}$ is contained in $\bigcup_{i\in I}\mathsf{N}_{+}(q_i)$. In particular, since $h_0(q_i)=n$, one has that $\supp{\mathbb{K}(S)}$ is in fact contained in $\mathcal{L}_0^{<n}$, since $\supp{\mathbb{K}(S)}\subseteq\mathsf{N}_{+}(q)\setminus\bigcup_{i\in I}\{q_i\}$. This follows from the description of the functors $S_{q_i}$ and $\mathbb{F}(S_{q_i})$ in \cite{HJ-JLMS} and more concretely, the short exact sequence \eqref{f-ses} for each summand is of the form
    \[
    0\to \mathfrak{r}(q_i,-)\to Q(q_i,-)\to Q(q_i,-)/\mathfrak{r}(q_i,-)\to0,\quad i\in I
    \]
    Notice however that since $Q$ is acyclic by the Blanket Assumption in Section \ref{sec:t-structures}, $\mathfrak{r}(q_i,q_i)=0$. Thus, $\mathbb{K}(S_{q_i})(q_i)=0$ and $\supp{\mathbb{K}(S_{q_i})}\subseteq\mathsf{N}_{+}(q_i)\setminus\{q_i\}$ for every $i\in I$. Denote for brevity by $K_i$ the kernel $\mathbb{K}(S_{q_i})$. Notice that the support of $K_i$ is finite and $K_i$ takes values in finitely presented $\Bbbk$-vector spaces. Thus it follows by \prpref{filtration} that there exists a \emph{finite} filtration 
     \begin{equation*}
   0 = K_i^0 \rightarrowtail K_i^1 \rightarrowtail K_i^2 \rightarrowtail \cdots  \rightarrowtail K_i^r = K_i
  \end{equation*}
  for some non-negative integer $r\ge0$, such that for every $j\le r$ there is a short exact sequence
  \[
  0\longrightarrow K_i^j\longrightarrow K_i^{j+1}\longrightarrow S_{q_j}\longrightarrow0\,
  \]
  for some object $q_{j}\in\supp{K}\setminus\supp{K_i^j}.$ In particular, starting from the short exact sequence
  \[
  0\longrightarrow K_i^0\longrightarrow K_i^{1}\longrightarrow S_{q_0}\longrightarrow0\,
  \]
  and proceeding inductively, one can show that the images of each $K_i^j$ lie in $\operatorname{Loc}(\psi(T))$ and so does $K_i$ since it is a finite extension of $K_i^j\,$'s, and thus also $K=\bigoplus_{i\in I}K_i$ lies in $\operatorname{Loc}(\psi(T))$. Applying $\psi$ to the sequence $\mathfrak{f}$ yields a triangle in $\QSD{Q}{\Bbbk}$, such that $\mathbb{K}(S)$ and $\mathbb{F}(S)$ (which is isomorphic to zero in the triangulated category) are in $\operatorname{Loc}(\psi(T))$, and thus so is $S$. Since $\operatorname{Loc}(\psi(T))$ is closed under summands, this proves that every stalk $S_q$ for $q\in\mathcal{L}$ such that $h_0(q)\le N-1$ lies in $\operatorname{Loc}(\psi(T))$. We denote by $\mathcal{L}_0^{\le N-1}$ the set of all such vertices.\smallbreak

  Define now another length function $h_1\colon \mathcal{L}\to\mathbb{Z}_{\ge0}$ by \[
    h_1(q)=\operatorname{max}\{n\ge0\,|\, q=q_0\to q_1\to q_2\to\ldots\to q_n,\text{ with }q_n\in\mathcal{L}_0^{\le N-1}\}
    \]
    where $q=q_0\to q_1\to q_2\to\ldots\to q_n$ is a finite chain of non-zero pseudo-radical morphisms in $\mathcal{L}$.\smallbreak
    
    Notice that, yet again $h_1$ takes values in the finite set $\{0,1,\ldots,N-1\}$, and moreover, for every $q$ in $\mathcal{L}_0^{\le N-1}$, the length $h_1(q)=0$. Applying the same arguments as above one can show that every stalk $S_q$ for $q\in\mathcal{L}$ such that $h_1(q)\le N-1$ lies in $\operatorname{Loc}(\psi(T))$. We denote by $\mathcal{L}_1^{\le N-1}$ the set of all such vertices. Repeating this procedure, in each step we define a length function $h_\kappa\colon\mathcal{L}\longrightarrow\mathbb{Z}_{\ge0}$ by
    \[
    h_\kappa(q)=\operatorname{max}\{n\ge0\,|\, q=q_0\to q_1\to q_2\to\ldots\to q_n,\text{ with }q_n\in\mathcal{L}_{\kappa-1}^{\le N-1}\}
    \]
    where $q=q_0\to q_1\to q_2\to\ldots\to q_n$ is a finite chain of non-zero pseudo-radical morphisms in $\mathcal{L}$ and
  \[
  \mathcal{L}_{\kappa-1}^{\le N-1}:=\{q\in\mathcal{L}\,|\, h_{\kappa-1}(q)=0\},
  \]
  proving that every stalk of $\mathcal{L}_\kappa^{\le N-1}$ belongs to $\operatorname{Loc}(\psi(T))$. Finally,  notice that since $Q$ and thus also $\mathcal{L}$, are acyclic, locally bounded with a nilpotent pseudo-radical and $\mathcal{L}$ is closed under predecessors (see Definition \ref{dfn:L}(1)), it is straightforward to check that every stalk in $\mathcal{L}$ lies in $\operatorname{Loc}(\psi(T))$.\smallbreak

  \noindent\textbf{\textsf{Stalks in} $\mathcal{R}$:} We already shown that every finitely presented $\Lambda$-module lies in $\operatorname{Loc}(\psi(T))$ and thus so do the images of the indecomposable injective $\Lambda$-modules. Therefore, one can use the dual arguments by working with the injectives and the dual version of \prpref{filtration} (that is, every representation supported in $\mathcal{R}$ is co-filtered by stalks in $\mathcal{R}$), to get the desired result. We point out that, $\operatorname{Loc}(\psi(T))$ is \emph{not} closed under limits however, as in the case of $\mathcal{L}$, the cokernel in the dual of the short exact sequence \eqref{f-ses} is going to admit only a finite filtration. \smallbreak

  We conclude that $\operatorname{Loc}(\psi(T))$ contains a set of compact generators, thus it equals $\QSD{Q}{\Bbbk}$, proving that $\psi(T)$ generates, and thus it is a silting object.
\end{proof}
}

As explained above, every silting object gives rise to a silting t-structure. In particular, given a tilting $\Lambda$-module $T$ and denoting its image under $\psi$ by $\check{T}$, one has a silting t-structure $(\check{T}^{\perp_{>0}},\check{T}^{\perp_{\le0}})$ in $\QSD{Q}{\Bbbk}$ with heart $\mathscr{H}_{\check{T}}$, which by \cite[Prop.~4.3]{psaroudakis2018realisation} admits a projective generator $H^0_{\check{T}}(\check{T})$. Since moreover $\check{T}$ is a compact object, the heart $\mathscr{H}_{\check{T}}$ is equivalent to the module category of the endomorphism ring of $\check{T}$ in $\QSD{Q}{\Bbbk}$, that is
    \[
    \mathscr{H}_{\check{T}}\,\cong\, \Mod\text{-}\operatorname{End}_{\QSD{Q}{\Bbbk}}(\check{T}).
    \]
Furthermore, there exists a functor $\mathbf{D}(\mathscr{H}_{\check{T}})\longrightarrow\QSD{Q}{\Bbbk}$, which is an equivalence if and only if $\check{T}$ is tilting by a result of Virili \cite[Thm.~7.7]{virili2018morita}.\smallbreak

In Section \ref{sec:t-structures}, we constructed a t-structure induced by an admissible partition of $Q$. The following result establishes that these two t-structures in fact coincide for special silting objects. 

\begin{prp}\label{prp:equality_of_tstructures}
    Let $T$ be  a small projective generator of $\Lambda$ and denote by $\check{T}$ the image of $T$ in $\QSD{Q}{\Bbbk}$. Then the silting t-structure $(\check{T}^{\perp_{>0}},\check{T}^{\perp_{\le0}})$ in $\QSD{Q}{\Bbbk}$ induced by $\check{T}$, coincides with the suspended t-structure $\Sigma\,\mathbf{t}=(\underline{\mathcal{X}}_\mathcal{L},\Sigma\underline{\mathcal{Y}_\mathcal{L}})$ and the heart $\mathscr{H}_{\check{T}}$ is equivalent to $\lMod{\Lambda}$.
\end{prp}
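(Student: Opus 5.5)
Since $T$ is a small projective generator of $\Lambda$, it is tilting, so by \thmref{tilting} $\check{T}$ is silting and the pair $(\check{T}^{\perp_{>0}},\check{T}^{\perp_{\le0}})$ is a t-structure. The suspended pair $\Sigma\,\mathbf{t}$ is a t-structure by \thmref{ t-structure}. Both are t-structures on the same triangulated category, and a t-structure is determined by its co-aisle (the aisle is the left Hom-orthogonal). It therefore suffices to prove the single equality $\check{T}^{\perp_{\le 0}}=\Sigma\,\mathfrak{Y}_\mathcal{L}$. The plan is to reduce to the stalks $S_q(\Bbbk)$ with $q\in\partial\mathcal{L}$, and then compare these with the functors $\cH[1]{q}$ that define $\mathfrak{Y}_\mathcal{L}$ in \prpref{image}.

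\textbf{Step 1.} Up to $\operatorname{add}$, $T$ equals $\Lambda=\bigoplus_{q\in\partial\mathcal{L}}P_q$. Each $\psi(P_q)$ has finite support in $\partial\mathcal{L}\subseteq\mathcal{L}$ and finite-dimensional values. By the finite filtration argument used in the proof of \thmref{tilting} (based on \prpref{filtration}), $\psi(P_q)$ is a finite extension of stalks $S_p$ with $p\in\partial\mathcal{L}$. Conversely, $S_p$ for $p\in\partial\mathcal{L}$ is obtained from the $\psi(P_q)$ by finitely many cones, using finite global dimension as in that proof.

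\textbf{Step 2.} Identify $\operatorname{Hom}_{\QSD{Q}{\Bbbk}}(S_q,\Sigma^i Y)$ with $\operatorname{Ext}^i_Q(S_q,Y)$ for cofibrant $Y$, and hence with $\cH[i]{q}(Y)$, via the proof of Thm.~D in \cite{HJ-TAMS} as in \thmref{cotorsion}. Since $\Sigma$ shifts the index of $\cH{q}$, the condition $Y\in\Sigma\mathfrak{Y}_\mathcal{L}$ becomes a vanishing of $\operatorname{Hom}(S_q,\Sigma^{j}Y)$ for $q\in\mathcal{L}$ in a single degree. The description $\mathfrak{X}_\mathcal{L}\supseteq\Sigma\mathfrak{X}_\mathcal{L}$ (cohereditary, \thmref{ t-structure}) then upgrades this to vanishing in all degrees $\le 0$.

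\textbf{Step 3 (main obstacle).} Restrict the test objects from $q\in\mathcal{L}$ to $q\in\partial\mathcal{L}$. For the inclusion $\Sigma\mathfrak{Y}_\mathcal{L}\subseteq\check{T}^{\perp_{\le0}}$ this is immediate, since $\check{T}\in\underline{\mathcal{X}}_\mathcal{L}$ by Step 1 and \prpref{image}.

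For the converse, take $Y\in\check{T}^{\perp_{\le0}}$. Then $Y$ is right orthogonal to the aisle $\check{T}^{\perp_{>0}}$, which is the smallest cocomplete suspended subcategory containing $\check{T}$. I would show that every $S_q$ with $q\in\mathcal{L}$ lies in this suspended closure. To do this, I would rerun the induction on the length functions $h_\kappa$ from the proof of \thmref{tilting}. There the triangles obtained from $0\to\mathbb{K}(S)\to\mathbb{F}(S)\to S\to0$, with $\mathbb{F}(S)\cong0$, give $S\cong\Sigma\mathbb{K}(S)$, and $\mathbb{K}(S)$ is filtered by stalks of lower height. So only positive suspensions and extensions are used, and the induction stays inside the aisle. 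This gives $\underline{\mathcal{X}}_\mathcal{L}\subseteq\check{T}^{\perp_{>0}}$, and with the opposite inclusion the aisles coincide.

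I expect the delicate point to be exactly this bookkeeping of suspension degrees. One has to check that the construction never needs a desuspension, and that the stalks $S_q$, $q\in\mathcal{L}$, generate $\underline{\mathcal{X}}_\mathcal{L}$ under extensions, coproducts and $\Sigma$. The latter would follow from \prpref{filtration} and \corref{filt-description-summand}.

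\textbf{Step 4 (heart).} Once the t-structures agree, the heart is $\mathscr{H}_{\check{T}}\simeq\Mod\text{-}\operatorname{End}(\check{T})$, as recalled above. By \lemref{ homological embedding} with $i=0$, $\operatorname{End}_{\QSD{Q}{\Bbbk}}(\check{T})\cong\operatorname{End}_\Lambda(T)$. Since $T$ is a projective generator, $\operatorname{End}_\Lambda(T)$ is Morita equivalent to $\Lambda$, so the heart is equivalent to $\lMod{\Lambda}$.
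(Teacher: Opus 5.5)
Your proposal is correct and is essentially the paper's argument. As in the paper, $\check T\in\underline{\mathcal X}_{\mathcal L}$ together with closure of the aisle under $\Sigma$ gives $\check T^{\perp_{>0}}\subseteq\underline{\mathcal X}_{\mathcal L}$; the projective-resolution step for boundary stalks and the induction $S_q\cong\Sigma\,\mathbb K(S)$ place every $S_q$ with $q\in\mathcal L$ in $\operatorname{Susp}(\check T)=\check T^{\perp_{>0}}$ using only positive suspensions; and Morita theory identifies the heart.

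One small gain from your co-aisle framing: $\Sigma\mathfrak Y_{\mathcal L}=\{S_q\mid q\in\mathcal L\}^{\perp}$ via $\Hom{\QSD{Q}{\Bbbk}}{S_q}{Y}\cong\cH[1]{q}(\Sigma^{-1}Y)$ (this identification needs $S_q$ cofibrant, not $Y$). So knowing $S_q\in\check T^{\perp_{>0}}$ already gives $\check T^{\perp_{\le 0}}\subseteq\Sigma\mathfrak Y_{\mathcal L}$, and you can drop the transfinite-filtration and homotopy-colimit step that the paper, and the end of your Step 3, rely on.
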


\begin{proof}
 To show that the t-structure $(\check{T}^{\perp_{>0}},\check{T}^{\perp_{\le0}})$ coincides with the t-structure $\Sigma\,\mathbf{t}=(\underline{\mathcal{X}}_\mathcal{L},\Sigma\underline{\mathcal{Y}_\mathcal{L}})$, we prove that the aisles coincide. In particular, notice that since $\check{T}$ is a compact silting object, by \cite[Prop.4.8]{angeleri2019silting} one has that $\check{T}^{\perp_{>0}}=\operatorname{Susp}(\check{T})$, the smallest suspended subcategory of $\QSD{Q}{\Bbbk}$ closed under coproducts and direct summands that contains $\check{T}$. It is worth to remark here that the silting objects considered in \emph{loc. cit.} are \emph{large silting objects}, however, by \cite[Ex.~4.2]{psaroudakis2018realisation} every silting object in the sense of Aihara-Iyama (which are the silting objects that we consider here) can also be considered large silting objects. We thus show that $\underline{\mathcal{X}}_\mathcal{L}=\operatorname{Susp}(\check{T})$.\smallbreak
 
 One inclusion is clear, since $T$ lies in $\mathcal{X}_\mathcal{L}$ and $\underline{\mathcal{X}}_\mathcal{L}$ is an aisle (thus closed under positive suspensions and coproducts), one has $\operatorname{Susp}(\check{T})\subseteq\underline{\mathcal{X}}_\mathcal{L}$.
 For the converse inclusion, let $X$ lie in $\underline{\mathcal{X}}_\mathcal{L}$. By Proposition \ref{prp:filtration} $X$ is filtered by stalk objects in $\mathcal{L}$, thus it suffices to show that every stalk object in $\mathcal{L}$ belongs in $\operatorname{Susp}(\check{T})$, since the latter as an aisle is closed under homotopy colimts. However, this is already shown in the proof of \thmref{tilting}. More concretely, every stalk object in the boundary $\partial\mathcal{L}$ automatically lies in $\operatorname{Susp}(\check{T})$ since they can be regarded as simple $\Lambda$-modules and $T$ is a projective generator of $\Lambda$. Then, assuming that every stalk in $\mathcal{L}^{<n}_0$ lies in $\operatorname{Susp}(\check{T})$ one can show that the kernel $K$ from \eqref{f-ses} is a finite extension of objects in $\operatorname{Susp}(\check{T})$ and hence it belongs there as well. Consequently, following similar arguments as in the proof of \thmref{tilting}, one has inductively that every stalk in $\mathcal{L}$ is given by $S_q\cong\Sigma K$ and hence it lies in $\operatorname{Susp}(\check{T})$. We conclude that $\underline{\mathcal{X}}_\mathcal{L}\subseteq\operatorname{Susp}(\check{T})$.  \smallbreak

 Finally, since $T$ is a projective generator of $\Lambda$, that is, via Morita's Theorem \cite{KMr58} one computes that 
\[
    \mathscr{H}_{\check{T}}\,\cong\, \Mod\text{-}\operatorname{End}_{\QSD{Q}{\Bbbk}}(\check{T})^{\text{op}}\,\cong\,\lMod{\Lambda}.
    \]
\end{proof}

\begin{rmk}\label{cor:tilting_change_generators}
    Throughout this section we were searching for silting objects in the $Q$-shaped derived category of the field $\Bbbk$. However our results can be extended for the $Q$-shaped derived category of any $\Bbbk$-algebra. Consider $\mathcal{T}$ to be a silting subcategory in $\QSD{Q}{\Bbbk}$, $A$ be a $\Bbbk$-algebra and $\mathcal{P}_\mathcal{T}$ be the full dg subcategory spanned by the objects in $\mathcal{T}$. Then the $Q$-shaped derived category $\QSD{Q}{A}$ of $A$, admits a silting subcategory that is isomorphic to $A\otimes H^{\le0}(\mathcal{T})$ as graded categories. In the special case that $\mathcal{T}$ is a tilting subcategory, $\QSD{Q}{A}$ admits a tilting subcategory that is isomorphic to $A\otimes \mathcal{T}$ as $\Bbbk$-linear categories and moreover, there exists a triangle equivalence
    \[
    \QSD{Q}{A}\stackrel{\cong}{\longrightarrow}\mathbf{D}(A\otimes \mathcal{T}).
    \]
 The proof of this claim follows by Proposition \ref{prp:equality_of_tstructures} and \cite[Cor.~3.30~\&~Cor.~3.31]{jasso2025qshaped}. The only point that we would like to stress is that in our case we can consider any $\Bbbk$-algebra $A$, without assuming flatness over $\Bbbk$, in contrast to the Setup in Jasso's work, since this assumption is satisfied automatically by working over a field.
\end{rmk}

\section{Examples of tilting objects in \texorpdfstring{$Q$}{Q}-shaped derived categories}\label{sec:examples}

The results of \prpref{equality_of_tstructures} and Corollary \ref{cor:tilting_change_generators} recover certain already known instances of tilting in $Q$-shaped derived categories. The first one is encountered in \cite[Prop.~4.11]{MR3742439}, where Iyama, Kato and Miyachi proved that the derived category of $N$-complexes over a ring $R$ is equivalent to the derived category of the path algebra $RA_{N-1}$. In their proof they rely heavily on the existence of a tilting subcategory which they define as the subcategory of sequences of $N-2$-morphisms (see \cite[Def.~4.1]{MR3742439}). In the following Example \ref{ex: n-complexes} we show that this subcategory coincides with $\lMod{\partial\mathcal{L}, A}$. 

\begin{exa}\label{ex: n-complexes}
    Consider $Q$ to be the following quiver 
    \[\begin{tikzcd}
	{Q=} & \cdots & \bullet & \bullet & \bullet & \bullet & {\cdots,} & {\delta^N=0}
	\arrow[from=1-2, to=1-3]
	\arrow["\delta", from=1-3, to=1-4]
	\arrow["\delta", from=1-4, to=1-5]
	\arrow["\delta", from=1-5, to=1-6]
	\arrow[from=1-6, to=1-7]
\end{tikzcd}\]
where the vertices are indexed by the integers and the composition of $N$ consecutive arrows is zero, for some $N\ge2$. It is proven in \cite{HJ-JLMS} that the $Q$-shaped derived category $\QSD{Q}{A}$ is equivalent to the derived category of $N$-complexes $\mathbf{D}_N(A)$ of $A$-modules. In this setup, consider the admissible partition $\mathcal{L},\mathcal{R}$ of $Q$ given by
\[
\mathcal{L}\,=\,\{q\in \mathbb{Z}\,|\, q\le0\} \quad\text{and}\quad \mathcal{R}\,=\,\{q\in \mathbb{Z}\,|\, q>0\}.
\]
The boundary $\partial\mathcal{L}$ of $\mathcal{L}$ is highlighted in red in the following graph:
\[\begin{tikzcd}[column sep=2.2em]
	\cdots & {\stackrel{-N}{\circ}} & {\stackrel{-(N-1)}{\circ}} & {\stackrel{-(N-2)}{{\color{red}\bullet}}} & \cdots & {\stackrel{-2}{{\color{red}\bullet}}} & {\stackrel{-1}{{\color{red}\bullet}}} & {\stackrel{0}{{\color{red}\bullet}}} & {\stackrel{1}{\circ}} & \cdots
	\arrow[from=1-1, to=1-2]
	\arrow[from=1-2, to=1-3]
	\arrow[from=1-3, to=1-4]
	\arrow[color={rgb,255:red,255;green,0;blue,0}, from=1-4, to=1-5]
	\arrow[color={rgb,255:red,255;green,0;blue,0}, from=1-5, to=1-6]
	\arrow[color={rgb,255:red,255;green,0;blue,0}, from=1-6, to=1-7]
	\arrow[color={rgb,255:red,255;green,0;blue,0}, from=1-7, to=1-8]
	\arrow[from=1-8, to=1-9]
	\arrow[from=1-9, to=1-10]
\end{tikzcd}\]

It is straightforward to check that $\lMod{\partial\mathcal{L}}$ coincides with the category of representations of the Dynkin quiver $A_{N-1}$. In this case, the tilting module of $\Bbbk A_{N-1}$ is given by $T=\bigoplus_{i=1}^{N-1}P_i$, the direct sum of the indecomposable projective representations of $A_{N-1}$. The image of this object in $\QSD{Q}{\Bbbk}$ is a silting object in $\QSD{Q}{\Bbbk}$ by \ref{thm:tilting} and one can prove that it spans precisely the same subcategory discussed in \cite[Prop.~4.7]{MR3742439}, thus we conclude that it is in fact a tilting object. Finally, one has a triangle equivalence
\[
\QSD{Q}{\Bbbk}\stackrel{\cong}{\longrightarrow}\mathbf{D}(T_{N-1}(\Bbbk)),
\]
which by Remark \ref{cor:tilting_change_generators} can by extended to an equivalence between $\QSD{Q}{A}$ and $\mathbf{D}(T_{N-1}(A))$,  where $\mathbf{D}(T_{N-1}(A))$ is the derived category of upper triangular $(N-1)\times(N-1)$ matrices with entries in $\lMod{A}$, or equivalently the derived category of the category $\operatorname{Rep}(A_{N-1},\lMod{A})$ of $\lMod{A}$-valued representations of $A_{N-1}$ (see also \cite[Prop.~4.11]{MR3742439}).\smallbreak

In the special case that $N=2$ then $\mathbf{Ch}_N(A)$ is simply the (classical) category of complexes of $A$-modules $\mathbf{Ch}(A)$. The above construction recovers the standard t-structure in the derived category $\mathbf{D}(A)$ of $A$.
\end{exa}

The second instance of tilting in $Q$-shaped derived categories is encountered in \cite{gratz2024tilting}. Here, Gratz, Holm, J{\o}rgensen and Stevenson proved that for certain categories $Q$ and certain algebras $A$, one can find a tilting object $T$ and a triangle equivalence $\mathbf{D}_Q(A)\longrightarrow\mathbf{D}(\operatorname{End}(T)\otimes A)$, where $\mathbf{D}(\operatorname{End}(T)\otimes A)$ is the derived category of the tensor product of the endomorphism ring of $T$ with $A$. This result was later generalised in the context of dg-categories by Jasso in \cite{jasso2025qshaped}. This recovers well known results in the literature, such as equivalences between derived categories of $N$-complexes and derived categories of upper triangular $(N-1)\times(N-1)$ matrices as established in \cite{MR3742439} (see also Example \ref{ex: n-complexes}), a connection to Beilinson's theorem (cf. \cite{beilinson1978coherent}) for the derived category of coherent sheaves of $\mathbb{P}^n$, see \cite[Sec.~4.2]{gratz2024tilting}, and others.

\begin{exa}\label{ex: tilt a3}
    Consider $Q$ to be given by the repetitive quiver of $A_3$ modulo the mesh relations, or schematically by the following graph
    \[\begin{tikzcd}[column sep=0.7em, row sep=0.7em]
	& {(-3,3)} && {(-2,3)} && {(-1,3)} && {(0,3)} && {(1,3)} \\
	\cdots && {(-2,2)} && {(-1,2)} && {(0,2)} && {(1,2)} && \cdots \\
	& {(-2,1)} && {(-1,1)} && {(0,1)} && {(1,1)} && {(2,1)}
	\arrow[from=1-2, to=2-3]
	\arrow[from=1-4, to=2-5]
	\arrow[from=1-6, to=2-7]
	\arrow[from=1-8, to=2-9]
	\arrow[from=1-10, to=2-11]
	\arrow[from=2-1, to=1-2]
	\arrow[from=2-1, to=3-2]
	\arrow[from=2-3, to=1-4]
	\arrow[from=2-3, to=3-4]
	\arrow[from=2-5, to=1-6]
	\arrow[from=2-5, to=3-6]
	\arrow[from=2-7, to=1-8]
	\arrow[from=2-7, to=3-8]
	\arrow[from=2-9, to=1-10]
	\arrow[from=2-9, to=3-10]
	\arrow[from=3-2, to=2-3]
	\arrow[from=3-4, to=2-5]
	\arrow[from=3-6, to=2-7]
	\arrow[from=3-8, to=2-9]
	\arrow[from=3-10, to=2-11]
\end{tikzcd}.\]

We define an admissible partition $\mathcal{L},\mathcal{R}$ of $Q$ given by
\[
\mathcal{L}\,=\,\{(i,j)\in Q\,|\, i<0\} \quad\text{and}\quad \mathcal{R}\,=\,\{(i,j)\in Q\,|\, i\ge0\}.
\]
In the following graph we distinguish $\mathcal{L}$ within the striped area, $\mathcal{R}$ within the dotted area, and we highlight the boundary $\partial\mathcal{L}$ of $\mathcal{L}$ with red.\smallbreak

\begin{center}
\begin{tikzpicture}[
  x=1.2cm, y=1.2cm,
  every node/.style={circle,inner sep=1.2pt,font=\small},
  arr/.style={->,line width=0.4pt},
]

\usetikzlibrary{patterns}

% --- striped region (x<0) ---
\fill[pattern=north west lines,pattern color=black!40,opacity=0.6]
  (-4.6,0.6) -- (-0.2,0.6) -- (2,3.3) -- (-4.6,3.3) -- cycle;

% --- dotted region (x>=0) ---
\fill[pattern=dots,pattern color=black!40,opacity=0.6]
  (-0.2,0.6) -- (6.6, 0.6) -- (6.6,3.3) -- (2,3.3) -- cycle;

% --- nodes ---
\node (-4,3) at (-4,3) {$\,$};
\node (-4,2) at (-4,2) {$\mathbf{\cdots}$};
\node (-4,1) at (-4,1) {$\mathbf{\mathcal{L}}$};

\node (-3,3) at (-3,3) {$(-3,3)$};
\node (-3,2) at (-3,2) {$\,$};
\node (-3,1) at (-3,1) {$(-2,1)$};

\node (-2,3) at (-2,3) {$\,$};
\node (-2,2) at (-2,2) {$(-2,2)$};
\node (-2,1) at (-2,1) {$\,$};

\node (-1,3) at (-1,3) {{\color{red}$(-2,3)$}};
\node (-1,2) at (-1,2) {$\,$};
\node (-1,1) at (-1,1) {$(-1,1)$};

\node (0,3) at (0,3) {$\,$};
\node (0,2) at (0,2) {{\color{red}$(-1,2)$}};
\node (0,1) at (0,1) {$\,$};

\node (1,3) at (1,3) {{\color{red}$(-1,3)$}};
\node (1,2) at (1,2) {$\,$};
\node (1,1) at (1,1) {$(0,1)$};

\node (2,3) at (2,3) {$\,$};
\node (2,2) at (2,2) {$(0,2)$};
\node (2,1) at (2,1) {$\,$};

\node (3,3) at (3,3) {$(0,3)$};
\node (3,2) at (3,2) {$\,$};
\node (3,1) at (3,1) {$(1,1)$};

\node (4,3) at (4,3) {$\,$};
\node (4,2) at (4,2) {$(1,2)$};
\node (4,1) at (4,1) {$\,$};

\node (5,3) at (5,3) {$(1,3)$};
\node (5,2) at (5,2) {$\,$};
\node (5,1) at (5,1) {$(2,1)$};

\node (6,3) at (6,3) {$\mathbf{\mathcal{R}}$};
\node (6,2) at (6,2) {$\mathbf{\cdots}$};
\node (6,1) at (6,1) {$\,$};

% --- arrows (same as tikzcd) ---

\draw[shorten <=10pt,shorten >=10pt,->] (-4,2) -- (-3,3);
\draw[shorten <=10pt,shorten >=10pt,->] (-4,2) -- (-3,1);

\draw[shorten <=10pt,shorten >=10pt,->] (-3,3) -- (-2,2);
\draw[shorten <=10pt,shorten >=10pt,->] (-3,1) -- (-2,2);

\draw[shorten <=10pt,shorten >=10pt,->] (-2,2) -- (-1,3);
\draw[shorten <=10pt,shorten >=10pt,->] (-2,2) -- (-1,1);

\draw[shorten <=10pt,shorten >=10pt,->,draw=red] (-1,3) -- (0,2);
\draw[shorten <=10pt,shorten >=10pt,->] (-1,1) -- (0,2);

\draw[shorten <=10pt,shorten >=10pt,->,draw=red] (0,2) -- (1,3);
\draw[shorten <=10pt,shorten >=10pt,->] (0,2) -- (1,1);

\draw[shorten <=10pt,shorten >=10pt,->] (1,3) -- (2,2);
\draw[shorten <=10pt,shorten >=10pt,->] (1,1) -- (2,2);

\draw[shorten <=10pt,shorten >=10pt,->] (2,2) -- (3,3);
\draw[shorten <=10pt,shorten >=10pt,->] (2,2) -- (3,1);

\draw[shorten <=10pt,shorten >=10pt,->] (3,3) -- (4,2);
\draw[shorten <=10pt,shorten >=10pt,->] (3,1) -- (4,2);

\draw[shorten <=10pt,shorten >=10pt,->] (4,2) -- (5,3);
\draw[shorten <=10pt,shorten >=10pt,->] (4,2) -- (5,1);

\draw[shorten <=10pt,shorten >=10pt,->] (5,3) -- (6,2);
\draw[shorten <=10pt,shorten >=10pt,->] (5,1) -- (6,2);

\end{tikzpicture}
\end{center}

Here it is straight forward to check that $\lMod{\partial\mathcal{L}}$ coincides with the category of representations of the quiver \[\begin{tikzcd}
	{\Delta=} & \bullet & \bullet & {\bullet,} & {ba=0}
	\arrow["a"', from=1-2, to=1-3]
	\arrow["b"', from=1-3, to=1-4]
\end{tikzcd}.\]
We choose as a tilting module of $\Bbbk\Delta$ the direct sum of the indecomposable projective representations, that is:
\[\begin{tikzcd}[column sep=1.2em, row sep=1em]
	{T=} & {\Big(0} & 0 & {\Bbbk\Big)} & \oplus & {\Big(0} & \Bbbk & {\Bbbk\Big)} & \oplus & {\Big(\Bbbk} & \Bbbk & {0\Big)}
	\arrow[from=1-2, to=1-3]
	\arrow[from=1-3, to=1-4]
	\arrow[from=1-6, to=1-7]
	\arrow["1", from=1-7, to=1-8]
	\arrow["1", from=1-10, to=1-11]
	\arrow[from=1-11, to=1-12]
\end{tikzcd}.\]
In this case one can show that the image of $T$ in $\QSD{Q}{\Bbbk}$ (which we know by Theorem \ref{thm:tilting} that is a silting object) is in fact a tilting object by explicitly computing $$\Hom{\QSD{Q}{\Bbbk}}{\psi(T)}{\Sigma^{-n}\psi(T)}$$ for all $n>0$. In particular, considering the Frobenius model of the category of cofibrant objects whose stable category is equivalent to $\QSD{Q}{\Bbbk}$, and under the identification $\Sigma^{-1}\,\cong\,\Omega$, one can even prove that there are no morphisms from $T$ to $\Omega T$ even in $\lMod{Q}$ and hence also from $T$ to $\Omega^nT$ for every $n>0$. Thus, $\psi(T)$ is a tilting object in $\QSD{Q}{\Bbbk}$ and there is an equivalence $\QSD{Q}{\Bbbk}\,\cong\, \mathbf{D}(\operatorname{Rep}(\Delta))\,\cong\,\mathbf{D}(\Bbbk A_3)$ of triangulated categories, where the second equivalence is a well-known instance of derived Morita equivalence. These computations are straightforward and left to the reader. Finally, for an arbitrary $\Bbbk$-algebra $A$, by Remark \ref{cor:tilting_change_generators} one has a triangle equivalence,
\[
\QSD{Q}{A}\stackrel{\cong}{\longrightarrow}\mathbf{D}(\operatorname{Rep}(\Delta,\lMod{A})\stackrel{\cong}{\longrightarrow}\mathbf{D}(A\otimes\Bbbk A_3),
\]
which recovers the already known result in \cite{gratz2024tilting}.
\end{exa}

So far we showed that for any $Q$ which admits an admissible partition in the sense of Definition \ref{dfn:L}, one can construct a silting (in fact in some case even a tilting) object in $\QSD{Q}{A}$ based completely on the combinatorics of $\mathcal{L}$. Consequently, in these cases, the $Q$-shaped derived category is equivalent to the derived category of some ring. Combining our results, with some already known instances of tilting in $Q$-shaped derived categories in the literature (cf. \cite{MR3742439,gratz2024tilting,jasso2025qshaped}), naturally arises the question, whether every $Q$-shaped derived category admits silting objects. As we can see in the following subsection, the answer to this question is negative.

\subsection{Non-Example: The cyclic quiver}
Consider the cyclic quiver $Q^m$ given by a set of $m$ vertices and $m$-arrows between them such as any two consecutive arrows compose to zero (see Figure \ref{fig:cyclic_quiver}). By abuse of notation we denote by $Q^m$ also the path category modulo the ideal of relations in $Q^m$. 

\begin{figure}[h]
%\begin{center}
\begin{tikzpicture}[scale=2]
  \node at (0:1.0){$0$};
  \draw[->] (10:1.0) arc (10:46:1.0);
  \node at (60:1.0){$m-1$};
  \draw[->] (74:1.0) arc (74:102:1.0);
  \node at (120:1.0){$m-2$};
  \draw[->] (133:1.0) arc (133:168:1.0);
  \node at (175:1.0){$\cdot$};
  \node at (180:1.0){$\cdot$};
  \node at (185:1.0){$\cdot$};  
  \draw[->] (192:1.0) arc (192:232:1.0);
  \node at (240:1.0){$2$};
  \draw[->] (248:1.0) arc (248:293:1.0);
  \node at (300:1.0){$1$};
  \draw[->] (308:1.0) arc (308:352:1.0);
\end{tikzpicture}
%\end{center}
\caption{Cyclic quiver with $m$ vertices and relations: every two consecutive arrows compose to zero.}
\label{fig:cyclic_quiver}
\end{figure}
%\newpage
\begin{prp}\label{prp: no t-structures}
    Let $Q^m$ be the cyclic quiver in Figure \ref{fig:cyclic_quiver}. Then, the $Q^m$-shaped category $\mathbf{D}_{Q^m}(\Bbbk)$ admits only stable t-structures.
\end{prp}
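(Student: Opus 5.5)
The plan is to identify $\mathbf{D}_{Q^m}(\Bbbk)$ with the stable module category of a very small self-injective algebra. Then I show that the suspension $\Sigma$ has finite order on objects, which forces every aisle to be closed under $\Sigma^{-1}$. A t-structure whose aisle is closed under $\Sigma^{-1}$ is stable, which is exactly what is claimed.

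\textbf{Step 1: identify the derived category.} Since $Q^m$ has finitely many objects, $\lMod{Q^m}$ is the module category of the finite-dimensional algebra
\[
\Gamma=\Bbbk Q^m/J^2,
\]
the radical-square-zero cyclic Nakayama algebra. This algebra is self-injective: each indecomposable projective $e_i\Gamma$ is uniserial of length two with socle a simple, and it is also injective. Hence $\lMod{\Gamma}$ is Frobenius. The class ${}_Q\mathcal{L}$ of modules of finite projective dimension then consists of the projective modules. So $\mathscr{E}=\lPrj{Q^m,\Bbbk}$ and ${}^{\perp_1}\mathscr{E}=\lMod{Q^m}$. By the description of $\QSD{Q}{A}$ as a stable category recalled in Section~\ref{sec:preliminaries}, we get
\[
\QSD{Q^m}{\Bbbk}\simeq \StMod\Gamma,\qquad \Sigma=\Omega^{-1}.
\]

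\textbf{Step 2: describe all objects.} $\Gamma$ has Loewy length two and is representation-finite; indeed it is pure-semisimple, being Nakayama. Therefore every $\Gamma$-module, not only every finitely generated one, is a direct sum of indecomposable projectives and simples $S_0,\dots,S_{m-1}$. Concretely, $\operatorname{rad}M$ is semisimple and $M$ is a sum of uniserials. In $\StMod\Gamma$ every object is thus isomorphic to a coproduct $\bigoplus_i S_i^{(I_i)}$.

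The short exact sequences $0\to S_{i\pm1}\to P_i\to S_i\to0$ give $\Omega S_i\cong S_{i\pm1}$, with the sign fixed by the arrow orientation. So $\Omega$, and hence $\Sigma$, permutes $\{S_0,\dots,S_{m-1}\}$ cyclically with period $m$. Since $\Sigma$ commutes with coproducts, $\Sigma^m X\cong X$ for every object $X$.

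\textbf{Step 3: conclude.} Let $(\mathcal{X},\mathcal{Y})$ be a t-structure. For $X\in\mathcal{X}$ we have
\[
\Sigma^{-1}X\cong\Sigma^{m-1}X\in\mathcal{X},
\]
because $\mathcal{X}$ is closed under positive suspensions. Hence $\mathcal{X}$ is a triangulated (suspended and cosuspended) subcategory, i.e.\ the t-structure is stable. The co-aisle $\mathcal{Y}=\mathcal{X}^{\perp}$ is then also closed under $\Sigma^{\pm1}$.

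If one wants more, aisles are closed under coproducts and summands, so $\mathcal{X}=\operatorname{Add}\{S_i\mid i\in I\}$ with $I$ closed under $i\mapsto i+1$. Then $I=\varnothing$ or $I=\{0,\dots,m-1\}$, so the only t-structures are the trivial ones $(0,\mathbf{D})$ and $(\mathbf{D},0)$. This matches the Linckelmann phenomenon mentioned in the introduction.

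\textbf{Main obstacle.} The delicate step is Step~1: checking that the Holm--Jørgensen setup applies to this non-acyclic $Q^m$, namely that the Serre functor, the pseudo-radical and the strong retraction property hold. One also has to verify carefully that the Frobenius model ${}^{\perp_1}\mathscr{E}$ really coincides with all of $\lMod{Q^m}$, with projectives as the only trivial objects. I would first verify this by computing $\operatorname{pd}S_i=\infty$ from the periodic resolution, so that only projectives have finite projective dimension. The decomposition of infinitely generated modules in Step~2 I would quote from the standard structure theory of Nakayama (pure-semisimple) algebras.
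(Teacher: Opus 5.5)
Your proposal is correct, but it proves the statement by a different mechanism than the paper.

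\textbf{The paper's route.} The paper stays in the Frobenius model and argues in three steps.
\begin{itemize}
\item It shows that a cotorsion pair in ${}_{Q^m}\fpmod$ whose left class is closed under co-syzygies must be either the projective or the injective cotorsion pair, because all simples lie on a single totally acyclic complex.
\item It then invokes the Saor{\'{\i}}n--{\v{S}}t'ov{\'\i}{\v{c}}ek correspondence to conclude that ${}_{Q^m}\stmod$ has only trivial t-structures.
\item Finally, it uses Ringel--Tachikawa to assert that complete cotorsion pairs in $\lMod{Q^m}$ are of finite type, so they are controlled by ${}_{Q^m}\fpmod$.
\end{itemize}

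\textbf{Your route.} You work directly with aisles in $\StMod\Gamma$. You use the same combinatorial input (the cyclic action of $\Omega$ on the simples) and the same Ringel--Tachikawa decomposition. From these you get $\Sigma^m X\cong X$ objectwise. Closure of aisles under positive suspensions and isomorphisms then gives stability. Closure under coproducts and summands then gives triviality.

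\textbf{What each approach buys.} Your argument is shorter and self-contained. It bypasses both the correspondence theorem and the rather quickly asserted finite-type reduction. It also isolates the conceptual reason, namely periodicity, which the paper only mentions in the remark after the proposition. You correctly use only the objectwise isomorphism $\Sigma^mX\cong X$; no natural isomorphism $\Sigma^m\cong\operatorname{id}$ is needed. In exchange, the paper's route tells you explicitly which cotorsion pairs in the model could correspond to t-structures, which fits the machinery of the rest of the paper.

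\textbf{Your flagged obstacle.} The identification ${}^{\perp_1}\mathscr{E}=\lMod{Q^m}$ is cleanest via the quasi-Frobenius property. Over a self-injective algebra every module of finite projective dimension, finitely generated or not, is projective, because the last projective term of a finite resolution is injective and the sequence splits off. You do not need the route through $\operatorname{pd}S_i=\infty$. The paper itself takes the Holm--J\o rgensen setup for $Q^m$ for granted; this is the $m$-periodic-complex case.
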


\begin{proof}
    We begin by observing that in this case the bound quiver algebra $\Bbbk Q^m/I$, where $I$ is the ideal of relations described in Figure \ref{fig:cyclic_quiver}, is a representation finite self-injective artin algebra. Consider also the subcategory ${}_{Q^m}\fpmod:=\operatorname{Fun}(Q^m,{}_\Bbbk\fpmod)$ of $\lMod{Q^m}$ consisting of all the $\Bbbk$-linear functors from $Q^m$ that take values in the finitely presented $\Bbbk$-modules. Notice that this subcategory is precisely the subcategory of finitely presented objects of $\lMod{Q^m}$. Moreover, ${}_{Q^m}\fpmod$ is a Krull-Shcmidt category whose only non projective-injective indecomposable modules are the simples. Finally, the $Q^m$-shaped derived category in this case, is equivalent to the stable module category ${}_{Q^m}\StMod$.\smallbreak

    Assume now that $(\mathcal{C},\mathcal{D})$ is a cotorsion pair in ${}_{Q^m}\fpmod$ such that $\mathcal{C}$ is closed under co-syzygies. Then $(\mathcal{C},\mathcal{D})$ is either the projective cotorsion pair $({}_{Q^m}\operatorname{prj},{}_{Q^m}\fpmod)$ or the injective cotorsion pair $({}_{Q^m}\fpmod, {}_{Q^m}\operatorname{prj})$. This follows from the fact that if $\mathcal{C}$ contains at least one simple representation, then it contains all of them since they all appear in the same totally acyclic projective resolution
    %\small
\[\begin{tikzcd}
	\cdots & {P_{m-1}} && {P_0} && {P_1} && {P_2} & \cdots \\
	&& {S_{m-1}} && {S_0} && {S_1}
	\arrow[from=1-1, to=1-2]
	\arrow[from=1-2, to=1-4]
	\arrow[two heads, from=1-2, to=2-3]
	\arrow[from=1-4, to=1-6]
	\arrow[two heads, from=1-4, to=2-5]
	\arrow[from=1-6, to=1-8]
	\arrow[two heads, from=1-6, to=2-7]
	\arrow[from=1-8, to=1-9]
	\arrow[tail, from=2-3, to=1-4]
	\arrow[tail, from=2-5, to=1-6]
	\arrow[tail, from=2-7, to=1-8]
\end{tikzcd}\]
%\normalsize
\smallbreak

Therefore, from \cite{SaorinStovicek} ${}_{Q^m}\stmod$ has no non-trivial t-structures. By a classical result in representation theory of artin algebras by Ringel and Tachikawa \cite{ringel1975qf}, since ${}_{Q^m}\fpmod$ is representation finite, every representation in $\lMod{Q^m}$ can be written as a direct sum of finitely presented indecomposable $Q^m$-representations. Thus, every complete cotorsion pair in $\lMod{Q^m}$ is of finite type and in particular, it restrict to a complete cotorsion pair in ${}_{Q^m}\fpmod$. It follows that the cotorsion pairs in $\lMod{Q^m}$ are controlled by the cotorsion pairs in the category of finitely presented modules and since in ${}_{Q^m}\fpmod$ the only cotorsion pairs that lead to t-structures via Saor{\'\i}n-{\v{S}}t'ov{\'\i}{\v{c}}ek's correspondence Theorem \ref{thm: soarin-stovicek correspondence} are the trivial ones, we conclude that $\QSD{Q^m}{\Bbbk}$ admits only stable t-structures, whose hearts are trivial and thus  $\QSD{Q^m}{\Bbbk}$ does not have any silting objects.
\end{proof}

\begin{rmk}
    The result of Proposition \ref{prp: no t-structures} is not surprising since the triangulated category ${}_{Q^m}\StMod$ is $m$-periodic, in the sense that $\Sigma^m\cong\operatorname{id}$. It is known that periodic triangulated categories contain no tilting objects. In the special case where we consider the path algebra of the quiver $Q^m$ modulo $\operatorname{rad}^m$, it is shown in \cite[Ex.~5.8]{saito2023tilting} that $\QSD{Q^m}{\Bbbk}$ is triangle equivalent to the 2-periodic derived category $\mathbf{D}^{2\text{-per}}(\Bbbk A_{m-1})$ of the path algebra of the quiver $A_{m-1}$.
\end{rmk}

\bibliographystyle{alpha}
\bibliography{refs}

\end{document}